\long\def\symbolfootnote[#1]#2{\begingroup%
\def\thefootnote{\fnsymbol{footnote}}\footnote[#1]{#2}\endgroup}
\qed\vspace{5pt}}
\newtheoremstyle{lause}% name
{5pt}% space above
{5pt}% space below
{\slshape}% body font
{\parindent}% indent amount (empty = no indent)
{\bfseries}% theorem head font
{.}% punctuation after theorem head
{.5em}% space after theorem head
{}% theorem head spec (can be left empty, meaning 'normal')
\theoremstyle{lause}
\newtheoremstyle{maaritelma}% name
{5pt}% space above
{5pt}% space below
{\rmfamily}% body font
{\parindent}% indent amount (empty = no indent)
{\bfseries}% theorem head font
{.}% punctuation after theorem head
{.5em}% space after theorem head
{}% theorem head spec (can be left empty, meaning 'normal')
\theoremstyle{maaritelma}
\newtheoremstyle{lause}% name
{5pt}% space above
{5pt}% space below
{\slshape}% body font
{\parindent}% indent amount (empty = no indent)
{\bfseries}% theorem head font
{.}% punctuation after theorem head
{.5em}% space after theorem head
{}% theorem head spec (can be left empty, meaning 'normal')
\theoremstyle{lause}
\newtheorem{theorem}{Theorem}[section]
\newtheorem{lemma}[theorem]{Lemma}
\newtheorem{proposition}[theorem]{Proposition}
\newtheorem{corollary}[theorem]{Corollary}
\newtheoremstyle{maaritelma}% name
{5pt}% space above
{5pt}% space below
{\rmfamily}% body font
{\parindent}% indent amount (empty = no indent)
{\bfseries}% theorem head font
{.}% punctuation after theorem head
{.5em}% space after theorem head
{}% theorem head spec (can be left empty, meaning 'normal')
\theoremstyle{maaritelma}
\newtheorem{example}[theorem]{Example}
\newtheorem{remark}[theorem]{Remark}
\numberwithin{equation}{section}
\begin{document}

\thispagestyle{empty}

\begin{center}

{\large{\textbf{General theory of balayage on locally compact spaces. Applications to weighted minimum energy problems}}}

\vspace{18pt}

\textbf{Natalia Zorii}

\vspace{18pt}

\emph{In memory of Bent Fuglede (8.10.1925\,--\,7.12.2023)}\vspace{8pt}

\footnotesize{\address{Institute of Mathematics, Academy of Sciences
of Ukraine, Tereshchenkivska~3, 02000, Kyiv, Ukraine\\
natalia.zorii@gmail.com }}

\end{center}

\vspace{12pt}

{\footnotesize{\textbf{Abstract.} Under suitable requirements on a kernel on a locally compact space, we develop a theory of inner (outer) balayage of quite general Radon measures $\omega$ (not necessarily of finite energy) onto quite general sets (not necessarily closed). We prove the existence and the uniqueness of inner (outer) swept measures, analyze their properties, and provide a number of alternative characterizations. In spite of being in agreement with Cartan's theory of Newtonian balayage, the results obtained require essentially new methods and approaches, since in the case in question, useful specific features of Newtonian potentials may fail to hold. The theory thereby established extends considerably that by Fuglede (Anal.\ Math., 2016) and that by the author (Anal.\ Math., 2022), these two dealing with $\omega$ of finite energy. Such a generalization enables us to improve substantially our recent results on the Gauss variational problem (Constr.\ Approx., 2024), by strengthening their formulations and/or by extending the area of their validity. This study covers many interesting kernels in classical and modern potential theory, which also looks promising for other applications.}}
\symbolfootnote[0]{\quad 2010 Mathematics Subject Classification: Primary 31C15.}
\symbolfootnote[0]{\quad Key words: Kernels on a locally compact space; energy, consistency, and maximum principles; inner and outer balayage; minimum energy problems with external fields.
}

\vspace{6pt}

\markboth{\emph{Natalia Zorii}} {\emph{General theory of balayage on locally compact spaces and its applications}}

\section{Introduction and main results}\label{sec-intr}

\subsection{Statement of the problem and general conventions}\label{sec-intr1} This paper deals with the theory of potentials on a locally compact (Hausdorff) space $X$ with respect to a {\it kernel} $\kappa$, $\kappa$ being thought of as a symmetric, lower semi\-con\-tin\-uous (l.s.c.) function $\kappa:X\times X\to[0,\infty]$.
To be exact, we are interested in generalizations of the classical theory of balayage on $\mathbb R^n$, $n\geqslant2$ (see e.g.\ \cite{Br}--\cite{Doob}, \cite{L}), to suitable kernels $\kappa$ on $X$.

For a kernel $\kappa$ satisfying the energy, consistency, and domination principles, the theory of {\it inner} balayage of a positive Radon measure $\sigma$ on $X$ of {\it finite} energy to an {\it arbitrary} set $A\subset X$ was originated in the author's paper \cite{Z-arx1}, and it found a further development in \cite{Z-arx-22,Z-arx}. The main tool exploited there was the discovered identity between the inner balayage and the orthogonal projection of $\sigma$ in the pre-Hil\-bert space $\mathcal E$ of all (signed) Radon measures of finite energy onto the strong closure of the class $\mathcal E^+(A)$ of all positive $\mu\in\mathcal E$ concentrated on the set $A$ (see \cite[Theorem~4.3]{Z-arx1}).

It is worth emphasizing here that such an approach, substantially based on the concept of energy and a pre-Hilbert structure on suitable spaces of signed Radon measures, is not covered by the ones developed in the setting of balayage spaces \cite{BH} or $H$-cones \cite{BBC}, both dealing with the theory of {\it outer} balayage.

The main aim of this paper is to extend the theory, established in \cite{Z-arx1}--\cite{Z-arx},
to positive Radon measures $\omega$ on $X$ of {\it infinite} energy, thereby giving an answer to \cite[Problem~7.1]{Z-arx}. Along with an essentially new approach than the one utilized in \cite{Z-arx1}--\cite{Z-arx}, this also requires some additional assumptions imposed on $X$, $\kappa$, $A$, and $\omega$.

In what follows, a locally compact space $X$ is assumed to be {\it second-countable}. Then it is
{\it $\sigma$-compact} (that is, representable as a countable union of compact sets \cite[Section~I.9, Definition~5]{B1}), see \cite[Section~IX.2, Corollary to Proposition~16]{B3}; and hence
negligibility is the same as local negligibility \cite[Section~IV.5, Corollary~3 to Proposition~5]{B2}. (The reader is expected to be familiar with principal concepts of the theory of measures and integration on a locally compact space. For its exposition we refer to Bourbaki \cite{B2} or Edwards \cite{E2}; see also Fuglede \cite{F1} for a brief survey.)

We denote by $\mathfrak M$ the linear space of all (real-valued Radon) measures $\mu$ on $X$, equipped with the {\it vague} (={\it weak\,$^*$}) topology of pointwise convergence on the class $C_0(X)$ of all continuous functions $\varphi:X\to\mathbb R$ of compact support, and by $\mathfrak M^+$ the cone of all positive $\mu\in\mathfrak M$, where $\mu$ is {\it positive} if and only if $\mu(\varphi)\geqslant0$ for all positive $\varphi\in C_0(X)$. The space $X$ being second-countable, every $\mu\in\mathfrak M$ has a {\it countable} base of vague neighborhoods, see \cite[Lemma~4.4]{Z-arx}, and hence any vaguely bounded set $\mathfrak B\subset\mathfrak M$ has a {\it sequence} $(\mu_j)\subset\mathfrak B$ that is vaguely convergent to some $\mu_0\in\mathfrak M$, cf.\ \cite[Section~III.1, Proposition~15]{B2}.

Given $\mu,\nu\in\mathfrak M$, the {\it mutual energy} and the {\it potential} are introduced by
\begin{align*}
  I(\mu,\nu)&:=\int\kappa(x,y)\,d(\mu\otimes\nu)(x,y),\\
  U^\mu(x)&:=\int\kappa(x,y)\,d\mu(y),\quad x\in X,
\end{align*}
respectively, provided the value on the right is well defined as a finite number or $\pm\infty$. For $\mu=\nu$, the mutual energy $I(\mu,\nu)$ defines the {\it energy} $I(\mu,\mu)=:I(\mu)$ of $\mu\in\mathfrak M$.

Throughout this paper, a kernel $\kappa$ is assumed to satisfy the {\it energy principle}, or equivalently to be {\it strictly positive definite}, which means that $I(\mu)\geqslant0$ for all (signed) $\mu\in\mathfrak M$, and moreover that $I(\mu)=0$ only for $\mu=0$. Then all (signed) measures of finite energy form a pre-Hil\-bert space $\mathcal E$ with the inner product $\langle\mu,\nu\rangle:=I(\mu,\nu)$ and the energy norm $\|\mu\|:=\sqrt{I(\mu)}$, cf.\ \cite[Lemma~3.1.2]{F1}. The topology on $\mathcal E$ introduced by means of this norm is said to be {\it strong}.

Another permanent requirement imposed on a kernel $\kappa$ is that it satisfies the {\it consistency} principle, which means that the cone
$\mathcal E^+:=\mathcal E\cap\mathfrak M^+$ is {\it complete} in the induced strong topology, and that the strong topology on $\mathcal E^+$ is {\it finer} than the vague topology on $\mathcal E^+$; such a kernel is said to be {\it perfect} (Fuglede \cite[Section~3.3]{F1}). Thus any strong Cauchy sequence (net) $(\mu_j)\subset\mathcal E^+$ converges {\it both strongly and vaguely} to one and the same unique measure $\mu_0\in\mathcal E^+$, the strong topology on $\mathcal E$ as well as the vague topology on $\mathfrak M$ being Hausdorff.

Besides, we always postulate the {\it domination} and {\it Ugaheri maximum principles}, where the former means that for any $\mu\in\mathcal E^+$ and $\nu\in\mathfrak M^+$ with $U^\mu\leqslant U^\nu$ $\mu$-a.e., the same inequality holds everywhere on $X$; whereas the latter means that there exists $M:=M_{X,\kappa}\in[1,\infty)$, depending on $X$ and $\kappa$ only, such that for each $\mu\in\mathcal E^+$ with $U^\mu\leqslant c_\mu$ $\mu$-a.e., where $c_\mu\in(0,\infty)$, $U^\mu\leqslant Mc_\mu$ on all of $X$. When $M$ is specified, we speak of {\it $M$-Uga\-heri's maximum principle}, and when $M=1$, $M$-Ugaheri's maximum principle is referred to as {\it Frostman's maximum principle} (Ohtsuka \cite[Section~2.1]{O}).

For the {\it inner} and {\it outer} capacities of a set $A\subset X$, denoted by $c_*(A)$ and $c^*(A)$, respectively, we refer to Fuglede \cite[Section~2.3]{F1}. If $A$ is capacitable (e.g.\ open or compact), we write $c(A):=c_*(A)=c^*(A)$. An assertion $\mathcal A(x)$ involving a variable point $x\in X$ is said to hold {\it nearly everywhere} ({\it n.e.}) on $A$ if the set $N$ of all $x\in A$ where $\mathcal A(x)$ fails, is of inner capacity zero. Replacing here $c_*(N)=0$ by $c^*(N)=0$, we arrive at the concept of {\it quasi-everywhere} ({\it q.e.}) on $A$.

For any $A\subset X$, we denote by $\mathfrak C_A$ the upward directed set of all compact subsets $K$ of $A$, where $K_1\leqslant K_2$ if and only if $K_1\subset K_2$. If a net $(x_K)_{K\in\mathfrak C_A}\subset Y$ converges to $x_0\in Y$, $Y$ being a topological space, then we shall indicate this fact by writing
\begin{equation*}x_K\to x_0\text{ \ in $Y$ as $K\uparrow A$}.\end{equation*}

Given $A\subset X$, let $\mathfrak M^+(A)$ denote the set of all $\mu\in\mathfrak M^+$ {\it concentrated on} $A$, which means that $A^c:=X\setminus A$ is $\mu$-negligible, or equivalently that $A$ is $\mu$-mea\-s\-ur\-ab\-le and $\mu=\mu|_A$, $\mu|_A$ being the trace of $\mu$ to $A$, cf.\ \cite[Section~V.5.7]{B2}. (If $A$ is closed, then $\mathfrak M^+(A)$ consists of all $\mu\in\mathfrak M^+$ with support $S(\mu)\subset A$, cf.\ \cite[Section~III.2.2]{B2}.)

Also define $\mathcal E^+(A):=\mathcal E\cap\mathfrak M^+(A)$. As seen from \cite[Lemma~2.3.1]{F1},
\begin{equation}\label{iff}
 c_*(A)=0\iff\mathcal E^+(A)=\{0\}\iff\mathcal E^+(K)=\{0\}\quad\text{for every $K\in\mathfrak C_A$}.
\end{equation}

To avoid trivialities, we suppose throughout the paper that
\begin{equation*}
 c_*(A)>0.
\end{equation*}
While approximating $A$ by $K\in\mathfrak C_A$, we may therefore only consider $K$ with $c(K)>0$.

Yet another permanent condition imposed on a set $A$ is that the cone $\mathcal E^+(A)$ is {\it strongly closed}.\footnote{As shown in \cite[Theorem~2.13]{Z-Oh}, this occurs if $A$ is quasiclosed or quasicompact. By Fuglede \cite[Definition~2.1]{F71}, a set $A\subset X$ is said to be {\it quasiclosed\/} if
\begin{equation*}\label{def-q}
\inf\,\bigl\{c^*(A\bigtriangleup F):\ F\text{ closed, }F\subset X\bigr\}=0,
\end{equation*}
$\bigtriangleup$ being the symmetric difference. Replacing in this definition "closed" by "compact", we arrive at the concept of a {\it quasicompact\/} set. The question whether there exists a set $A\subset X$ that is not quasiclosed, whereas the cone $\mathcal E^+(A)$ is still strongly closed, seems to be open.} Being, therefore, a strongly closed subcone of the strongly complete cone $\mathcal E^+$, the cone $\mathcal E^+(A)$ is {\it complete} in the induced strong topology.

The above-mentioned requirements on a locally compact space $X$, a kernel $\kappa$, and a set $A$ will usually not be repeated  henceforth. Throughout the paper, $M=M_{X,\kappa}$ denotes the constant appearing in Ugaheri's maximum principle.

\begin{remark}
 All those assumptions on $X$ and $\kappa$ are fulfilled by:\footnote{For all these kernels, $M_{X,\kappa}=1$, that is, Frostman's maximum principle actually holds.}
\begin{itemize}
  \item[$\checkmark$] The $\alpha$-Riesz kernels $|x-y|^{\alpha-n}$ of order $\alpha\in(0,2]$, $\alpha<n$, on $\mathbb R^n$, $n\geqslant2$ (see  \cite[Theorems~1.10, 1.15, 1.18, 1.27, 1.29]{L}).
  \item[$\checkmark$] The associated $\alpha$-Green kernels, where $\alpha\in(0,2]$ and $\alpha<n$, on an arbitrary open subset of $\mathbb R^n$, $n\geqslant2$ (see \cite[Theorems~4.6, 4.9, 4.11]{FZ}).
   \item[$\checkmark$] The ($2$-)Green kernel on a planar Greenian set (see \cite[Theorem~5.1.11]{AG}, \cite[Sections~I.V.10, I.XIII.7]{Doob}, and \cite{E}).
\end{itemize}
\end{remark}

\begin{remark}The logarithmic kernel $-\log\,|x-y|$ on a closed disc in $\mathbb R^2$ of radius ${}<1$ is strictly positive definite and satisfies Frostman's maximum principle \cite[Theorems~1.6, 1.16]{L}, and hence it is perfect \cite[Theorem~3.4.2]{F1}. However, the domination principle then fails in general; it does hold only in a weaker sense where the measures $\mu,\nu$ involved in the ab\-ove-quo\-ted definition meet the additional requirement  $\nu(\mathbb R^2)\leqslant\mu(\mathbb R^2)$ \cite[Theorem~II.3.2]{ST}. Because of this obstacle, the theory of balayage on a locally compact space $X$, developed in \cite{Z-arx1}--\cite{Z-arx} and the present paper, does not cover the case of the logarithmic kernel on $\mathbb R^2$.\end{remark}

\subsection{Inner and outer balayage for $\omega\in\mathfrak M^+$}\label{sec-intr2} In what follows, fix $\omega\in\mathfrak M^+$, $\omega\ne0$, such that either $\omega\in\mathcal E^+$, or (a)--(c) hold true, where:
\begin{itemize}
  \item[(a)] $\omega$ {\it is bounded, that is,} $\omega(X)<\infty$.
  \item[(b)] {\it For every compact $K\subset A$, $U^\omega|_K$ is upper semicontinuous} ({\it hence continuous}).\footnote{When speaking of a continuous function, we generally understand that the values are {\it finite} real numbers.}
  \item[(c)] $U^\omega$ {\it is bounded on $A$, i.e.}\footnote{The estimate $m_A>0$ follows from the fact that any strictly positive definite kernel is strictly positive, i.e.\ $\kappa(x,x)>0$ for all $x\in X$ (see \cite[p.~150]{F1}).}
  \begin{equation}\label{ma}
  m_A:=\sup_{x\in A}\,U^\omega(x)\in(0,\infty).
  \end{equation}
  \end{itemize}

It is sometimes convenient to treat $\omega$ as a charge creating {\it the external field}
\begin{equation*}f:=-U^\omega.\end{equation*}
Let $\mathcal E_f^+(A)$ consist of all $\mu\in\mathcal E^+(A)$ for which $f$ is $\mu$-integrable (see \cite{B2}, Chapter~IV, Sections~3, 4); then for
every $\mu\in\mathcal E^+_f(A)$, the so-called {\it Gauss functional\/}\footnote{For the terminology used here, see e.g.\ \cite{L,O}. In constructive function theory, $I_f(\mu)$ is often
referred to as {\it the $f$-weighted energy}, see e.g.\ \cite{BHS,ST}.}
\begin{equation}\label{Gf}
I_f(\mu):=\|\mu\|^2+2\int f\,d\mu=\|\mu\|^2-2\int U^\omega\,d\mu
\end{equation}
is finite. By the Cauchy--Schwarz (Bunyakovski) inequality,
\begin{equation}\label{EEE}
\mathcal E_f^+(A)=\mathcal E^+(A)
\end{equation}
if $\omega$ is of finite energy, while otherwise $\mathcal E_f^+(A)$ includes all bounded $\mu\in\mathcal E^+(A)$, the latter being clear from (c). We denote
\begin{equation}\label{hatw'}
 \widehat{w}_f(A):=\inf_{\mu\in\mathcal E^+_f(A)}\,I_f(\mu)\in[-\infty,0],
\end{equation}
the estimate $\widehat{w}_f(A)\leqslant0$ being caused by the fact that $0\in\mathcal E^+_f(A)$ while $I_f(0)=0$.

Also define
\begin{equation}\label{gamma}
 \Gamma_{A,\omega}:=\bigl\{\nu\in\mathfrak M^+: \ U^\nu\geqslant U^\omega\text{ \ n.e.\ on $A$}\bigr\}.
\end{equation}

\begin{theorem}\label{th1} There is one and the same measure $\omega^A$, called the inner balayage of $\omega$ onto $A$, that is uniquely characterized by any one of the following {\rm(i)--(vi)}.
\begin{itemize}
\item[{\rm(i)}] $\omega^A$ is the only measure in $\mathcal E^+(A)$ having the property
\begin{equation}\label{1}
 U^{\omega^A}=U^\omega\quad\text{n.e.\ on $A$},
\end{equation}
or equivalently
\begin{equation}\label{eqpr3}
\int U^{\omega^A}\,d\mu=\int U^\omega\,d\mu\quad\text{for all $\mu\in\mathcal E^+(A)$}.
\end{equation}
\item[{\rm(ii)}] $\omega^A$ is the only measure in $\mathcal E^+(A)$ satisfying the symmetry relation
\begin{equation}\label{ii}
\int U^{\omega^A}\,d\sigma=\int U^\omega\,d\sigma^A\quad\text{for all $\sigma\in\mathcal E^+$},
\end{equation}
where $\sigma^A$ denotes the only measure in $\mathcal E^+(A)$ with $U^{\sigma^A}=U^\sigma$ n.e.\ on $A$.\footnote{Such $\sigma^A$ does exist; it can be found as the orthogonal projection of $\sigma\in\mathcal E^+$ in the pre-Hil\-bert space $\mathcal E$ onto the convex, strongly complete cone $\mathcal E^+(A)$ (see \cite[Theorem~4.3]{Z-arx1}). Regarding the concept of orthogonal projection in a pre-Hilbert space, see \cite[Theorem~1.12.3]{E2}.}
\item[{\rm(iii)}] $\omega^A$ is uniquely determined within $\mathcal E^+(A)$ by any one of the limit relations\footnote{Assertion (iii) justifies the term "{\it inner} balayage".}
\begin{align}
  \omega^K&\to\omega^A\quad\text{strongly in $\mathcal E^+$ as $K\uparrow A$},\label{c1}\\
  \omega^K&\to\omega^A\quad\text{vaguely in $\mathfrak M^+$ as $K\uparrow A$},\label{c2}\\
  U^{\omega^K}&\uparrow U^{\omega^A}\quad\text{pointwise on $X$ as $K\uparrow A$},\label{c3}
\end{align}
where $\omega^K$ denotes the only measure in $\mathcal E^+(K)$ with $U^{\omega^K}=U^\omega$ n.e.\ on $K$ {\rm(}such $\omega^K$ does exist\/{\rm)}.
\item[{\rm(iv)}] $\omega^A$ is the unique solution to the problem of minimizing $I_f(\mu)$ over $\mu\in\mathcal E^+_f(A)$. That is, $\omega^A\in\mathcal E^+_f(A)$ and
    \begin{equation}\label{hatw}
     I_f(\omega^A)=\min_{\mu\in\mathcal E^+_f(A)}\,I_f(\mu)=\widehat{w}_f(A)\in(-\infty,0].
    \end{equation}
\item[{\rm(v)}] $\omega^A$ is the only measure in $\Gamma_{A,\omega}\cap\mathcal E^+$ having the property
\begin{equation}\label{minpot}
U^{\omega^A}=\min_{\nu\in\Gamma_{A,\omega}}\,U^\nu\quad\text{on $X$}.
\end{equation}
\item[{\rm(vi)}] $\omega^A$ is the only measure in $\Gamma_{A,\omega}\cap\mathcal E^+$ having the property
\begin{equation}\label{minen}
  I(\omega^A)=\min_{\nu\in\Gamma_{A,\omega}}\,I(\nu).
\end{equation}
\end{itemize}
\end{theorem}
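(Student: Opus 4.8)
The plan is to split into the two cases allowed by the hypotheses. When $\omega\in\mathcal{E}^+$, one lets $\omega^A$ be the orthogonal projection of $\omega$ onto the convex, strongly complete cone $\mathcal{E}^+(A)$, which reduces everything to the finite-energy balayage theory of \cite{Z-arx1}--\cite{Z-arx}. The substance of the theorem is the case where $\omega$ has infinite energy but (a)--(c) hold; there I would proceed by exhausting $A$ from within, $K\uparrow A$ through $\mathfrak{C}_A$, the two main tasks being (A) to construct the inner balayage $\omega^K$ onto a compact $K$ with $c(K)>0$ of a measure $\omega$ that need not be of finite energy, and (B) to pass to the limit along the net $(\omega^K)_{K\in\mathfrak{C}_A}$.

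For (A): by (b), $U^\omega|_K$ is continuous, hence bounded, on $K$. One minimizes the Gauss functional $I_f(\mu)=\|\mu\|^2-2\int U^\omega\,d\mu$ over $\mathcal{E}_f^+(K)$. Coercivity (a finite lower bound, and bounded norms along a minimizing sequence) follows from $\mu(K)\leqslant\|\mu\|\,\|\gamma_K\|$ for $\mu\in\mathcal{E}^+(K)$, where $\gamma_K\in\mathcal{E}^+$ satisfies $U^{\gamma_K}\geqslant1$ n.e.\ on $K$ (a normalized equilibrium measure of $K$, which exists because $c(K)>0$). A minimizing sequence is then strongly Cauchy by the convexity identity $I_f(\tfrac{\mu_i+\mu_j}{2})=\tfrac12 I_f(\mu_i)+\tfrac12 I_f(\mu_j)-\tfrac14\|\mu_i-\mu_j\|^2$; its strong limit $\omega^K\in\mathcal{E}^+(K)$ is the minimizer, the passage $I_f(\mu_j)\to I_f(\omega^K)$ being legitimate precisely because $U^\omega|_K$ is continuous, so that $\int U^\omega\,d\mu_j\to\int U^\omega\,d\omega^K$ by vague convergence of $(\mu_j)$ tested against a continuous compactly supported extension of $U^\omega|_K$. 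The variational inequality yields $\|\omega^K\|^2=\int U^\omega\,d\omega^K$ and $U^{\omega^K}\geqslant U^\omega$ n.e.\ on $K$, whence $U^{\omega^K}=U^\omega$ $\omega^K$-a.e., and the domination principle forces $U^{\omega^K}\leqslant U^\omega$ on $X$; so $U^{\omega^K}=U^\omega$ n.e.\ on $K$. Since then $U^{\omega^K}=U^\omega\leqslant m_A$ $\omega^K$-a.e., $M$-Ugaheri's maximum principle gives $U^{\omega^K}\leqslant Mm_A$ on $X$, and hence, using (a) and (c), the uniform bound $\|\omega^K\|^2=\int U^{\omega^K}\,d\omega\leqslant Mm_A\,\omega(X)<\infty$. (When $\omega\in\mathcal{E}^+$ one takes $\omega^K$ to be the projection of $\omega$ onto $\mathcal{E}^+(K)$, with $\|\omega^K\|\leqslant\|\omega\|$.)

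For (B): the key computation is that for $K_1\subset K_2$ in $\mathfrak{C}_A$ one has $I(\omega^{K_1},\omega^{K_2})=\int U^{\omega^{K_2}}\,d\omega^{K_1}=\int U^\omega\,d\omega^{K_1}=\|\omega^{K_1}\|^2$, so that
\[\|\omega^{K_2}-\omega^{K_1}\|^2=\|\omega^{K_2}\|^2-\|\omega^{K_1}\|^2\geqslant0.\]
Thus $(\|\omega^K\|^2)_{K\in\mathfrak{C}_A}$ is increasing and, by the uniform bound, bounded; hence Cauchy, and (comparing non-nested $K_1,K_2$ through $K_1\cup K_2$) the net $(\omega^K)_{K\in\mathfrak{C}_A}$ is strongly Cauchy. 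Its strong limit $\omega^A\in\mathcal{E}^+(A)$ gives \eqref{c1}; the consistency principle (strong $\Rightarrow$ vague convergence) gives \eqref{c2}; and since $U^{\omega^K}$ is nondecreasing in $K$ (by the domination principle) and $\leqslant U^\omega$ on $X$, combining the strong convergence with the principle of descent identifies $\sup_K U^{\omega^K}$ with $U^{\omega^A}$, i.e.\ \eqref{c3}. Applying \eqref{c3} and the equality $U^{\omega^K}=U^\omega$ n.e.\ on $K$ on each compact $K\subset A$, and recalling that inner capacity is the supremum of the capacities of compact subsets, we obtain $U^{\omega^A}=U^\omega$ n.e.\ on $A$, i.e.\ \eqref{1}; the equivalent relation \eqref{eqpr3} and uniqueness of $\omega^A$ within $\mathcal{E}^+(A)$ then follow from $\mu$-a.e.\ agreement of potentials together with strict positive definiteness.

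With characterization (i) established, the remaining ones follow by short arguments on the pre-Hilbert space $\mathcal{E}$ and the domination principle: for $\mu\in\mathcal{E}^+(A)$ one has $\int U^\omega\,d\mu=\langle\mu,\omega^A\rangle$, so $I_f(\mu)=\|\mu-\omega^A\|^2-\|\omega^A\|^2$, which yields (iv) with $\widehat{w}_f(A)=-\|\omega^A\|^2\in(-\infty,0]$; for any $\nu\in\Gamma_{A,\omega}$ the inequality $U^\nu\geqslant U^\omega=U^{\omega^A}$ holds $\omega^A$-a.e., so the domination principle gives $U^{\omega^A}\leqslant U^\nu$ on $X$, which is (v), and integrating this against $\nu$ and then against $\omega^A$ gives (vi); finally (ii) comes from \eqref{eqpr3}, the symmetry of mutual energy, and the defining property of $\sigma^A$. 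The principal difficulties are concentrated in (A) and (B): condition (b) is what makes $U^\omega$ usable as admissible data on compacta and what permits passage to the limit in the Gauss functional, while (a) and (c) are what secure the uniform energy bound making the net $(\omega^K)$ strongly Cauchy; and a further delicate point is to show that the monotone convergence in \eqref{c3} holds pointwise on all of $X$, not merely nearly everywhere.
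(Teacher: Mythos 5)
Your proposal is correct and shares the paper's overall architecture (reduction to the case (a)--(c), construction of $\omega^K$ for compact $K$ by minimizing the Gauss functional, a uniform bound making the net $(\omega^K)_{K\in\mathfrak C_A}$ strongly Cauchy, identification of the limit, then (i)--(vi) by Hilbert-space and domination arguments), but several of your tactical choices differ from the paper's and are worth recording. For the compact case the paper first shows $I_f(\mu)>0$ once $\mu(X)>2m_Kc(K)$, truncates to the mass-bounded class $\mathcal E^+_{L_K}(K)$, and uses vague compactness together with vague lower semicontinuity of the energy and vague continuity of $\mu\mapsto\int f\,d\mu$; you instead run the direct method in $\mathcal E$ (coercivity via $\mu(X)\leqslant\|\mu\|\,\|\gamma_K\|$, strong Cauchyness of a minimizing sequence from the parallelogram identity, and Tietze extension of $U^\omega|_K$ to pass to the limit in the linear term), which is equally valid and somewhat cleaner. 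For the uniform control along the net, the paper proves the mass bound $\omega^K(X)\leqslant M\omega(X)$ via the capacitary measure of $S(\omega^K)$, whereas you get the energy bound $\|\omega^K\|^2=\int U^{\omega^K}\,d\omega\leqslant Mm_A\,\omega(X)$ by applying Ugaheri's principle directly to $\omega^K$; combined with your orthogonality computation $\langle\omega^{K_1},\omega^{K_2}\rangle=\|\omega^{K_1}\|^2$, giving $\|\omega^{K_2}-\omega^{K_1}\|^2=\|\omega^{K_2}\|^2-\|\omega^{K_1}\|^2$ for $K_1\subset K_2$, this replaces the paper's argument via the monotone bounded net $\widehat w_f(K)$ and the parallelogram identity for $I_f$; both buy the strong Cauchy property, yours by the classical Cartan-style monotone-norm device. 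The one place you should be more explicit is \eqref{c3}: the principle of descent gives only $U^{\omega^A}\leqslant\sup_K U^{\omega^K}$ on $X$, and the reverse inequality everywhere (not merely n.e.) requires extracting from the strongly convergent net a sequence, cofinal above a fixed $K$, whose potentials converge to $U^{\omega^A}$ n.e.\ on $X$, and then invoking the domination principle for each $\omega^K$ --- you flag this as delicate but do not execute it; once done, your derivation of \eqref{1} from \eqref{c3}, the everywhere bound $U^{\omega^A}\leqslant U^\omega$, and the fact that inner capacity is the supremum over compact subsets is a legitimate (and slightly shorter) alternative to the paper's subsequence-plus-countable-subadditivity route.
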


\begin{remark}As seen from (\ref{eqpr3}), (\ref{EEE}) is actually valid even if $\omega\not\in\mathcal E^+$. Hence, Theorem~\ref{th1}(iv) remains in force with $\mathcal E^+_f(A)$ replaced throughout by $\mathcal E^+(A)$.\label{rem-int}\end{remark}

\begin{theorem}\label{th1'}If $A$ is Borel, then Theorem~{\rm\ref{th1}} remains valid with "n.e.\ on $A$" replaced throughout by "q.e.\ on $A$". The measure $\omega^{*A}$, thereby uniquely determined, is said to be the outer balayage of $\omega$ onto $A$. Actually, $\omega^{*A}=\omega^A$.\end{theorem}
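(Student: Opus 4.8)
The plan is to leave the whole force of Theorem~\ref{th1} intact and to upgrade ``n.e.'' to ``q.e.'' by a single measurability remark. The first thing I would record is that, since $\kappa\geqslant0$ is l.s.c., every potential $U^\mu$ with $\mu\in\mathfrak M^+$ is l.s.c.\ and hence Borel measurable; consequently, for any $\mu,\tau\in\mathfrak M^+$ the set $\{x:\ U^\mu(x)\ne U^\tau(x)\}$ is Borel. Thus, if $B\subset X$ is Borel, then $N:=\{x\in B:\ U^\mu(x)\ne U^\tau(x)\}$ is Borel, hence capacitable (Fuglede~\cite{F1}), so that $c_*(N)=0$ already implies $c^*(N)=0$; in other words, for Borel $B$ the phrases ``$U^\mu=U^\tau$ n.e.\ on $B$'' and ``$U^\mu=U^\tau$ q.e.\ on $B$'' are interchangeable. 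The same applies with $B$ replaced by any $K\in\mathfrak C_A$, each such $K$ being compact, hence Borel. This observation is the entire point of the Borel hypothesis, and I expect it to be the only genuinely nontrivial step.

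Granting this, I would run through (i)--(vi). In (i): Theorem~\ref{th1}(i) gives $U^{\omega^A}=U^\omega$ n.e.\ on $A$, hence q.e.\ on $A$ by the above; and if $\mu\in\mathcal E^+(A)$ satisfies $U^\mu=U^\omega$ q.e.\ on $A$, then a fortiori n.e.\ on $A$, whence $\mu=\omega^A$ by Theorem~\ref{th1}(i); the equivalent integral form \eqref{eqpr3} carries no exceptional-set convention and is, by Theorem~\ref{th1}(i) combined with the above (applied to $U^\mu$ and $U^\omega$ for $\mu\in\mathcal E^+(A)$), equivalent to either version of \eqref{1}. In (ii) and (iii), the auxiliary measures $\sigma^A$ ($\sigma\in\mathcal E^+$) and $\omega^K$ ($K\in\mathfrak C_A$) are themselves inner balayages, so by the first paragraph (applied to the pairs $(\sigma^A,\sigma)$ on $A$ and $(\omega^K,\omega)$ on $K$; the case $\sigma=0$ being trivial) they are \emph{equally} characterized within $\mathcal E^+(A)$, resp.\ $\mathcal E^+(K)$, by the corresponding q.e.\ potential relations; hence they are the very same measures, and so the symmetry relation \eqref{ii} and the limit relations \eqref{c1}--\eqref{c3} are unchanged. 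Part (iv) mentions no exceptional-set convention at all. So each of (i)--(iv), read with ``q.e.'', singles out $\omega^A$.

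For (v) and (vi) the only change is that $\Gamma_{A,\omega}$ is replaced by $\Gamma^*_{A,\omega}:=\{\nu\in\mathfrak M^+:\ U^\nu\geqslant U^\omega\ \text{q.e.\ on }A\}$, which is a \emph{subset} of $\Gamma_{A,\omega}$ since ``q.e.'' is stronger than ``n.e.''. I would note that $U^{\omega^A}=U^\omega$ q.e.\ on $A$ (first paragraph) puts $\omega^A$ in $\Gamma^*_{A,\omega}\cap\mathcal E^+$; since $\omega^A$ minimizes $U^\nu$ (resp.\ $I(\nu)$) over the larger class $\Gamma_{A,\omega}$ by Theorem~\ref{th1}(v) (resp.\ (vi)), it does so over $\Gamma^*_{A,\omega}$ too, with $\min_{\nu\in\Gamma^*_{A,\omega}}U^\nu=\min_{\nu\in\Gamma_{A,\omega}}U^\nu=U^{\omega^A}$ on $X$ because $\omega^A\in\Gamma^*_{A,\omega}$; and the required uniqueness inside $\Gamma^*_{A,\omega}\cap\mathcal E^+$ is inherited from uniqueness inside $\Gamma_{A,\omega}\cap\mathcal E^+$. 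Hence the q.e.\ versions of (v), (vi) again single out $\omega^A$. Consequently the six q.e.\ characterizations are mutually consistent and determine one and the same measure; since $\omega^{*A}$ is \emph{defined} as that measure, $\omega^{*A}=\omega^A$ follows.

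The hard part, as indicated, is really just the capacitability of the exceptional set: without $A$ Borel there is no reason for $N$ to be capacitable, so ``$c_*(N)=0$'' need not give ``$c^*(N)=0$'' and the n.e./q.e.\ equivalence can break down. Everything else is bookkeeping organized around the one-way implication ``q.e.\ $\Rightarrow$ n.e.'', which embeds each q.e.\ class into the corresponding n.e.\ class and thereby transports existence, uniqueness and minimality from Theorem~\ref{th1}.
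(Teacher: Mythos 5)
Your proposal is correct and follows essentially the same route as the paper: the whole content of the theorem is that for Borel $A$ the exceptional sets (intersections of $A$ with Borel sets where two l.s.c.\ potentials differ) are Borel, hence capacitable by Fuglede's capacitability theorem, so $c_*=0$ upgrades to $c^*=0$ and ``n.e.'' becomes ``q.e.''\ throughout. The only thing the paper makes explicit that you leave as a bare citation is \emph{why} Fuglede's capacitability theory applies here, namely that a second-countable locally compact space is metrizable and $\sigma$-compact, hence perfectly normal; with that noted, your bookkeeping through (i)--(vi) matches what the paper leaves implicit.
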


\begin{remark} The concept of inner (outer) balayage, introduced in Theorems~\ref{th1} and \ref{th1'}, respectively, is essentially in agreement with that of inner (outer) Newtonian balayage by Cartan \cite{Ca2}. However, even for the Newtonian kernel, Theorems~\ref{th1} and \ref{th1'} would fail to hold if the imposed requirements on $\omega$ were omitted. For instance, if $A\subset\mathbb R^n$, $n\geqslant3$, is closed and $\omega:=\varepsilon_y$, where a point $y$ is $2$-regular for $A$ while $\varepsilon_y$ denotes the unit Dirac measure at $y$, then there is {\it no} $\omega^A\in\mathcal E^+(A)$ satisfying (\ref{1}). (Concerning the concept of $2$-regular points for a set $A$, see e.g.\ \cite[Section~V.1.2]{L}.)
\end{remark}

\begin{remark}
Theorems~\ref{th1} and \ref{th1'} would fail to hold if the strong closedness of $\mathcal E^+(A)$ were omitted from the hypotheses. Indeed, in $X:=\mathbb R^n$, $n\geqslant3$, let $\kappa(x,y):=|x-y|^{2-n}$, where $x,y\in\mathbb R^n$, $A:=\{|x|<1\}$, and  $\omega:=\varepsilon_z$, where $|z|>1$. Then, the (Newtonian) balayage $\omega^A$ of $\omega$ to $A$ is a positive measure of finite (Newtonian) energy with $S(\omega^A)=\partial_{\mathbb R^n}A$,\footnote{This can easily be seen from the relationship between Newtonian swept and equilibrium measures, provided by the well-known formula with Kelvin transform involved, see \cite[Section~IV.5.20]{L}.} and hence $\omega^A\not\in\mathcal E^+(A)$.
\end{remark}

\begin{remark}If $A$ is quasiclosed and $\omega\in\mathcal E^+$, the existence of the outer balayage $\omega^{*A}$, characterized by (i) or (iv) with "q.e.\ on $A$" in place of "n.e.\ on $A$", was proved by Fuglede (see \cite[Theorem~4.12]{Fu5}, cf.\ also \cite[Section~6.7]{Fu4}). The methods exploited in the present paper are substantially different from those in \cite{Fu4,Fu5}, which enabled us to generalize Fuglede's results to $\omega$ of infinite energy as well as to establish new characteristic properties of the outer balayage $\omega^{*A}$, given by (ii), (iii), (v), and (vi).\end{remark}

\section{Proofs of Theorems~\ref{th1} and \ref{th1'}}\label{sec-pr-th1}
We begin with some known (or easily verified) facts, often useful in the sequel.

We denote by $\mu_*(\cdot)$ and $\mu^*(\cdot)$ the {\it inner} and {\it outer} measure of a set, respectively, and by $\mathcal U$ the class of all {\it universally measurable} sets $U\subset X$ (that is, measurable with respect to every measure on $X$).

\begin{lemma}\label{l1}
For any $E\subset X$, any $\mu\in\mathcal E^+(E)$, and any $U\in\mathcal U$ with the property $c_*(E\cap U)=0$, we have $\mu^*(E\cap U)=0$.
\end{lemma}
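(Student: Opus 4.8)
\emph{Proof proposal.} The plan is to reduce the claim to the basic equivalence \eqref{iff} by passing to a suitable trace of $\mu$. Since $U\in\mathcal U$, the set $U$ is $\mu$-measurable; and since $\mu$ is concentrated on $E$, the set $E$ is $\mu$-measurable as well ($X\setminus E$ being $\mu$-negligible). Hence $E\cap U$ is $\mu$-measurable, so the trace $\nu:=\mu|_{E\cap U}$ is a well-defined positive Radon measure on $X$ (cf.\ \cite[Section~V.5.7]{B2}), concentrated on $E\cap U$, and satisfying $\nu(X)=\mu(E\cap U)=\mu^*(E\cap U)$.

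Next I would observe that $\nu\in\mathcal E^+$: as $\kappa\geqslant0$ and $\nu\leqslant\mu$ (so that $\nu\otimes\nu\leqslant\mu\otimes\mu$), the monotonicity of the integral gives
\[I(\nu)=\int\kappa(x,y)\,d(\nu\otimes\nu)(x,y)\leqslant\int\kappa(x,y)\,d(\mu\otimes\mu)(x,y)=I(\mu)<\infty.\]
Being moreover concentrated on $E\cap U$, the measure $\nu$ lies in $\mathcal E^+(E\cap U)$. Now I would apply \eqref{iff} with $E\cap U$ in the role of $A$: the hypothesis $c_*(E\cap U)=0$ forces $\mathcal E^+(E\cap U)=\{0\}$, whence $\nu=0$, and therefore $\mu^*(E\cap U)=\nu(X)=0$, as desired.

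I do not anticipate any real obstacle. The argument is short, and its only delicate point is measure-theoretic: the assumption $U\in\mathcal U$ is needed precisely so that $E\cap U$ is $\mu$-measurable — which legitimizes both the trace $\mu|_{E\cap U}$ and the equality $\nu(X)=\mu^*(E\cap U)$ — without any measurability of $E$ beyond what "$\mu$ is concentrated on $E$" already supplies. Everything else is the elementary monotonicity bound $I(\nu)\leqslant I(\mu)$ together with the ready-made equivalence \eqref{iff} from \cite[Lemma~2.3.1]{F1}.
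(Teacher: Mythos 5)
Your proof is correct and follows essentially the same route as the paper's: establish the $\mu$-measurability of $E\cap U$ and then reduce to \eqref{iff} via the finite-energy restriction of $\mu$ to that set. The only step worth making explicit is that the identification $\nu(X)=\mu^*(E\cap U)$ (as opposed to $\mu_*(E\cap U)$) rests on the $\sigma$-compactness of $X$ (negligibility $=$ local negligibility), which is precisely where the paper invokes it.
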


\begin{proof}
Being the intersection of $\mu$-measurable $E$ and universally measurable $U$, the set $E\cap U$ is $\mu$-measurable. Since the space $X$ is $\sigma$-compact, it is therefore enough to show that $\mu_*(E\cap U)=0$, which is however obvious from (\ref{iff}).
\end{proof}

It follows, in particular, that if $U^{\nu_1}=U^{\nu_2}$ n.e.\ on $E\subset X$, where $\nu_1,\nu_2\in\mathfrak M^+$, then for every $\mu\in\mathcal E^+(E)$, the same equality holds true $\mu$-a.e.,\footnote{Here we have utilized the fact that the set $\{x\in X: U^{\nu_1}(x)\ne U^{\nu_2}(x)\}$, where $\nu_1,\nu_2\in\mathfrak M^+$, is Borel (hence universally measurable), the potential of a positive measure being l.s.c.\ on $X$.} and consequently
\begin{equation}\label{upint}
\int U^{\nu_1}\,d\mu=\int U^{\nu_2}\,d\mu
\end{equation}
(see \cite[Section~IV.2, Proposition~6]{B2}).

\begin{lemma}\label{str}
For any $E\subset X$ and any $U_j\in\mathcal U$, $j\in\mathbb N$,
\[c_*\Bigl(\bigcup_{j\in\mathbb N}\,E\cap U_j\Bigr)\leqslant\sum_{j\in\mathbb N}\,c_*(E\cap U_j).\]\end{lemma}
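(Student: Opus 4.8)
The plan is to bound $c_*(A)$, where $A:=\bigcup_{j\in\mathbb N}(E\cap U_j)$, by combining the variational description of capacity in terms of measures with a disjointification of the $U_j$ and the superadditivity of energy over disjoint pieces of a measure. Recall that for \emph{every} $B\subset X$,
\[
 c_*(B)=\sup_{\nu\in\mathcal E^+(B)\setminus\{0\}}\,\frac{\nu(X)^2}{I(\nu)}
\]
(see \cite[Section~2.3]{F1}; only ``$\leqslant$'' is needed below, and it follows in one line from the classical identity $c(L)=\sup\{\lambda(X)^2/I(\lambda):\lambda\in\mathcal E^+(L)\setminus\{0\}\}$ for $L\in\mathfrak C_B$, together with $c_*(B)=\sup_{L\in\mathfrak C_B}c(L)$, the inner regularity $\nu(X)=\sup_{L\in\mathfrak C_B}\nu(L)$, and $I(\nu|_L)\leqslant I(\nu)$). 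Accordingly, it suffices to prove
\[
 \frac{\mu(X)^2}{I(\mu)}\leqslant\sum_{j\in\mathbb N}c_*(E\cap U_j)\qquad\text{for every }\mu\in\mathcal E^+(A)\setminus\{0\},
\]
the right-hand side being assumed finite (otherwise there is nothing to prove).

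First I would split $\mu$ along the sets $U_j$. Put $V_1:=U_1$ and $V_j:=U_j\setminus\bigcup_{i<j}U_i$ for $j\geqslant2$; these belong to $\mathcal U$, are pairwise disjoint, and $\bigcup_{j}V_j=\bigcup_{j}U_j\supset A$. Being universally measurable, each $V_j$ is $\mu$-measurable, so $\mu_j:=\mu|_{V_j}$ is a well-defined Radon measure with $0\leqslant\mu_j\leqslant\mu$; hence $\mu_j\in\mathcal E^+$, and, $\mu$ being concentrated on $A$ and therefore on $\bigsqcup_{j}V_j$, we obtain $\mu=\sum_{j}\mu_j$ (as measures) and $\mu(X)=\sum_{j}\mu_j(X)$. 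Since $\mu$ is concentrated on $A\subset E$ whereas $\mu_j$ is concentrated on $V_j\subset U_j$, the measure $\mu_j$ is concentrated on $E\cap U_j$, i.e.\ $\mu_j\in\mathcal E^+(E\cap U_j)$; by the characterization recalled above this gives $\mu_j(X)^2\leqslant c_*(E\cap U_j)\,I(\mu_j)$ for each $j$ (indices $j$ with $c_*(E\cap U_j)=0$ may be dropped from all sums below, since then $\mathcal E^+(E\cap U_j)=\{0\}$ by (\ref{iff}), whence $\mu_j=0$). This is precisely where the hypothesis $U_j\in\mathcal U$ is used: without it the pieces $\mu_j$ need not even be Radon measures, let alone admissible competitors for the capacity of $E\cap U_j$.

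Next I would carry out the energy estimate. Expanding $I(\mu)=I\bigl(\sum_{i}\mu_i\bigr)$ and interchanging summation with integration — admissible by Tonelli's theorem, as $\kappa\geqslant0$ and all $\mu_i\geqslant0$ — yields $I(\mu)=\sum_{i,j}I(\mu_i,\mu_j)\geqslant\sum_{j}I(\mu_j)$, the omitted mutual energies $I(\mu_i,\mu_j)$ being nonnegative; in particular $I(\mu_j)\leqslant I(\mu)<\infty$ for every $j$. Combining this with the Cauchy--Schwarz inequality for series and the per-piece bound of the preceding step,
\begin{align*}
 \mu(X)^2=\Bigl(\sum_{j}\mu_j(X)\Bigr)^2
 &\leqslant\Bigl(\sum_{j}\frac{\mu_j(X)^2}{c_*(E\cap U_j)}\Bigr)\Bigl(\sum_{j}c_*(E\cap U_j)\Bigr)\\
 &\leqslant\Bigl(\sum_{j}I(\mu_j)\Bigr)\Bigl(\sum_{j}c_*(E\cap U_j)\Bigr)\leqslant I(\mu)\sum_{j}c_*(E\cap U_j).
\end{align*}
Dividing by $I(\mu)>0$ and passing to the supremum over $\mu\in\mathcal E^+(A)\setminus\{0\}$ then gives $c_*(A)\leqslant\sum_{j}c_*(E\cap U_j)$, as claimed.

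I do not expect any genuine obstacle here. The one point that calls for care is the bookkeeping of the second paragraph — verifying that each $\mu_j$ is a legitimate Radon measure concentrated on $E\cap U_j$, which is exactly the place where the universal measurability of the $U_j$ is invoked — together with the termwise handling of the series of potentials and energies, which is legitimate merely because the kernel and all the measures in play are nonnegative. The substantive mechanism is that disjointifying the $U_j$ replaces $\mu$ by a sum $\sum_{j}\mu_j$ of pieces concentrated on the pairwise disjoint sets $V_j$, whose total energy is superadditive (precisely because $\kappa\geqslant0$), after which the Cauchy--Schwarz inequality turns the per-piece capacity bounds into the asserted countable subadditivity.
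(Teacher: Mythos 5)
Your argument is correct. Note, however, that the paper does not actually prove this lemma from scratch: it disposes of it in one line by citing Fuglede's Lemma~2.3.5 (and the remark following it), which gives countable subadditivity of inner capacity over sets of the form $E\cap U_j$ with $U_j$ universally measurable, for pseudo-positive kernels. What you have written out is, in substance, the classical argument underlying that citation (going back to Cartan for the Newtonian kernel): disjointify the $U_j$, split $\mu\in\mathcal E^+\bigl(\bigcup_j E\cap U_j\bigr)$ into traces $\mu_j$ concentrated on $E\cap U_j$ (this is exactly where universal measurability is needed, as you say), use $\kappa\geqslant0$ to get $\sum_j I(\mu_j)\leqslant I(\mu)$, and finish with Cauchy--Schwarz against the per-piece bounds $\mu_j(X)^2\leqslant c_*(E\cap U_j)\,I(\mu_j)$. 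One small slip in the write-up: you claim only the inequality $c_*(B)\leqslant\sup_\nu\nu(X)^2/I(\nu)$ is needed, but you in fact use both directions of that identity --- the "$\leqslant$" direction to reduce the claim to bounding $\mu(X)^2/I(\mu)$, and the "$\geqslant$" direction (the one your parenthetical actually justifies, via inner regularity and $I(\nu|_L)\leqslant I(\nu)$) for the per-piece estimates. Both directions do hold by the arguments you indicate, so this is a matter of bookkeeping, not a gap.
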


\begin{proof}Since a strictly positive definite kernel is pseudo-positive, cf.\ \cite[p.~150]{F1}, the lemma follows directly from Fuglede \cite{F1} (see Lemma~2.3.5 and the remark after it). For the Newtonian kernel on $\mathbb R^n$, this goes back to Cartan \cite[p.~253]{Ca2}.\end{proof}

\begin{lemma}\label{l2}If a sequence $(\nu_j)\subset\mathcal E$ converges strongly to $\nu_0$, then there exists a subsequence $(\nu_{j_k})$ whose potentials converge to $U^{\nu_0}$ n.e.\ on $X$.\end{lemma}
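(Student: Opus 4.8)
The plan is to pass to a subsequence whose energy norms decay geometrically and then to control the superlevel sets of the potentials of the increments by a Chebyshev-type estimate linking energy and inner capacity; a Borel--Cantelli argument, together with the countable subadditivity of inner capacity (Lemma~\ref{str}), will then show that the set where the potentials fail to converge has inner capacity zero.

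The preliminary step would be to establish that for every $\mu\in\mathcal E$ and every $\lambda>0$,
\[
 c_*\bigl(\bigl\{x\in X:\ |U^\mu(x)|\geqslant\lambda\bigr\}\bigr)\leqslant\frac{2\|\mu\|^2}{\lambda^2}.
\]
Since $\mu^+,\mu^-\in\mathcal E^+$, the potentials $U^{\mu^\pm}$ are finite n.e., and the set $\{U^\mu\geqslant\lambda\}=\{U^{\mu^+}-U^{\mu^-}\geqslant\lambda\}$ (automatically contained in $\{U^{\mu^-}<\infty\}$) is Borel. Fix a compact $K\subset\{U^\mu\geqslant\lambda\}$ with $c(K)>0$ and let $\gamma_K\in\mathcal E^+(K)$ be its inner equilibrium measure, so that $\gamma_K(X)=\|\gamma_K\|^2=c(K)$ (see \cite{F1}). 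As $U^{\mu^\pm}$ are $\gamma_K$-integrable (Cauchy--Schwarz) and $U^\mu\geqslant\lambda$ on $K\supset S(\gamma_K)$, Fubini's theorem yields
\[
 \lambda\,c(K)=\lambda\,\gamma_K(X)\leqslant\int U^\mu\,d\gamma_K=\langle\gamma_K,\mu\rangle\leqslant\|\gamma_K\|\,\|\mu\|=\sqrt{c(K)}\,\|\mu\|,
\]
whence $c(K)\leqslant\|\mu\|^2/\lambda^2$; taking the supremum over such $K$ bounds $c_*(\{U^\mu\geqslant\lambda\})$, and applying this also to $-\mu$ and using Lemma~\ref{str} gives the claim.

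Now I would choose indices $j_1<j_2<\cdots$ with $\|\nu_{j_k}-\nu_0\|\leqslant2^{-k}$ and put $\mu_k:=\nu_{j_k}-\nu_0\in\mathcal E$, so $\|\mu_k\|^2\leqslant4^{-k}$. Off the inner-capacity-zero set where some of the potentials $U^{\nu_0},U^{\nu_{j_k}}$ ($k\in\mathbb N$) is infinite, one has $U^{\nu_{j_k}}-U^{\nu_0}=U^{\mu_k}$; hence the set $B$ where $U^{\nu_{j_k}}$ does not converge to $U^{\nu_0}$ is, up to inner capacity zero, contained in
\[
 \bigcup_{m\in\mathbb N}\ \bigcap_{N\in\mathbb N}\ \bigcup_{k\geqslant N}\ \bigl\{x\in X:\ |U^{\mu_k}(x)|\geqslant1/m\bigr\}.
\]
For each $m$ and each $N$, the preliminary estimate and Lemma~\ref{str} give $c_*\bigl(\bigcup_{k\geqslant N}\{|U^{\mu_k}|\geqslant1/m\}\bigr)\leqslant2m^2\sum_{k\geqslant N}\|\mu_k\|^2\leqslant2m^2\sum_{k\geqslant N}4^{-k}\to0$ as $N\to\infty$, so the intersection over $N$ has inner capacity zero; a final use of Lemma~\ref{str} over $m$ yields $c_*(B)=0$, i.e.\ $U^{\nu_{j_k}}\to U^{\nu_0}$ n.e.\ on $X$.

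The main difficulty is the preliminary estimate, and the reason it is delicate is that strong convergence $\nu_{j_k}\to\nu_0$ does not force the Jordan components $\nu_{j_k}^\pm$ to converge, so $\|\mu_k^\pm\|$ are not under control and one cannot argue componentwise or by telescoping over positive increments. What saves the argument is that the Chebyshev-type bound applies to the \emph{signed} increment $\mu_k$ directly---only $\mu_k\in\mathcal E$ is used (equivalently, $\mu_k^\pm\in\mathcal E^+$, needed solely for the integrability bookkeeping in the Fubini step)---so no passage to positive parts is required.
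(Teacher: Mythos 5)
Your proposal is correct, and it is essentially the paper's own argument made explicit: the paper disposes of this lemma by citing Fuglede \cite{F1} (the remark on p.~166 together with Lemma~2.3.5) and the fact that $U^\nu$ is well defined and finite n.e.\ for $\nu\in\mathcal E$, and the content of that citation is exactly the Chebyshev-type capacity estimate $c_*(\{|U^\mu|\geqslant\lambda\})\leqslant 2\|\mu\|^2/\lambda^2$ combined with a geometric subsequence and countable subadditivity of inner capacity, which is what you reconstruct. So the approach coincides; you have simply written out the proof the paper delegates to the reference.
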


\begin{proof}This follows from \cite{F1} (the remark on p.~166 and Lemma~2.3.5) by noting that $U^\nu$, where $\nu\in\mathcal E$, is well defined and finite n.e.\ on $X$, see \cite[p.~164]{F1}.\end{proof}

\subsection{Proof of Theorem~\ref{th1}} If $\omega$ is of finite energy, then (i)--(vi) were established (even in a much more general form) in \cite[Theorem~3.1]{Z-arx-22} and \cite[Theorem~3.1]{Z-arx}. The proof is thus reduced to the case of $\omega\in\mathfrak M^+$ satisfying (a)--(c).\footnote{It is worth emphasizing here that the investigation of this case requires substantially different approaches than those utilized in \cite{Z-arx1}--\cite{Z-arx} with $\omega\in\mathcal E^+$, cf.\ the second paragraph in Section~\ref{sec-intr1}.}

We begin by showing that (\ref{1}) holds true for some $\omega^A\in\mathcal E^+(A)$ if so does (\ref{eqpr3}), the opposite being clear from (\ref{upint}). To this end, we first note, by applying Lemma~\ref{l1} and the domination principle, that (\ref{1}) is equivalent to the two relations
\begin{align}\label{e-th1}
 U^{\omega^A}&\geqslant U^\omega\quad\text{n.e.\ on $A$},\\
U^{\omega^A}&=U^\omega\quad\text{$\omega^A$-a.e.\ on $X$}.\label{e-th1'}
\end{align}

Consider now $\omega^A\in\mathcal E^+(A)$ meeting (\ref{eqpr3}), and assume to the contrary that (\ref{e-th1}) fails. Then, there is compact $K\subset A$ such that $U^{\omega^A}<U^\omega$ on $K$ while $c(K)>0$. By making use of \cite{B2} (Section~IV.1, Proposition~10 and Section~IV.2, Theorem~1), we get
$\int U^{\omega^A}\,d\tau<\int U^\omega\,d\tau$ for any nonzero $\tau\in\mathcal E^+(K)$, which however contradicts (\ref{eqpr3}) for $\mu:=\tau$. Thus (\ref{e-th1}) does indeed hold; therefore, by virtue of Lemma~\ref{l1},
\begin{equation}\label{2}
 U^{\omega^A}\geqslant U^\omega\quad\text{$\omega^A$-a.e.\ on $X$}.
\end{equation}

To complete the proof of the above claim, assume that (\ref{e-th1'}) fails. Then, by (\ref{2}), there is compact $Q\subset A$ with $\omega^A(Q)>0$, such that $U^{\omega^A}>U^\omega$ on $Q$. Using (\ref{2}) once again, we conclude in exactly the same way as in the preceding paragraph that $\int U^{\omega^A}\,d\omega^A>\int U^\omega\,d\omega^A$, which is however impossible in view of (\ref{eqpr3}) with $\mu:=\omega^A$.

To verify the statement on the uniqueness in (i), assume (\ref{1}) holds for some $\nu,\nu'\in\mathcal E^+(A)$ in place of $\omega^A$. Then, because of Lemma~\ref{str}, $U^\nu=U^{\nu'}$ n.e.\ on $A$, hence $(\nu+\nu')$-a.e., and consequently everywhere on $X$, by the domination principle applied to each of $\nu,\nu'$. Therefore, by the energy principle, $\nu=\nu'$ (cf.\ \cite[Lemma~3.2.1]{F1}).

We next aim to show that the solution $\omega^A$ to the minimum $f$-weighted energy problem (\ref{hatw'}) exists if and only if there is the (unique) $\mu_0\in\mathcal E^+_f(A)$ satisfying (i), and then necessarily $\mu_0=\omega^A$.\footnote{In view of the above, this would necessarily imply that the solution to problem (\ref{hatw'}) is unique. The same can alternatively be verified by exploiting the convexity of the class $\mathcal E^+_f(A)$, the parallelogram identity in the pre-Hilbert space $\mathcal E$, and the energy principle (cf.\ \cite[Proof of Lemma~6]{Z5a}).}

Assume first that such $\omega^A$ exists. To check (\ref{e-th1}), suppose to the contrary that there is a compact set $K\subset A$ with $c(K)>0$, such that $U^{\omega^A}<U^\omega$ on $K$. A direct verification then shows that for any nonzero $\tau\in\mathcal E^+(K)$ and any $t\in(0,\infty)$,\footnote{In (\ref{eqpr4}) as well as in (\ref{eqpr4'}) and (\ref{eqpr4''}), we utilize \cite[Section~IV.4, Corollary~2 to Theorem~1]{B2}.}
\begin{equation}\label{eqpr4}
I_f(\omega^A+t\tau)-I_f(\omega^A)=2t\int\bigl(U^{\omega^A}-U^\omega\bigr)\,d\tau+t^2\|\tau\|^2.
\end{equation}
As $0<\|\tau\|<\infty$, the value on the right in (\ref{eqpr4}) (hence, also that on the left) is ${}<0$ when $t>0$ is small enough, which is however impossible, for $\omega^A+t\tau\in\mathcal E^+_f(A)$.

Having thus proved (\ref{e-th1}), we see, again by use of Lemma~\ref{l1}, that for this $\omega^A$,
\begin{equation}\label{eqpr5}
U^{\omega^A}\geqslant U^\omega\quad\text{$\omega^A$-a.e.\ on $X$.}
\end{equation}

Assuming now that (\ref{e-th1'}) fails, we deduce from (\ref{eqpr5}) that there exists a compact set $Q\subset A$ with $\omega^A(Q)>0$, such that $U^{\omega^A}>U^\omega$ on $Q$. Denoting $\upsilon:=\omega^A|_Q$, we have $\upsilon\in\mathcal E^+\setminus\{0\}$ as well as $\omega^A-t\upsilon\in\mathcal E^+_f(A)$ for all $t\in(0,1]$, and therefore
\begin{equation}\label{eqpr4'}0\leqslant I_f(\omega^A-t\upsilon)-I_f(\omega^A)=-2t\int\bigl(U^{\omega^A}-U^\omega\bigr)\,d\upsilon+t^2\|\upsilon\|^2,\end{equation}
which is however again impossible when $t>0$ is sufficiently small.

For the remaining "if" part of the above claim, assume that (\ref{1}) is fulfilled by some (unique) $\mu_0\in\mathcal E^+_f(A)$. To show that the same $\mu_0$ then necessarily serves as the solution $\omega^A$ to problem (\ref{hatw'}), we only need to verify that
\begin{equation}\label{eqpr6}
I_f(\mu)-I_f(\mu_0)\geqslant0\quad\text{for any $\mu\in\mathcal E^+_f(A)$}.
\end{equation}
Clearly,
\begin{equation}
  I_f(\mu)-I_f(\mu_0)=\|\mu-\mu_0\|^2+2\int U^{\mu_0}\,d(\mu-\mu_0)-2\int U^\omega\,d(\mu-\mu_0).\label{eqpr4''}
\end{equation}
By (\ref{1}) with $\mu_0$ in place of $\omega^A$, $U^{\mu_0}=U^\omega$ holds $(\mu+\mu_0)$-a.e.\ (Lemma~\ref{l1}); hence,
\[I_f(\mu)-I_f(\mu_0)=\|\mu-\mu_0\|^2\]
(see \cite[Section~IV.2, Proposition~6]{B2}), which yields (\ref{eqpr6}) by the energy principle.

Assertions (i), (iv) and their equivalence will therefore be established once we prove the statement on the solvability in (iv).

Assume first that $A:=K$ is compact. As $\widehat{w}_f(K)\leqslant0$ by (\ref{hatw'}) with $A:=K$, whereas $I_f(0)=0$, we may only consider {\it nonzero} $\mu\in\mathcal E^+(K)$.\footnote{Such $\mu$ do exist since, according to our convention, we only consider $K\in\mathfrak C_A$ with $c(K)>0$.} For each of those $\mu$, there exist $t\in(0,\infty)$ and $\tau\in\mathcal E^+(K)$ such that $\tau(X)=1$ and $\mu=t\tau$, and hence
\begin{equation*}
  I_f(\mu)=t^2\|\tau\|^2-2t\int U^\omega\,d\tau\geqslant t^2\bigl(c(K)^{-1}-2m_Kt^{-1}\bigr),
\end{equation*}
$m_K\in(0,\infty)$ being given by (\ref{ma}) with $A:=K$. Thus $I_f(\mu)>0$ for all $\mu\in\mathcal E^+(K)$ having the property
\[\mu(X)>2m_Kc(K)=:L_K\in(0,\infty),\]
and therefore $\widehat{w}_f(K)$ would be the same if the class $\mathcal E^+_f(K)$ in (\ref{hatw'}) were replaced by
\begin{equation}\label{lk}
 \mathcal E^+_{L_K}(K):=\bigl\{\mu\in\mathcal E^+(K):\ \mu(X)\leqslant L_K\bigr\}.
\end{equation}
That is,
\begin{equation}\label{wk}
\widehat{w}_f(K)=\inf_{\mu\in\mathcal E^+_{L_K}(K)}\,I_f(\mu)=:\widehat{w}_{f,L_K}(K),
\end{equation}
and consequently, by virtue of the strict positive definiteness of the kernel $\kappa$,
\begin{equation}\label{wk'}-\infty<-2m_KL_K\leqslant\widehat{w}_f(K)\leqslant0.\end{equation}

Choose a (minimizing) sequence $(\mu_j)\subset\mathcal E^+_{L_K}(K)$ such that
\[\lim_{j\to\infty}\,I_f(\mu_j)=\widehat{w}_{f,L_K}(K).\]
Being vaguely bounded in view of (\ref{lk}), $(\mu_j)$ is vaguely relatively compact \cite[Section~III.1, Proposition~15]{B2}, and hence it has a subsequence $(\mu_{j_k})$ converging vaguely to some $\mu_0\in\mathfrak M$.\footnote{Due to the second-countability of $X$, the vague topology on $\mathfrak M$ is first-countable (Section~\ref{sec-intr1}).} Actually, $\mu_0\in\mathfrak M^+(K)$, $\mathfrak M^+(K)$ being vaguely closed \cite[Section~III.2, Proposition~6]{B2}. As the mapping $\nu\mapsto I(\nu)$ is vaguely l.s.c.\ on $\mathfrak M^+$ \cite[Lemma~2.2.1(e)]{F1}, whereas the mapping $\nu\mapsto\int f\,d\nu$ is vaguely continuous on $\mathfrak M^+(K)$, the function $f|_K$ being continuous and of compact support, we thus have
\[\widehat{w}_f(K)\leqslant I_f(\mu_0)\leqslant\liminf_{k\to\infty}\,I_f(\mu_{j_k})=\widehat{w}_{f,L_K}(K),\]
which together with (\ref{wk}) proves that $\mu_0$ serves, indeed, as the (unique) solution $\omega^K$ to problem (\ref{hatw'}) with $A:=K$. Note that the same $\omega^K$ solves the problem of minimizing $I_f(\mu)$ over the (smaller) class $\mathcal E^+_{L_K}(K)$.

The existence of $\omega^K$ satisfying (iv), thereby established, implies that for compact $A:=K$, (i) and (iv) do indeed hold, and moreover they are equivalent.

To examine the case of noncompact $A$, we shall now show that the (finite) constant $L_K$, satisfying (\ref{wk}),
might be chosen to be independent of $K\in\mathfrak C_A$.

By (\ref{1}), $U^{\omega^K}=U^\omega$ n.e.\ on $K$, hence $\gamma_{\mathfrak S}$-a.e.\ (Lemma~\ref{l1}), where $\gamma_{\mathfrak S}$ denotes the capacitary measure on $\mathfrak S:=S(\omega^K)$, see \cite[Section~4.1]{F1}.\footnote{For any compact $Q\subset X$, $c(Q)<\infty$ by the energy principle, and hence $\gamma_Q$ does exist.} As $U^{\gamma_{\mathfrak S}}\geqslant1$ n.e.\ on $\mathfrak S$ \cite[Theorem~4.1]{F1}, hence $\omega^K$-a.e., Fubini's theorem \cite[Section~V.8, Theorem~1]{B2} gives
\[\omega^K(X)\leqslant\int U^{\gamma_{\mathfrak S}}\,d\omega^K=\int U^{\omega^K}\,d\gamma_{\mathfrak S}=\int U^\omega\,d\gamma_{\mathfrak S}=\int U^{\gamma_{\mathfrak S}}\,d\omega.\]
Since $U^{\gamma_{\mathfrak S}}\leqslant1$ on $S(\gamma_{\mathfrak S})$ \cite[Theorem~4.1]{F1}, applying Ugaheri's maximum principle yields $U^{\gamma_{\mathfrak S}}\leqslant M=M_{X,\kappa}$ on $X$ (see Section~\ref{sec-intr1}), and therefore
\begin{equation}\label{mass}
 \omega^K(X)\leqslant M\omega(X)=:L<\infty\quad\text{for all $K\in\mathfrak C_A$},
\end{equation}
the measure $\omega$ being bounded according to (a). Using (\ref{mass}), we obtain
\begin{align*}
\widehat{w}_{f,L}(K)\leqslant I_f(\omega^K)&=\widehat{w}_f(K)=\min_{\mu\in\mathcal E^+(K)}\,I_f(\mu)\\
{}&\leqslant\inf_{\mu\in\mathcal E^+_L(K)}\,I_f(\mu)=\widehat{w}_{f,L}(K)\quad\text{for every $K\in\mathfrak C_A$},
\end{align*}
and so $\omega^K$ solves the problem of minimizing $I_f(\mu)$ over the class $\mathcal E^+_L(K)$ as well. Also,
\begin{equation*}-\infty<-2m_AL\leqslant\widehat{w}_f(K)\leqslant0\quad\text{for all $K\in\mathfrak C_A$},\end{equation*}
cf.\ (\ref{wk'}), so that the net $\bigl(\widehat{w}_f(K)\bigr)_{K\in\mathfrak C_A}\subset\mathbb R$ is bounded. Being obviously decreasing, it must be convergent, hence Cauchy in $\mathbb R$.

For any $K,K'\in\mathfrak C_A$ such that $K\leqslant K'$,
\[(\omega^K+\omega^{K'})/2\in\mathcal E^+_L(K'),\]
hence
\[\|\omega^K+\omega^{K'}\|^2-4\int U^\omega\,d(\omega^K+\omega^{K'})\geqslant4\widehat{w}_{f,L}(K'),\]
and applying the parallelogram identity to $\omega^K,\omega^{K'}\in\mathcal E^+$ gives
\begin{equation}\label{Ca}
 \|\omega^K-\omega^{K'}\|^2\leqslant2I_f(\omega^K)-2I_f(\omega^{K'}).
\end{equation}

Being equal to $\bigl(\widehat{w}_f(K)\bigr)_{K\in\mathfrak C_A}$, the net $\bigl(I_f(\omega^K)\bigr)_{K\in\mathfrak C_A}$ is Cauchy in $\mathbb R$; therefore, in view of (\ref{Ca}), the net $(\omega^K)_{K\in\mathfrak C_A}$ is Cauchy in the strong topology on the cone $\mathcal E^+(A)$. The kernel $\kappa$ being perfect by the energy and consistency principles (Section~\ref{sec-intr1}), there exists the (unique) measure $\mu_0\in\mathcal E^+(A)$ such that
\begin{equation}\label{Conv}
 \omega^K\to\mu_0\quad\text{strongly (hence, vaguely) in $\mathcal E^+(A)$ as $K\uparrow A$},
\end{equation}
$\mathcal E^+(A)$ being strongly closed, hence strongly complete (Section~\ref{sec-intr1}).
Moreover, on account of (\ref{mass}) and the vague convergence of $(\omega^K)_{K\in\mathfrak C_A}$ to $\mu_0$, we have
\[\mu_0(X)\leqslant L<\infty,\]
the mapping $\mu\mapsto\mu(X)$ being vaguely l.s.c.\ on $\mathfrak M^+$ \cite[Section~IV.1, Proposition~4]{B2}, and so, actually,
$\mu_0\in\mathcal E_f^+(A)$.

We assert that the same $\mu_0$ serves as the solution $\omega^A$ to problem (\ref{hatw'}).
As shown above, this will follow once we verify (\ref{1}) for $\mu_0$ in place of $\omega^A$, which in turn is reduced to proving
\begin{equation}\label{KK}
 U^{\mu_0}=U^\omega\quad\text{n.e.\ on $K_0$},
\end{equation}
where $K_0\in\mathfrak C_A$ is arbitrarily given. In view of (\ref{Conv}) and the first countability of the strong topology on $\mathcal E$, there is a subsequence $(\omega^{K_j})_{j\in\mathbb N}$ of the net $(\omega^K)_{K\in\mathfrak C_A}$ such that
 \begin{equation*}
 \omega^{K_j}\to\mu_0\quad\text{strongly (hence, vaguely) in $\mathcal E^+(A)$ as $j\to\infty$},
\end{equation*}
and moreover $K_j\supset K_0$ for all $j$.\footnote{If the latter does not hold, we replace $\mathfrak C_A$ by its subset $\mathfrak C_A':=\{K\cup K_0:\ K\in\mathfrak C_A\}$ with the partial order relation inherited from $\mathfrak C_A$, and then apply to $\mathfrak C_A'$ the same arguments as just above.}
Passing if necessary to a subsequence once again, we can also assume, by making use of Lemma~\ref{l2}, that
\begin{equation}\label{JJ}U^{\mu_0}=\lim_{j\to\infty}\,U^{\omega^{K_j}}\quad\text{n.e.\ on $X$}.\end{equation}
Applying now (\ref{1}) to each of those $K_j$, $j\in\mathbb N$, and then letting $j\to\infty$, we get (\ref{KK}), hence (\ref{1}). (Here we have utilized (\ref{JJ}) and Lemma~\ref{str}.)\footnote{Here and in the next two paragraphs, the reference to Lemma~\ref{str} could be replaced by that to \cite[Lemma~2.3.5]{F1}, the sets in question being universally measurable.}

Substituting the equality $\mu_0=\omega^A$ thereby established into (\ref{Conv}) gives
\[
 \omega^K\to\omega^A\quad\text{strongly (hence, vaguely) in $\mathcal E^+(A)$ as $K\uparrow A$},
\]
hence (\ref{c1}) and (\ref{c2}), where $\omega^K$, resp.\ $\omega^A$, is uniquely determined for $K$, resp.\ $A$, by either of (i) or (iv). (As shown above, such $\omega^K$ and $\omega^A$ do exist.)

Moreover, if $K,K'\in\mathfrak C_A$ and $K\leqslant K'$, then, by virtue of (\ref{1}) and Lemma~\ref{str},
\[U^{\omega^K}=U^{\omega^{K'}}=U^{\omega^A}\quad\text{n.e.\ on $K$},\]
hence $\omega^K$-a.e.\ (Lemma~\ref{l1}). Applying the domination principle therefore yields
\[U^{\omega^K}\leqslant U^{\omega^{K'}}\leqslant U^{\omega^A}\quad\text{on $X$},\]
so that $U^{\omega^K}$ increases pointwise on $X$ as $K\uparrow A$, and does not exceed $U^{\omega^A}$. The proof of (\ref{c3}) is thus reduced to
\begin{equation*}
U^{\omega^A}\leqslant\lim_{K\uparrow A}\,U^{\omega^K}\quad\text{on $X$},
\end{equation*}
which however follows directly from (\ref{c2}) in view of the vague semicontinuity of the mapping $(x,\mu)\mapsto U^\mu(x)$ on $X\times\mathfrak M^+$ \cite[Lemma~2.2.1(b)]{F1}.

The proof of (iii) is finalized by verifying that the limit in (\ref{c3}) is unique, the uniqueness of that in (\ref{c1}) or (\ref{c2}) being obvious since the strong topology on $\mathcal E$ as well as the vague topology on $\mathfrak M$ is Hausdorff. Assume (\ref{c3}) holds for some two $\mu_1,\mu_2\in\mathcal E^+(A)$, and fix $K_0\in\mathfrak C_A$. Since the topology on $X$ is second-countable while the potential of a positive measure is l.s.c.\ on $X$, applying \cite[Appendix~VIII, Theorem]{Doob} shows that there is a subsequence $(U^{\omega^{K_j}})$ of the net $(U^{\omega^K})_{K\in\mathfrak C_A}$ such that
\begin{equation}\label{ap}
U^{\omega^{K_j}}\to U^{\mu_i}\quad\text{pointwise on $X$ as $j\to\infty$,}\quad i=1,2.
\end{equation}
Similarly as above, there is no loss of generality in assuming $K_0\subset K_j$ for all $j\in\mathbb N$. As $U^{\omega^{K_j}}=U^\omega$ n.e.\ on $K_j$, hence n.e.\ on $K_0$, we infer from (\ref{ap}), by making use of Lemma~\ref{str}, that
\[U^{\mu_i}=U^\omega\quad\text{n.e.\ on $K_0$}\quad\text{(hence n.e.\ on $A$),}\quad i=1,2.\] But, by virtue of (i), then necessarily $\mu_1=\mu_2$, as was to be proved.

As seen from the above, (iii) does indeed hold, and moreover $\omega^A$, which is uniquely determined by any one of (\ref{c1})--(\ref{c3}), must satisfy each of (i) and (iv).

To check that the same $\omega^A$ also meets (\ref{ii}), fix $\sigma\in\mathcal E^+$. Along with (\ref{1}),
\[U^{\sigma^A}=U^\sigma\quad\text{n.e.\ on $A$}\] (see \cite[Theorem~4.3]{Z-arx1}), hence $\omega^A$-a.e.\ (Lemma~\ref{l1}), and therefore
\[\int U^{\omega^A}\,d\sigma^A=\int U^\omega\,d\sigma^A,\quad\int U^{\sigma^A}\,d\omega^A=\int U^\sigma\,d\omega^A\]
(see \cite[Section~IV.2, Proposition~6]{B2}), which leads to (\ref{ii}) by means of subtraction.

Assertion (ii) as well as its equivalence to any one of (i), (iii), and (iv) will therefore be established once we show that (\ref{ii}) characterizes $\omega^A$ uniquely within $\mathcal E^+(A)$. Assume to the contrary that there exists $\xi\in\mathcal E^+(A)$ such that
\[\int U^\omega\,d\sigma^A=\int U^\xi\,d\sigma\quad\text{for all $\sigma\in\mathcal E^+$}.\]
Subtracting this from (\ref{ii}) gives $\int U^{\xi-\omega^A}\,d\sigma=0$ for all $\sigma\in\mathcal E^+$,
hence
\[\int U^{\xi-\omega^A}\,d(\xi-\omega^A)=0,\]
and consequently $\xi=\omega^A$, by the energy principle.

It is clear from (\ref{1}) that $\omega^A$ satisfying (i) belongs to $\Gamma_{A,\omega}$, the class $\Gamma_{A,\omega}$ being introduced by (\ref{gamma}). To show that the same $\omega^A$ also meets (v), it is thus enough to verify that for any $\nu\in\mathfrak M^+$, the inequality
\begin{equation}\label{1a}
U^\nu\geqslant U^\omega\quad\text{n.e.\ on $A$}
\end{equation}
necessarily yields
\begin{equation}\label{1b}U^\nu\geqslant U^{\omega^A}\quad\text{everywhere on $X$}.\end{equation}
But this is evident, for substituting (\ref{1}) into (\ref{1a}) gives $U^\nu\geqslant U^{\omega^A}$ n.e.\ on $A$, hence $\omega^A$-a.e., which results in (\ref{1b}) by the domination principle.

To complete the proof of (v), assume (\ref{minpot}) holds for $\mu_i\in\Gamma_{A,\omega}\cap\mathcal E^+$, $i=1,2$, in place of $\omega^A$. Then $U^{\mu_1}\geqslant U^{\mu_2}\geqslant U^{\mu_1}$ (thus, $U^{\mu_1}=U^{\mu_2}$) everywhere on $X$, cf.\ (\ref{1b}),  and so $\mu_1=\mu_2$, by the energy principle (cf.\ \cite[Lemma~3.2.1]{F1}).

We next verify that $\omega^A$, uniquely determined by any one of (i)--(v), fulfils (\ref{minen}) as well. To this end, we only need to show that $\|\mu\|\geqslant\|\omega^A\|$ for every $\mu\in\mathcal E^+$ having the property $U^\mu\geqslant U^\omega$ n.e.\ on $A$, or equivalently $U^\mu\geqslant U^{\omega^A}$ n.e.\ on $A$. Since then $U^\mu\geqslant U^{\omega^A}$ holds $\omega^A$-a.e., we get by integration $\langle\mu,\omega^A\rangle\geqslant\|\omega^A\|^2$, hence
\[\|\mu\|\cdot\|\omega^A\|\geqslant\|\omega^A\|^2\]
by the Cauchy--Schwarz inequality,
and dividing by $\|\omega^A\|>0$ yields the claim.

This finalizes the whole proof of Theorem~\ref{th1} by noting that the uniqueness of $\omega^A$, minimizing the energy over  $\Gamma_{A,\omega}\cap\mathcal E^+$, can be derived, as usual, from the convexity of this class by exploiting the pre-Hilbert structure on the space $\mathcal E$.

\subsection{Remark}\label{mass-est} Relation (\ref{mass}), established above for any $\omega\in\mathfrak M^+$ meeting (a)--(c), also holds true for any {\it bounded} $\omega\in\mathcal E^+$, which can be verified in exactly the same manner. Since the mapping $\mu\mapsto\mu(X)$ is vaguely l.s.c.\ on $\mathfrak M^+$, see \cite[Section~IV.1, Proposition~4]{B2}, in view of (\ref{c2}) we therefore obtain
\begin{equation}\label{emm}
 \omega^A(X)\leqslant M_{X,\kappa}\cdot\omega(X)\quad\text{whenever $\omega(X)<\infty$}.
\end{equation}

\subsection{Proof of Theorem~\ref{th1'}} A locally compact space is sec\-ond-count\-able if and only if it is metrizable and $\sigma$-com\-pact (Bourbaki \cite[Section~IX.2, Corollary to Proposition~16]{B3}). Being thus metrizable, the space $X$ is normal and even perfectly normal,\footnote{By Urysohn's theorem \cite[Section~IX.4, Theorem~1]{B3}, a Hausdorff topological space $Y$ is said to be {\it normal} if for any two disjoint closed subsets $F_1,F_2$ of $Y$, there exist disjoint open sets $D_1,D_2$ such that $F_i\subset D_i$ $(i=1,2)$. Further, a normal space $Y$ is said to be {\it perfectly normal} (see Bourbaki \cite[Exercise~7 to Section~IX.4]{B3}) if each closed subset of $Y$ is a countable intersection of open sets.} and hence Fuglede's theories of outer capacities, outer capacitary measures, and capacitability are applicable (see \cite[Sections~4.3--4.5]{F1}). Utilizing \cite[Theorem~4.5]{F1}, we therefore arrive at the following conclusion.

\begin{theorem}\label{l-top}Any Borel subset of a second-countable, locally compact space $X$, endowed with a perfect kernel $\kappa$, is capacitable.
\end{theorem}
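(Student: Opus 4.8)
The plan is to derive Theorem~\ref{l-top} from Choquet's capacitability theorem, in the form in which it is packaged in \cite[Theorem~4.5]{F1}. Two ingredients are needed: a purely topological one, namely that every Borel subset of $X$ belongs to the class of sets to which Choquet's theorem applies, and a potential-theoretic one, namely that the outer capacity $c^*$ is a genuine Choquet capacity on $X$ relative to the paving of compact sets. Both are available here: the space $X$ has already been shown to be metrizable and $\sigma$-compact (hence perfectly normal), and the kernel $\kappa$ is perfect by hypothesis.

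For the topological reduction I would first note that a locally compact, second-countable Hausdorff space is Polish (being metrizable, $\sigma$-compact, and Hausdorff), so that every closed subset of $X$ is a countable union of compacta and, more generally, every Borel subset of $X$ is $\mathcal K$-analytic, i.e.\ obtainable from the compact subsets of $X$ by the Suslin operation; this is exactly the range of validity of Choquet's theorem. Next, I would recall from \cite[Sections~2.3, 4.3--4.5]{F1} that, $\kappa$ being perfect, $c^*$ enjoys the three properties defining a capacity with respect to the class of compacta: (1) $c^*$ is non-decreasing; (2) $c^*$ is continuous from below on arbitrary increasing sequences, $c^*\bigl(\bigcup_{n}A_n\bigr)=\sup_{n}c^*(A_n)$ whenever $A_n\uparrow$, which rests on the definition of $c^*$ through open supersets together with countable subadditivity (Lemma~\ref{str} and its outer counterpart) and with the continuity from below of $c$ on open sets; and (3) $c$ is continuous from above on decreasing sequences of compacta, $c\bigl(\bigcap_{n}K_n\bigr)=\inf_{n}c(K_n)$ whenever $K_n\downarrow$, which is where perfectness genuinely enters, via the vague relative compactness and strong boundedness of the family of capacitary measures $(\gamma_{K_n})$ and the strong completeness of $\mathcal E^+$. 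All of this is subsumed in \cite[Theorem~4.5]{F1}, applicable because $X$ is perfectly normal.

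Finally, invoking Choquet's capacitability theorem in the form \cite[Theorem~4.5]{F1}, every $\mathcal K$-analytic subset $A$ of $X$ satisfies $c_*(A)=c^*(A)$, i.e.\ is capacitable; by the first step this applies in particular to every Borel $A\subset X$, which is the assertion of Theorem~\ref{l-top}. The only step that is not pure set-theoretic bookkeeping is item (3) above, continuity from above along decreasing sequences of compact sets, since it is there that the potential-theoretic structure (existence, vague semicontinuity, and strong compactness properties of capacitary measures for a perfect kernel) is used; but this is already established in \cite{F1} under the standing assumption that $\kappa$ is perfect, so here the proof amounts to assembling these cited facts together with the perfect normality of $X$.
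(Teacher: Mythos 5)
Your proposal is correct and follows essentially the same route as the paper: both establish that $X$, being second-countable and locally compact, is metrizable, $\sigma$-compact and perfectly normal, so that Fuglede's outer capacity and capacitability theory applies, and then invoke \cite[Theorem~4.5]{F1} to conclude that Borel (indeed $\mathcal K$-analytic) sets are capacitable. You merely unpack more of the Choquet-capacity machinery hidden inside that citation than the paper does.
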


If $A$ is Borel, then so is each of the exceptional sets $E$ appearing in Theorem~\ref{th1}. Applying Theorem~\ref{l-top} therefore gives $c^*(E)=c_*(E)=0$, and Theorem~\ref{th1'} follows.

\section{Further properties of inner and outer swept measures}\label{some}

Assuming throughout this section that $X$, $\kappa$, $\omega$, and $A$ are as required in Theorem~\ref{th1}, we first analyze some further properties of the inner balayage $\omega^A$.

\begin{proposition}\label{bal-pot} We have
\begin{align}\label{in-pot}
&U^{\omega^A}\leqslant U^\omega\quad\text{on all of $X$},\\
&I(\omega^A)=I(\omega^A,\omega)\leqslant I(\omega),\label{in-en}\\
&\omega^A(X)\leqslant M_{X,\kappa}\cdot\omega(X).\label{emm'}\end{align}
\end{proposition}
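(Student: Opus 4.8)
The plan is to derive all three estimates from the single identity $U^{\omega^A}=U^\omega$ n.e.\ on $A$ furnished by Theorem~\ref{th1}(i), combined with the domination principle and Fubini's theorem; essentially no new machinery is required. First, for \eqref{in-pot}: since $\omega^A\in\mathcal E^+(A)$, the observation recorded right after Lemma~\ref{l1} turns \eqref{1} into $U^{\omega^A}=U^\omega$ $\omega^A$-a.e.\ on $X$, whence in particular $U^{\omega^A}\leqslant U^\omega$ $\omega^A$-a.e. As $\omega^A\in\mathcal E^+$ while $\omega\in\mathfrak M^+$, the domination principle (postulated throughout) upgrades this inequality to all of $X$, giving \eqref{in-pot}.

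For \eqref{in-en}, the symmetry and nonnegativity of $\kappa$ permit Fubini's theorem \cite[Section~V.8, Theorem~1]{B2} to be applied, yielding $I(\omega^A,\omega)=\int U^\omega\,d\omega^A=\int U^{\omega^A}\,d\omega$. Using once more that $U^{\omega^A}=U^\omega$ $\omega^A$-a.e., I obtain $\int U^\omega\,d\omega^A=\int U^{\omega^A}\,d\omega^A=I(\omega^A)$; since $\omega^A\in\mathcal E^+$, this common value is a finite number, so all the mutual-energy integrals occurring here are finite (when $\omega\in\mathcal E^+$ one may instead invoke the Cauchy--Schwarz inequality). This establishes $I(\omega^A)=I(\omega^A,\omega)$. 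Integrating \eqref{in-pot} against $\omega$ then gives $I(\omega^A,\omega)=\int U^{\omega^A}\,d\omega\leqslant\int U^\omega\,d\omega=I(\omega)$, which is trivially true when $I(\omega)=\infty$.

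Finally, \eqref{emm'} is immediate: in the case $\omega(X)<\infty$ (which in particular covers (a)) it is exactly \eqref{emm} established in Section~\ref{mass-est}, while if $\omega\in\mathcal E^+$ with $\omega(X)=\infty$ the asserted inequality holds trivially. I do not anticipate a genuine difficulty in this proposition; the only points needing attention are the passage from "n.e.\ on $A$" to "$\omega^A$-a.e.\ on $X$" — legitimate precisely because $\omega^A$ is concentrated on $A$ and of finite energy, cf.\ Lemma~\ref{l1} — and checking that $\int U^\omega\,d\omega^A$ is a finite number, which is automatic since it equals $I(\omega^A)<\infty$.
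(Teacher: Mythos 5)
Your proof is correct and rests on the same machinery as the paper's: the $\omega^A$-a.e.\ identity $U^{\omega^A}=U^\omega$ via Lemma~\ref{l1}, the domination principle, Fubini/(\ref{eqpr3}), and the bound (\ref{emm}) for the mass estimate. The only difference is presentational: the paper disposes of (\ref{in-pot}) and $I(\omega^A)\leqslant I(\omega)$ in one line by observing $\omega\in\Gamma_{A,\omega}$ and citing Theorem~\ref{th1}(v) and (vi), whereas you re-derive exactly those special cases directly (domination principle for the pointwise inequality, then integration against $\omega$ for the energy inequality), which is equally valid.
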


\begin{proof}Since, clearly, $\omega\in\Gamma_{A,\omega}$, (\ref{in-pot}) and $I(\omega^A)\leqslant I(\omega)$ are implied by Theorem~\ref{th1} (see (v) and (vi), respectively). Applying (\ref{eqpr3}) with $\mu:=\omega^A\in\mathcal E^+(A)$, we obtain the equality in (\ref{in-en}). As for (\ref{emm'}), it only needs to be verified in the case $\omega(X)<\infty$, which has already been done in (\ref{emm}).\end{proof}

\begin{proposition}\label{bal-rest}For any $A'\subset A$ with $c_*(A')>0$ and strongly closed $\mathcal E^+(A')$,\footnote{As in Landkof \cite[p.~264]{L}, (\ref{rest}) might be referred to as "{\it the inner balayage with a rest}".}
\begin{equation}\label{rest}
\omega^{A'}=(\omega^A)^{A'},\end{equation}
and therefore
\begin{equation}\label{rest'}U^{\omega^{A'}}\leqslant U^{\omega^A}\quad\text{on $X$}.\end{equation}
\end{proposition}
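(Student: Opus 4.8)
\textbf{Proof proposal for Proposition~\ref{bal-rest}.} The plan is to reduce the identity $\omega^{A'}=(\omega^A)^{A'}$ to the uniqueness statement in Theorem~\ref{th1}(i), applied to the set $A'$ with $\omega$ replaced by $\omega^A$. For this I must first check that $\omega^A$ is an admissible ``initial'' measure for balayage onto $A'$: either $\omega^A\in\mathcal E^+$, or the analogue of (a)--(c) holds. In fact the situation is simpler, since $\omega^A\in\mathcal E^+(A)\subset\mathcal E^+$ by Theorem~\ref{th1} regardless of whether $\omega$ itself has finite energy; hence the finite-energy branch of the hypotheses in Section~\ref{sec-intr2} applies to $\omega^A$, and $(\omega^A)^{A'}$ is well defined and is the unique measure in $\mathcal E^+(A')$ with $U^{(\omega^A)^{A'}}=U^{\omega^A}$ n.e.\ on $A'$. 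Likewise $\omega^{A'}$ is well defined because $\omega$, $A'$ satisfy the standing hypotheses: indeed $c_*(A')>0$ and $\mathcal E^+(A')$ is strongly closed by assumption, while conditions (a)--(c) for $(A,\omega)$ imply the corresponding conditions for $(A',\omega)$ since $A'\subset A$ (every compact $K\subset A'$ is a compact subset of $A$, and $\sup_{A'}U^\omega\leqslant\sup_A U^\omega=m_A$).

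Next I would exhibit $\omega^{A'}$ as a competitor for the defining property of $(\omega^A)^{A'}$. By Theorem~\ref{th1}(i) applied to $(A',\omega)$ we have $\omega^{A'}\in\mathcal E^+(A')$ and $U^{\omega^{A'}}=U^\omega$ n.e.\ on $A'$. On the other hand, Theorem~\ref{th1}(i) applied to $(A,\omega)$ gives $U^{\omega^A}=U^\omega$ n.e.\ on $A$, hence in particular n.e.\ on $A'\subset A$. Combining these two relations off an exceptional set of inner capacity zero (using Lemma~\ref{str} to control the union of the two exceptional sets) yields
\begin{equation*}
U^{\omega^{A'}}=U^{\omega^A}\quad\text{n.e.\ on $A'$}.
\end{equation*}
Since $\omega^{A'}\in\mathcal E^+(A')$, the uniqueness clause in Theorem~\ref{th1}(i), now applied with initial measure $\omega^A\in\mathcal E^+$ and target set $A'$, forces $\omega^{A'}=(\omega^A)^{A'}$, which is \eqref{rest}.

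Finally, \eqref{rest'} is immediate: applying \eqref{in-pot} of Proposition~\ref{bal-pot} with $\omega$ replaced by $\omega^A$ and $A$ by $A'$ gives $U^{(\omega^A)^{A'}}\leqslant U^{\omega^A}$ on all of $X$, and substituting \eqref{rest} turns this into $U^{\omega^{A'}}\leqslant U^{\omega^A}$ on $X$. I do not anticipate a genuine obstacle here; the only point requiring a little care is the bookkeeping of exceptional sets — one must ensure that ``n.e.\ on $A$'' really does restrict to ``n.e.\ on $A'$'' (clear, since a subset of a set of inner capacity zero has inner capacity zero) and that the two n.e.\ statements can be added, which is exactly what Lemma~\ref{str} (or \cite[Lemma~2.3.5]{F1}) provides. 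A secondary point worth a sentence is the verification that $(A',\omega)$ inherits conditions (a)--(c) from $(A,\omega)$, together with the observation that $\omega^A$ itself always lies in $\mathcal E^+$, so that invoking Theorem~\ref{th1} for the pair $(A',\omega^A)$ is legitimate.
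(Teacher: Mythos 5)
Your proposal is correct and follows essentially the same route as the paper: verify that both $\omega^{A'}$ and $(\omega^A)^{A'}$ exist and lie in $\mathcal E^+(A')$ (the latter because $\omega^A\in\mathcal E^+$), establish the chain $U^{(\omega^A)^{A'}}=U^{\omega^A}=U^\omega=U^{\omega^{A'}}$ n.e.\ on $A'$ via Theorem~\ref{th1}(i) and Lemma~\ref{str}, and conclude by the uniqueness clause of Theorem~\ref{th1}(i); the deduction of (\ref{rest'}) from (\ref{in-pot}) applied to $\omega:=\omega^A$ is likewise the paper's argument. No gaps.
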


\begin{proof}
Unless $\omega\in\mathcal E^+$, $U^\omega$ is bounded on $A'$, whereas $U^\omega|_K$ is continuous for every $K\in\mathfrak C_{A'}$, cf.\ (b)--(c); therefore, $\omega^{A'}$ does exist, and moreover it belongs to $\mathcal E^+(A')$ (Theorem~\ref{th1}). The same conclusion holds true for $(\omega^A)^{A'}$, $\omega^A$ being of finite energy (Theorem~\ref{th1}). Noting that, by virtue of (\ref{1}) and Lemma~\ref{str},
\[U^{(\omega^A)^{A'}}=U^{\omega^A}=U^\omega=U^{\omega^{A'}}\quad\text{n.e.\ on $A'$},\]
we arrive at (\ref{rest}) in view of the statement on the uniqueness in Theorem~\ref{th1}(i).

On account of (\ref{rest}), (\ref{in-pot}) applied to $\omega:=\omega^A\in\mathcal E^+$ gives (\ref{rest'}).\footnote{This can also be derived from the inclusion $\mathfrak C_{A'}\subset\mathfrak C_A$ by noting that, because of (\ref{c3}),
\[U^{\omega^A}(x)=\sup_{K\in\mathfrak C_A}\,U^{\omega^K}(x)\quad\text{for all $x\in X$}.\vspace{-4mm}\]}
\end{proof}

\begin{proposition}\label{bal-tot-m} If Frostman's maximum principle holds, then
the inner balayage $\omega^A$ is of minimum total mass in the class $\Gamma_{A,\omega}\cap\mathcal E^+$, i.e.
\begin{equation}\label{eq-t-m}\omega^A(X)=\min_{\mu\in\Gamma_{A,\omega}\cap\mathcal E^+}\,\mu(X).\end{equation}
\end{proposition}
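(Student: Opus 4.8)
The plan is to show that if $\mu\in\Gamma_{A,\omega}\cap\mathcal E^+$, then $\mu(X)\geqslant\omega^A(X)$, since the reverse inequality is trivial because $\omega^A$ itself belongs to $\Gamma_{A,\omega}\cap\mathcal E^+$ by Theorem~\ref{th1}(v). The natural tool is a capacitary-measure argument of the same flavour as the one already used in the proof of Theorem~\ref{th1} to establish the mass bound (\ref{mass}). First I would exhaust $A$ by its compact subsets: for each $K\in\mathfrak C_A$ with $c(K)>0$, let $\gamma_K$ be the capacitary measure on $K$, so that $U^{\gamma_K}\geqslant1$ n.e.\ on $K$ (hence $\omega^K$-a.e.\ by Lemma~\ref{l1}) and, by Frostman's maximum principle, $U^{\gamma_K}\leqslant1$ everywhere on $X$ (here is where the hypothesis $M=1$ enters). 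Using Fubini's theorem together with (\ref{1}) for $\omega^K$, one gets
\begin{equation*}
\omega^K(X)\leqslant\int U^{\gamma_K}\,d\omega^K=\int U^{\omega^K}\,d\gamma_K=\int U^\omega\,d\gamma_K=\int U^{\gamma_K}\,d\omega\leqslant\omega(X).
\end{equation*}

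Now bring in a competitor $\mu\in\Gamma_{A,\omega}\cap\mathcal E^+$. Since $U^\mu\geqslant U^\omega$ n.e.\ on $A$, in particular n.e.\ on $K$, and $U^{\omega^K}=U^\omega$ n.e.\ on $K$, we have $U^\mu\geqslant U^{\omega^K}$ n.e.\ on $K$, hence $\gamma_K$-a.e.\ and also $\omega^K$-a.e. Instead of comparing against $\omega$, I would now repeat the Fubini computation with $\mu$ playing the role to be controlled: using $U^{\gamma_K}\geqslant1$ $\omega^K$-a.e.\ and then $U^{\omega^K}\leqslant U^{\mu}$ $\gamma_K$-a.e.,
\begin{equation*}
\omega^K(X)\leqslant\int U^{\gamma_K}\,d\omega^K=\int U^{\omega^K}\,d\gamma_K\leqslant\int U^{\mu}\,d\gamma_K=\int U^{\gamma_K}\,d\mu\leqslant\mu(X),
\end{equation*}
the last inequality again by Frostman's principle $U^{\gamma_K}\leqslant1$ on $X$. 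Thus $\omega^K(X)\leqslant\mu(X)$ for every $K\in\mathfrak C_A$.

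Finally I pass to the limit along $K\uparrow A$. By Theorem~\ref{th1}(iii), $\omega^K\to\omega^A$ vaguely in $\mathfrak M^+$ as $K\uparrow A$, and the mapping $\nu\mapsto\nu(X)$ is vaguely lower semicontinuous on $\mathfrak M^+$ (Bourbaki \cite[Section~IV.1, Proposition~4]{B2}); hence
\begin{equation*}
\omega^A(X)\leqslant\liminf_{K\uparrow A}\,\omega^K(X)\leqslant\mu(X).
\end{equation*}
Taking the infimum over $\mu\in\Gamma_{A,\omega}\cap\mathcal E^+$ and combining with $\omega^A\in\Gamma_{A,\omega}\cap\mathcal E^+$ yields (\ref{eq-t-m}). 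The only delicate point is the double application of Frostman's maximum principle to $U^{\gamma_K}$ — both $U^{\gamma_K}\leqslant1$ on all of $X$ and the passage from an n.e.\ inequality on $K$ to a $\gamma_K$-a.e.\ one via Lemma~\ref{l1} — but these are exactly the steps already carried out in the derivation of (\ref{mass}), so no genuinely new obstacle arises; one just has to be careful that the competitor $\mu$ need not be concentrated on $A$, which is why the inequality $U^{\gamma_K}\leqslant1$ everywhere on $X$ (rather than merely on $K$ or on $S(\mu)$) is essential.
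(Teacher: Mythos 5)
Your proof is correct, but it takes a genuinely different route from the paper's. The paper disposes of the proposition in two lines: since $\mu\in\Gamma_{A,\omega}\cap\mathcal E^+$ implies $U^{\omega^A}\leqslant U^\mu$ \emph{everywhere} on $X$ by Theorem~\ref{th1}(v), the mass inequality $\omega^A(X)\leqslant\mu(X)$ follows at once from Deny's principle of positivity of mass as stated in \cite[Theorem~2.1]{Z-arx-22} (which is where Frostman's principle is consumed). You instead bypass both the global potential comparison and the citation, and in effect inline a proof of the relevant instance of Deny's principle: exhausting $A$ by compacts, comparing $\omega^K$ and $\mu$ through the capacitary measure $\gamma_K$ via the Fubini chain
\begin{equation*}
\omega^K(X)\leqslant\int U^{\gamma_K}\,d\omega^K=\int U^{\omega^K}\,d\gamma_K\leqslant\int U^{\mu}\,d\gamma_K=\int U^{\gamma_K}\,d\mu\leqslant\mu(X),
\end{equation*}
and then passing to the limit using the vague convergence $\omega^K\to\omega^A$ and the vague lower semicontinuity of the total mass. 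All the individual steps are sound: the passage from n.e.\ inequalities on $K$ to $\gamma_K$-a.e.\ and $\omega^K$-a.e.\ ones is justified by Lemmas~\ref{l1} and \ref{str}, the identity $\int U^{\gamma_K}\,d\omega^K=\int U^{\omega^K}\,d\gamma_K$ is legitimate Fubini--Tonelli for positive measures and a positive l.s.c.\ kernel, and you correctly isolate the two places where $M=1$ is indispensable. What your approach buys is self-containedness (no appeal to \cite{Z-arx-22}) and the observation that only the n.e.\ comparison on compact subsets of $A$ is needed, not the everywhere inequality of Theorem~\ref{th1}(v); what it costs is length, and a duplication of the computation already carried out for (\ref{mass}) — which is presumably why the paper prefers to quote Deny's principle instead.
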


\begin{proof} Since, by Theorem~\ref{th1}(i), $\omega^A\in\Gamma_{A,\omega}\cap\mathcal E^+$, we are reduced to showing that
\begin{equation*}\omega^A(X)\leqslant\mu(X)\quad\text{for any $\mu\in\Gamma_{A,\omega}\cap\mathcal E^+$.}\end{equation*}
But for such $\mu$, according to Theorem~\ref{th1}(v),
\[U^{\omega^A}\leqslant U^\mu\quad\text{everywhere on $X$},\]
and the claimed inequality follows immediately from Deny's principle of positivity of mass in the form stated in \cite[Theorem~2.1]{Z-arx-22}.
\end{proof}

\begin{remark}\label{t-m-nonun} The extremal property (\ref{eq-t-m}) cannot, however, serve as an alternative characterization of the inner balayage $\omega^A$, for it does not determine $\omega^A$ uniquely. Indeed, consider the $\alpha$-Riesz kernel $|x-y|^{\alpha-n}$ of order $\alpha\leqslant2$, $\alpha<n$, on $\mathbb R^n$, $n\geqslant2$, a closed proper subset $A$ of $\mathbb R^n$ that is {\it not $\alpha$-thin at infinity},\footnote{By Kurokawa and Mizuta \cite{KM}, a set $Q\subset\mathbb R^n$ is said to be {\it inner $\alpha$-thin at infinity} if
\begin{equation*}
 \sum_{j\in\mathbb N}\,\frac{c_*(Q_j)}{q^{j(n-\alpha)}}<\infty,
 \end{equation*}
where $q\in(1,\infty)$ and $Q_j:=Q\cap\{x\in\mathbb R^n:\ q^j\leqslant|x|<q^{j+1}\}$. See also \cite[Section~2]{Z-bal2}.} and a nonzero bounded measure $\omega\in\mathcal E^+$ with $S(\omega)\subset A^c$. Then
\begin{equation}\label{eq-t-m1}\omega^A\ne\omega\quad\text{and}\quad\omega^A(\mathbb R^n)=\omega(\mathbb R^n),\end{equation}
the former being obvious e.g.\ from $\omega^A\in\mathcal E^+(A)$, and the latter from \cite[Corollary~5.3]{Z-bal2}.
Noting that $\omega,\omega^A\in\Gamma_{A,\omega}\cap\mathcal E^+$, we conclude by combining (\ref{eq-t-m}) with (\ref{eq-t-m1}) that there are actually infinitely many measures of minimum total mass in $\Gamma_{A,\omega}\cap\mathcal E^+$, for so is every measure of the form $a\omega+b\omega^A$, where $a,b\in[0,1]$ and $a+b=1$.
\end{remark}

\begin{proposition}\label{bal-tm}If $c_*(A)<\infty$ and Frostman's maximum principle holds, then
\begin{equation*}
\omega^A(X)=\int U^\omega\,d\gamma_A,
\end{equation*}
$\gamma_A$ being the inner equilibrium measure on the set $A$, normalized by $\gamma_A(X)=c_*(A)$.
\end{proposition}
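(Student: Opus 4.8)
The plan is to route both sides of the claimed identity through the potential $U^{\gamma_A}$ of the inner equilibrium measure, and to use that, under Frostman's maximum principle, $U^{\gamma_A}$ is pinned between $1$ (from below, nearly everywhere on $A$) and $1$ (from above, everywhere on $X$).

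First I would record what is available about $\gamma_A$ when $c_*(A)<\infty$: obtained as the strong limit of the capacitary measures $\gamma_K$ as $K\uparrow A$ (cf.\ \cite[Section~4.1]{F1}), it belongs to the strongly closed cone $\mathcal E^+(A)$; it satisfies $\gamma_A(X)=c_*(A)$, $U^{\gamma_A}\geqslant1$ n.e.\ on $A$, and $U^{\gamma_A}\leqslant1$ $\gamma_A$-a.e.\ on $X$. Applying Frostman's maximum principle to $\gamma_A\in\mathcal E^+$ then upgrades the last relation to $U^{\gamma_A}\leqslant1$ on all of $X$.

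Next I would chain the following equalities. Since $U^{\omega^A}=U^\omega$ n.e.\ on $A$ by Theorem~\ref{th1}(i) and $\gamma_A\in\mathcal E^+(A)$, Lemma~\ref{l1} gives $U^{\omega^A}=U^\omega$ $\gamma_A$-a.e., whence $\int U^\omega\,d\gamma_A=\int U^{\omega^A}\,d\gamma_A$; an application of Fubini's theorem \cite[Section~V.8, Theorem~1]{B2} together with the symmetry of $\kappa$ rewrites the right-hand side as $\int U^{\gamma_A}\,d\omega^A$. Finally, because $\omega^A\in\mathcal E^+(A)$, Lemma~\ref{l1} yields $U^{\gamma_A}\geqslant1$ $\omega^A$-a.e., so $\int U^{\gamma_A}\,d\omega^A\geqslant\omega^A(X)$, while the pointwise bound $U^{\gamma_A}\leqslant1$ gives the reverse inequality; hence $\int U^{\gamma_A}\,d\omega^A=\omega^A(X)$, and the claim follows. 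As a byproduct, $\omega^A(X)<\infty$: in the finite-energy case $\int U^\omega\,d\gamma_A=I(\omega,\gamma_A)<\infty$ by the Cauchy--Schwarz inequality, and in the case (a)--(c) it is at most $m_A\,c_*(A)<\infty$ by hypothesis (c).

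There is no serious obstacle here; the proof is short once the properties of $\gamma_A$ are assembled. The one point that genuinely uses the extra hypothesis is the passage from $U^{\gamma_A}\leqslant1$ $\gamma_A$-a.e.\ to $U^{\gamma_A}\leqslant1$ everywhere on $X$: without Frostman's maximum principle (only $M$-Ugaheri's) one would merely obtain the two-sided estimate $\omega^A(X)\leqslant\int U^\omega\,d\gamma_A\leqslant M_{X,\kappa}\,\omega^A(X)$ rather than an equality. A minor care is to read all the integrals above in $[0,\infty]$ so that the Fubini step and the two inequalities remain legitimate irrespective of boundedness considerations.
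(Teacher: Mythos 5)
Your proposal is correct and follows essentially the same route as the paper: both pass through the chain $\omega^A(X)=\int U^{\gamma_A}\,d\omega^A=\int U^{\omega^A}\,d\gamma_A=\int U^\omega\,d\gamma_A$ using $\gamma_A\in\mathcal E^+(A)$, Lemma~\ref{l1}, Theorem~\ref{th1}(i), and Fubini's theorem. The only cosmetic difference is that the paper invokes the single relation $U^{\gamma_A}=1$ n.e.\ on $A$ (hence $\omega^A$-a.e.), whereas you derive the two one-sided bounds separately; these are equivalent under Frostman's maximum principle.
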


\begin{proof} As $\mathcal E^+(A)$ is strongly closed, $\gamma_A\in\mathcal E^+(A)$ \cite[Theorem~7.2]{Z-arx-22}; and moreover, by Frostman's maximum principle, $U^{\gamma_A}=1$ n.e.\ on $A$ \cite[Section~4.1]{F1}. On account of Lemma~\ref{l1} and Theorem~\ref{th1}(i), we thus have $U^{\gamma_A}=1$ $\omega^A$-a.e.\ on $X$, whereas $U^{\omega^A}=U^\omega$ $\gamma_A$-a.e.\ on $X$. Therefore, by Fubini's theorem \cite[Section~V.8, Theorem~1]{B2},
\[\omega^A(X)=\int 1\,d\omega^A=\int U^{\gamma_A}\,d\omega^A=\int U^{\omega^A}\,d\gamma_A=\int U^\omega\,d\gamma_A\]
as claimed.
\end{proof}

\begin{proposition}\label{bal-mon}Consider a decreasing sequence $(A_j)_{j\in\mathbb N}$ of quasiclosed sets with the intersection $A$ of nonzero inner capacity, and a measure $\omega\in\mathfrak M^+$, $\omega\ne0$, such that either $\omega\in\mathcal E^+$, or  {\rm(a)--(c)} with $A_1$ in place of $A$ hold true.\footnote{See the beginning of Section~\ref{sec-intr2}.} Then
\begin{align}\label{mon1}
  \omega^{A_j}&\to\omega^A\quad\text{strongly and vaguely in $\mathcal E^+$},\\
  U^{\omega^{A_j}}&\downarrow U^{\omega^A}\quad\text{pointwise q.e.\ on $X$}.\label{mon2}
  \end{align}
  \end{proposition}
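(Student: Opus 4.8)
The plan is to deduce both assertions from the ``inner balayage with a rest'' identity of Proposition~\ref{bal-rest} together with the description of the limiting cone $\mathcal E^+(A)=\bigcap_{j\in\mathbb N}\mathcal E^+(A_j)$, via a Cauchy-sequence argument in the energy norm analogous to the one for the limit relations in Theorem~\ref{th1}(iii). First I would verify that all the measures in question exist: since each $A_j$ is quasiclosed, $\mathcal E^+(A_j)$ is strongly closed (Section~\ref{sec-intr1}), we have $c_*(A_j)\geqslant c_*(A)>0$, and --- in the case $\omega\notin\mathcal E^+$ --- conditions (a)--(c) for $A_1$ descend to $A_j$ (because $A_j\subset A_1$), so $\omega^{A_j}$ exists by Theorem~\ref{th1}. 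Next, a positive measure is concentrated on $A=\bigcap_jA_j$ if and only if it is concentrated on every $A_j$ (a countable union of negligible sets being negligible), whence $\mathcal E^+(A)=\bigcap_j\mathcal E^+(A_j)$; being an intersection of strongly closed cones this is strongly closed, and since (a)--(c) descend to $A$ as well, $\omega^A$ exists by Theorem~\ref{th1}.

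I would then establish the strong convergence in (\ref{mon1}). Fix $j\leqslant k$, so that $A_k\subset A_j$. By Proposition~\ref{bal-rest} (applied with $A_j$ playing the role of ``$A$''), $\omega^{A_k}=(\omega^{A_j})^{A_k}$, and hence, by (\ref{1}) for the inner balayage of the finite-energy measure $\omega^{A_j}$ onto $A_k$ followed by Lemma~\ref{l1}, $U^{\omega^{A_k}}=U^{\omega^{A_j}}$ holds $\omega^{A_k}$-a.e.; integrating against $\omega^{A_k}$ gives $I(\omega^{A_k})=I(\omega^{A_j},\omega^{A_k})$, and therefore
\[\|\omega^{A_j}-\omega^{A_k}\|^2=\|\omega^{A_j}\|^2-2I(\omega^{A_j},\omega^{A_k})+\|\omega^{A_k}\|^2=\|\omega^{A_j}\|^2-\|\omega^{A_k}\|^2\qquad(j\leqslant k).\]
In particular the nonnegative sequence $\bigl(\|\omega^{A_j}\|\bigr)$ is decreasing, hence convergent (note $\|\omega^{A_1}\|<\infty$, so no assumption on the finiteness of $\|\omega\|$ is needed), and then the displayed identity shows that $(\omega^{A_j})$ is strongly Cauchy. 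The kernel being perfect, $\omega^{A_j}\to\mu_0$ strongly and vaguely for some $\mu_0\in\mathcal E^+$; since each $\mathcal E^+(A_j)$ is strongly closed and contains $\omega^{A_k}$ for all $k\geqslant j$, we get $\mu_0\in\bigcap_j\mathcal E^+(A_j)=\mathcal E^+(A)$. To identify $\mu_0$ with $\omega^A$, I would pick by Lemma~\ref{l2} a subsequence along which the potentials converge to $U^{\mu_0}$ n.e.\ on $X$; since $U^{\omega^{A_j}}=U^\omega$ n.e.\ on $A_j\supset A$ for every $j$, and a countable union of sets of inner capacity zero has inner capacity zero (Lemma~\ref{str}), passing to the limit gives $U^{\mu_0}=U^\omega$ n.e.\ on $A$, so $\mu_0=\omega^A$ by the uniqueness in Theorem~\ref{th1}(i). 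This proves (\ref{mon1}).

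For (\ref{mon2}), I would first use (\ref{rest'}) (again with $A_j$ in the role of ``$A$'', and $A_k$, resp.\ $A$, in the role of ``$A'$'') to get $U^{\omega^{A_k}}\leqslant U^{\omega^{A_j}}$ and $U^{\omega^A}\leqslant U^{\omega^{A_j}}$ on $X$ for all $k\geqslant j$. Thus $u:=\inf_jU^{\omega^{A_j}}=\lim_jU^{\omega^{A_j}}$ is a Borel function (potentials of positive measures being l.s.c.) satisfying $u\geqslant U^{\omega^A}$ everywhere. On the other hand, applying Lemma~\ref{l2} to the strongly convergent sequence from the previous paragraph produces a subsequence of $(U^{\omega^{A_j}})$ tending to $U^{\omega^A}$ n.e.\ on $X$; a monotone sequence that has a subsequential limit converges to it, so $u=U^{\omega^A}$ n.e.\ on $X$. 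Finally, the set $\{u>U^{\omega^A}\}$ is Borel, hence capacitable by Theorem~\ref{l-top}, so its vanishing inner capacity forces vanishing outer capacity; that is, $U^{\omega^{A_j}}\downarrow U^{\omega^A}$ q.e.\ on $X$.

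The main obstacle is the energy identity $\|\omega^{A_j}-\omega^{A_k}\|^2=\|\omega^{A_j}\|^2-\|\omega^{A_k}\|^2$ underlying the strong Cauchy property: it depends crucially on Proposition~\ref{bal-rest}, which turns the nested family $(\omega^{A_k})_{k\geqslant j}$ into iterated inner balayages of the single finite-energy measure $\omega^{A_j}$; arguing directly with the cones $\mathcal E^+(A_j)$ would be awkward here because, when $\omega$ has infinite energy, $\omega^{A_j}$ is not an orthogonal projection of $\omega$ and the Hilbert-space geometry is not directly available. The only other point requiring some care is the passage from ``n.e.'' to ``q.e.'' in (\ref{mon2}), which I would handle exactly as above, via Borel measurability of the potentials and capacitability of Borel subsets of $X$.
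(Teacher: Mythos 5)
Your proof is correct, and while the overall skeleton (existence, strong Cauchy property, identification of the limit via Lemmas~\ref{l2} and \ref{str}, then monotonicity plus an upgrade from n.e.\ to q.e.) matches the paper's, two of your key steps are genuinely different. For the strong Cauchy property the paper works variationally: it uses the fact that $\omega^{A_j}$ minimizes the Gauss functional $I_f$ over $\mathcal E^+(A_j)$ (Remark~\ref{rem-int}), the monotonicity and boundedness of $\widehat{w}_f(A_j)$, and the parallelogram identity, to get $\|\omega^{A_{j+1}}-\omega^{A_j}\|^2\leqslant2I_f(\omega^{A_{j+1}})-2I_f(\omega^{A_j})$. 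You instead use Proposition~\ref{bal-rest} to write $\omega^{A_k}=(\omega^{A_j})^{A_k}$ and derive the exact identity $\|\omega^{A_j}-\omega^{A_k}\|^2=\|\omega^{A_j}\|^2-\|\omega^{A_k}\|^2$; this is a clean Pythagorean argument that bypasses the external field entirely and yields the monotonicity of the energies as a bonus, at the (mild) cost of invoking Proposition~\ref{bal-rest}, which is itself a consequence of Theorem~\ref{th1} and so involves no circularity. For (\ref{mon2}) the paper cites Fuglede's Lemma~4.3.3 to pass from strong convergence to q.e.\ convergence of the (monotone) potentials, whereas you give a self-contained argument: a decreasing sequence with a subsequential limit converges to it, the exceptional set $\{\inf_jU^{\omega^{A_j}}>U^{\omega^A}\}$ is Borel, and Borel sets are capacitable (Theorem~\ref{l-top}), so inner capacity zero forces outer capacity zero. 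Your treatment of existence (showing $\mathcal E^+(A)=\bigcap_j\mathcal E^+(A_j)$ directly rather than quoting that a countable intersection of quasiclosed sets is quasiclosed) is an equally valid minor variant. All steps check out.
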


\begin{proof}
The existence of $\omega^{A_j}$, $j\in\mathbb N$, and $\omega^A$ is ensured by Theorem~\ref{th1}, a countable intersection of quasiclosed sets being likewise quasiclosed (Fuglede \cite[Lemma~2.3]{F71}).\footnote{Recall that for quasiclosed $A\subset X$, $\mathcal E^+(A)$ is strongly closed \cite[Theorem~2.13]{Z-Oh}.}

Clearly, $\widehat{w}_f(A_j)$ increases as $j\to\infty$, and does not exceed $\widehat{w}_f(A)$, cf.\ (\ref{hatw'}). On account of (\ref{hatw}), this gives
\begin{equation*}
-\infty<\lim_{j\to\infty}\,I_f(\omega^{A_j})\leqslant0,
\end{equation*}
and hence the sequence $\bigl(I_f(\omega^{A_j})\bigr)$ is Cauchy in $\mathbb R$. As $\omega^{A_{j+1}}\in\mathcal E^+(A_j)$, whereas $\omega^{A_j}$ minimizes $I_f(\mu)$ over $\mu\in\mathcal E^+(A_j)$ (cf.\ Remark~\ref{rem-int}), we see, by utilizing the convexity of the class
$\mathcal E^+(A_j)$ and the parallelogram identity applied to $\omega^{A_j}$ and $\omega^{A_{j+1}}$, that
\[\|\omega^{A_{j+1}}-\omega^{A_j}\|^2\leqslant2I_f(\omega^{A_{j+1}})-2I_f(\omega^{A_j}),\]
and hence the sequence $(\omega^{A_j})_{j\geqslant k}$ is strong Cauchy in
$\mathcal E^+(A_k)$ for each $k\in\mathbb N$. The cone $\mathcal E^+(A_k)$ being strongly complete (Section~\ref{sec-intr1}), there exists a measure $\mu_0$ which belongs to $\mathcal E^+(A_k)$ for each $k\in\mathbb N$, and such that
\begin{equation}\label{LL}
\omega^{A_j}\to\mu_0\quad\text{strongly (hence, vaguely) in $\mathcal E^+$}.
\end{equation}
We assert that
\begin{equation}\label{EE}
 \mu_0=\omega^A,
\end{equation}
which substituted into (\ref{LL}) would have implied (\ref{mon1}).

As seen from the above, $A^c$ is the union of $(A_k)^c$, $k\in\mathbb N$,
where each $(A_k)^c$ is $\mu_0$-neg\-lig\-ib\-le; thus $A^c$ is likewise $\mu_0$-negligible \cite[Section~IV.4, Proposition~8]{B2}, hence \[\mu_0\in\mathcal E^+(A).\] By virtue of Theorem~\ref{th1}(i), (\ref{EE}) will therefore follow once we verify that
\begin{equation}\label{hryu}
U^{\mu_0}=U^\omega\quad\text{n.e.\ on $A$.}
\end{equation}
But, again by Theorem~\ref{th1}(i), $U^{\omega^{A_j}}=U^\omega$ n.e.\ on $A_j$, hence also n.e.\ on the (smaller) set $A$, which in view of the strong convergence of $(\omega^{A_j})$ to $\mu_0$ results in (\ref{hryu}), thereby establishing (\ref{mon1}). (Here we have used Lemmas~\ref{str} and \ref{l2}.)

As clear from (\ref{rest'}), the sequence $(U^{\omega^{A_j}})$ decreases pointwise on $X$, and moreover
\begin{equation}\label{hr}
 \lim_{j\to\infty}\,U^{\omega^{A_j}}\geqslant U^{\omega^A}\quad\text{on $X$}.
\end{equation}
Since $\omega^{A_j}\to\omega^A$ strongly, applying \cite{F1} (see Lemma~4.3.3 and the paragraph after it) shows that equality actually prevails in (\ref{hr}) q.e.\ on $X$, which is (\ref{mon2}).\end{proof}

The following assertion on the outer balayage is obtained by combining the above propositions with Theorem~\ref{th1'}.

\begin{proposition}\label{pr-ou}
For any one of Propositions~{\rm\ref{bal-pot}--\ref{bal-tot-m}, \ref{bal-tm}, and \ref{bal-mon}}, if all the sets appearing in it are Borel, then such a proposition remains valid with the inner swept measures replaced throughout by the outer swept measures.
\end{proposition}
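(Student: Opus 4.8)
The plan is to reduce the whole assertion to the coincidence $\omega^{*A}=\omega^A$ established in Theorem~\ref{th1'}. Once every set occurring in one of Propositions~\ref{bal-pot}--\ref{bal-tot-m}, \ref{bal-tm}, \ref{bal-mon} is assumed to be Borel, Theorem~\ref{l-top} guarantees that it is capacitable, so the exceptional sets of inner capacity zero that show up there are also of outer capacity zero, and Theorem~\ref{th1'} then applies and yields that the outer balayage onto each such set exists and equals the corresponding inner balayage. Since each of the conclusions in these propositions is an identity or inequality between potentials, energies, or total masses of the inner swept measures (possibly together with capacities and equilibrium measures, which are likewise unaffected on capacitable sets), the outer versions follow by simply substituting $\omega^{*A}$ for $\omega^A$, $\omega^{*A_j}$ for $\omega^{A_j}$, and so on.

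First I would dispose of Propositions~\ref{bal-pot}, \ref{bal-tot-m}, and \ref{bal-tm}: each involves only the single balayage $\omega^A$ onto the one Borel set $A$, so $\omega^{*A}=\omega^A$ (Theorem~\ref{th1'}) gives the outer versions at once; for Proposition~\ref{bal-tm} one additionally notes $c_*(A)=c^*(A)$ and that the equilibrium measure $\gamma_A$ is unchanged, again by Theorem~\ref{l-top}. Next, for Proposition~\ref{bal-rest} with $A'\subset A$ both Borel, I would invoke Theorem~\ref{th1'} three times: for $\omega$ onto $A'$, for $\omega$ onto $A$, and --- this being the only point requiring a word of care --- for the finite-energy measure $\omega^A=\omega^{*A}$ onto $A'$ (legitimate since $\omega^A\in\mathcal E^+$, so the standing hypotheses of Theorems~\ref{th1} and \ref{th1'} hold for it too, while $\mathcal E^+(A')$ is strongly closed by the assumptions of Proposition~\ref{bal-rest}). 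This yields $\omega^{*A'}=\omega^{A'}=(\omega^A)^{A'}=(\omega^{*A})^{*A'}$, and (\ref{rest'}) follows for the outer potentials. Finally, in Proposition~\ref{bal-mon}, a countable intersection of Borel sets is Borel, so $A=\bigcap_j A_j$ is Borel along with each $A_j$; Theorem~\ref{th1'} gives $\omega^{*A_j}=\omega^{A_j}$ and $\omega^{*A}=\omega^A$, so (\ref{mon1}) and (\ref{mon2}) transcribe verbatim.

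The proof carries no real difficulty: all the substance resides in Theorem~\ref{th1'}, and what remains is the routine verification, proposition by proposition, that the hypotheses of Theorem~\ref{th1'} are in force for every set (and, in the case of Proposition~\ref{bal-rest}, for the auxiliary measure $\omega^A$) entering the statement.
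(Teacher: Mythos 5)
Your proposal is correct and follows essentially the same route as the paper, which disposes of this proposition in a single sentence by combining Propositions~\ref{bal-pot}--\ref{bal-mon} with the identity $\omega^{*A}=\omega^A$ of Theorem~\ref{th1'}. The extra care you take for Proposition~\ref{bal-rest} (applying Theorem~\ref{th1'} also to the finite-energy measure $\omega^A$ swept onto $A'$) and for the capacitability of Borel sets via Theorem~\ref{l-top} merely makes explicit what the paper leaves implicit.
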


\section{Applications of balayage to minimum energy problems}\label{sec-appl}

$\bullet$ In what follows, a locally compact space $X$, a kernel $\kappa$ on $X$, a measure $\omega\in\mathfrak M^+$, $\omega\ne0$, and a set $A\subset X$ with $c_*(A)>0$ are as required in Theorem~\ref{th1}. That is: $X$ is second-countable; $\kappa$ is positive, perfect, and satisfies the domination and Ugaheri maximum principles; the cone $\mathcal E^+(A)$ is strongly closed (hence strongly complete); and either $\omega$ has finite energy, i.e.\ $\omega\in\mathcal E^+$, or it is a bounded measure whose potential $U^\omega$ is bounded on $A$, whereas $U^\omega|_K$ is continuous for every compact subset $K\subset A$.

The inner balayage $\omega^A$ of $\omega$ onto $A$, uniquely introduced by means of Theorem~\ref{th1}, is shown below to be an efficient tool in {\it the inner Gauss variational problem},\footnote{For the bibliography on this problem, see e.g.\ \cite{BHS,Dr0}, \cite{O}--\cite{Z5a}, \cite{Z-Rarx,Z-Oh} and references therein.} the problem on the existence of $\lambda_{A,f}\in\breve{\mathcal E}^+(A)$ with
\begin{equation}\label{G}
 I_f(\lambda_{A,f})=\inf_{\mu\in\breve{\mathcal E}^+(A)}\,I_f(\mu)=:w_f(A).
\end{equation}
Here $I_f(\mu)$ is the energy of $\mu$ evaluated in the presence of the external field $f=-U^\omega$, referred to as the Gauss functional or the $f$-weighted energy, see (\ref{Gf}), while
\[\breve{\mathcal E}^+(A):=\bigl\{\mu\in\mathcal E^+(A):\ \mu(X)=1\bigr\}.\]

Since $f$ is obviously $\mu$-integrable for each $\mu\in\breve{\mathcal E}^+(A)$, we have
\begin{equation}\label{ww}
-\infty<\widehat{w}_f(A)\leqslant w_f(A)<\infty,
\end{equation}
the first inequality being clear from (\ref{hatw}). Thus $w_f(A)$ is {\it finite}, which allows us to prove, by use of the convexity of the class $\breve{\mathcal E}^+(A)$ and the pre-Hilbert structure on the space $\mathcal E$, that the solution $\lambda_{A,f}$ to problem (\ref{G}) is {\it unique} (see \cite[Lemma~6]{Z5a}).

As for the existence of $\lambda_{A,f}$, it does indeed exist if $A:=K$ is compact while $f|_K$ is l.s.c., for then  $\breve{\mathcal E}^+(K)$ is vaguely compact \cite[Section~III.1.9, Corollary~3]{B2} while $I_f(\mu)$ is vaguely l.s.c.\ on $\breve{\mathcal E}^+(K)$, cf.\ \cite[Theorem~2.6]{O}. But if any of these two requirements is not fulfilled, then the above arguments, based on the vague topology only, fail down, and the problem becomes "rather difficult" (Ohtsuka \cite[p.~219]{O}).

Our analysis of the inner Gauss variational problem for $A$ and $f=-U^\omega$, indicated at the top of this section, is based on the concept of inner balayage $\omega^A$, introduced in the present study, and is mainly performed with the aid of the approach originated in our recent work \cite{Z-Oh}. However, \cite{Z-Oh} was only concerned with external fields created by measures of {\it finite} energy, and exactly this circumstance made it possible to exploit efficiently the two topologies~--- strong and vague.

Nonetheless, in view of the fact that the inner balayage $\omega^A$ is of the class $\mathcal E^+(A)$ (Theorem~\ref{th1}), the approach suggested in \cite{Z-Oh} can largely be adapted to the external field $f$ in question, which enables us to improve substantially the results from \cite{Z-Oh}, by strengthening their formulations and/or by extending the area of their validity. See Sections~\ref{ssec-solv}--\ref{ssec-sup} for the results thereby obtained, and Section~\ref{sec-prr} for their proofs.

\subsection{On the solvability of problem (\ref{G})}\label{ssec-solv} This subsection is to provide necessary and/or sufficient conditions for the existence of the minimizer $\lambda_{A,f}$.

\begin{theorem}\label{thap}The following {\rm(i$_1$)--(iii$_1$)} on the solvability of problem {\rm(\ref{G})} hold true.
\begin{itemize}
\item[{\rm(i$_1$)}] For $\lambda_{A,f}$ to exist, it is sufficient that
\begin{equation*}c_*(A)<\infty\quad\text{or}\quad\omega^A(X)=1.\end{equation*}
In the latter case,
\begin{equation}\label{eqq}
\lambda_{A,f}=\omega^A.
\end{equation}
\item[{\rm(ii$_1$)}] $\lambda_{A,f}$ fails to exist if the two assumptions are fulfilled:
\begin{equation}\label{nec}
 c_*(A)=\infty\quad\text{and}\quad\omega^A(X)<1.
\end{equation}
\item[{\rm(iii$_1$)}] If Frostman's maximum principle holds, then $\lambda_{A,f}$ does exist whenever
\begin{equation}\label{geq}
 \omega^A(X)>1.
\end{equation}
\end{itemize}
\end{theorem}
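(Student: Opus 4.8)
The strategy is to analyze problem~(\ref{G}) by relating the minimizer $\lambda_{A,f}$ to the inner balayage $\omega^A$ via a perturbation (variational inequality) argument, exactly as in \cite{Z-Oh}, but now exploiting that $\omega^A\in\mathcal E^+(A)$ is guaranteed by Theorem~\ref{th1} even when $\omega\notin\mathcal E^+$. The key observation is the decomposition
\begin{equation}\label{pf-key}
I_f(\mu)=\|\mu-\omega^A\|^2-\|\omega^A\|^2+2\int\bigl(U^{\omega^A}-U^\omega\bigr)\,d\mu
\qquad\text{for }\mu\in\mathcal E^+_f(A),
\end{equation}
which follows by expanding $\|\mu-\omega^A\|^2$ and using $\int U^{\omega^A}\,d\omega^A=\int U^\omega\,d\omega^A$ (from (\ref{eqpr3}) with $\mu:=\omega^A$). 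Since $U^{\omega^A}=U^\omega$ n.e.\ on $A$ by Theorem~\ref{th1}(i), hence $\mu$-a.e.\ for $\mu\in\mathcal E^+(A)$ by Lemma~\ref{l1}, the last integral in (\ref{pf-key}) vanishes for every $\mu\in\breve{\mathcal E}^+(A)$, so that $I_f(\mu)=\|\mu-\omega^A\|^2-\|\omega^A\|^2$ on $\breve{\mathcal E}^+(A)$. Thus problem~(\ref{G}) is equivalent to minimizing $\|\mu-\omega^A\|^2$ over the convex set $\breve{\mathcal E}^+(A)$, i.e.\ to finding the metric projection of $\omega^A$ onto $\breve{\mathcal E}^+(A)$ in the pre-Hilbert space $\mathcal E$.

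\emph{Proof of (i$_1$).} If $\omega^A(X)=1$, then $\omega^A\in\breve{\mathcal E}^+(A)$ itself, so the projection is attained at $\omega^A$ and (\ref{eqq}) holds; this is immediate from the reformulation above. If instead $c_*(A)<\infty$, I would follow the standard vague-compactness route: take a minimizing sequence $(\mu_j)\subset\breve{\mathcal E}^+(A)$ for $I_f$; by (\ref{pf-key}) this sequence is strongly bounded, hence (since the masses are all $1$) it is vaguely bounded, so after passing to a subsequence $\mu_j\to\mu_0$ vaguely with $\mu_0\in\mathfrak M^+$. One must check $\mu_0\in\breve{\mathcal E}^+(A)$: the mass condition $\mu_0(X)=1$ uses $c_*(A)<\infty$ together with the fact that then $A$ admits an inner equilibrium potential bounded on $X$ (by Frostman/Ugaheri), which forces vague tightness of $(\mu_j)$ and prevents mass escaping to infinity — this is the one place where $c_*(A)<\infty$ is essential; concentration on $A$ follows because $\mathcal E^+(A)$ is strongly closed and $(\mu_j)$ is strong Cauchy (again from (\ref{pf-key}) and the parallelogram identity applied to the midpoints $(\mu_i+\mu_j)/2\in\breve{\mathcal E}^+(A)$), so the vague limit coincides with the strong limit which lies in $\mathcal E^+(A)$. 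Then $I_f(\mu_0)\leqslant\liminf_j I_f(\mu_j)=w_f(A)$ by vague lower semicontinuity (of $\mu\mapsto I(\mu)$, and continuity of $\mu\mapsto\int U^\omega\,d\mu$ is \emph{not} available here, but one can instead use that $I_f(\mu_j)=\|\mu_j-\omega^A\|^2-\|\omega^A\|^2$ and strong convergence $\mu_j\to\mu_0$), giving $\lambda_{A,f}=\mu_0$.

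\emph{Proof of (ii$_1$).} Suppose $c_*(A)=\infty$ and $\omega^A(X)<1$. If a minimizer $\lambda_{A,f}$ existed, it would be the metric projection of $\omega^A$ onto $\breve{\mathcal E}^+(A)$, characterized by the variational inequality $\langle\lambda_{A,f}-\omega^A,\mu-\lambda_{A,f}\rangle\geqslant0$ for all $\mu\in\breve{\mathcal E}^+(A)$, i.e.\ $\int(U^{\lambda_{A,f}}-U^{\omega^A})\,d(\mu-\lambda_{A,f})\geqslant0$. Exhausting $A$ by compacta $K\uparrow A$ with $c(K)\to c_*(A)=\infty$ and testing with normalized equilibrium-type measures $\gamma_K/c(K)\in\breve{\mathcal E}^+(A)$, whose potentials tend to $0$ pointwise (since $U^{\gamma_K}\leqslant M$ and $c(K)\to\infty$) while their mutual energies with $\lambda_{A,f}$ also go to $0$, one deduces $\int(U^{\lambda_{A,f}}-U^{\omega^A})\,d\lambda_{A,f}\leqslant 0$; combined with $U^{\lambda_{A,f}}\geqslant U^{\omega^A}$ n.e.\ on $A$ (which comes from perturbing $\lambda_{A,f}$ in the direction of small test measures on compact subsets of $A$, as in (\ref{eqpr4})) and the domination principle, this forces $U^{\lambda_{A,f}}=U^{\omega^A}$ $\lambda_{A,f}$-a.e., hence $U^{\lambda_{A,f}}\leqslant U^{\omega^A}$ on $X$ by domination; but then $\lambda_{A,f}\in\Gamma_{A,\omega}\cap\mathcal E^+$ with a pointwise-smaller potential than $\omega^A$, contradicting the minimality in Theorem~\ref{th1}(v) unless $\lambda_{A,f}=\omega^A$, whereas $\lambda_{A,f}(X)=1>\omega^A(X)$. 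Hence no minimizer exists.

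\emph{Proof of (iii$_1$) and the main obstacle.} Assume Frostman's maximum principle and $\omega^A(X)>1$. Here the idea is to produce the minimizer by ``trimming'' $\omega^A$ down to mass $1$: one seeks a set $A_t\subset A$ (or a sub-measure) on which the balayage has exactly mass $1$ and whose potential still agrees with $U^\omega$ there, then shows this balayage is the required $\lambda_{A,f}$. Concretely, using Proposition~\ref{bal-mon}-type monotonicity one considers the family $\omega^{A\cap\overline{B}(x_0,r)}$ (intersections with closed balls), whose masses increase continuously-ish from values $\leqslant 1$ up to $\omega^A(X)>1$; by an intermediate-value argument there is a set $A'$ with $\omega^{A'}(X)=1$, and since $A'\subset A$ with $\mathcal E^+(A')$ strongly closed, Proposition~\ref{bal-rest} gives $\omega^{A'}=(\omega^A)^{A'}$ with $U^{\omega^{A'}}=U^\omega$ n.e.\ on $A'$, hence on a co-polar subset; one then checks $\omega^{A'}$ solves (\ref{G}) via (\ref{pf-key}) (the last integral is $\leqslant 0$ on $\breve{\mathcal E}^+(A)$ because $U^{\omega^{A'}}\leqslant U^{\omega^A}=U^\omega$ n.e.\ on $A$, using (\ref{rest'}), so $I_f(\mu)\geqslant \|\mu-\omega^{A'}\|^2-\|\omega^{A'}\|^2\geqslant -\|\omega^{A'}\|^2 = I_f(\omega^{A'})$ after verifying the cross term vanishes at $\mu=\omega^{A'}$). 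The main obstacle is the intermediate-value step: one must show the mass function $r\mapsto\omega^{A\cap\overline{B}(x_0,r)}(X)$ is continuous (left-continuity from Proposition~\ref{bal-mon}, right-continuity from an analogous increasing-sets statement) and actually attains the value $1$ — here Frostman's maximum principle is exactly what is needed, both to have $\omega^A(X)=\int U^\omega\,d\gamma_A$-type formulas (Proposition~\ref{bal-tm}) available for the finite-capacity pieces and to control mass via the equilibrium potential being $\leqslant 1$ rather than $\leqslant M$.
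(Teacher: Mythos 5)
Your dual reformulation $I_f(\mu)=\|\mu-\omega^A\|^2-\|\omega^A\|^2$ on $\breve{\mathcal E}^+(A)$ is exactly the paper's starting point, and your (i$_1$) is essentially the paper's argument (which outsources the case $c_*(A)<\infty$ to \cite[Theorem~1.2]{Z-Oh}; your "tightness" claim for the mass of the vague limit is precisely the content of Theorem~\ref{breve} and needs more than an appeal to Frostman/Ugaheri, but the idea is right). The genuine problems are in (ii$_1$) and (iii$_1$). In (ii$_1$) you invoke "$U^{\lambda_{A,f}}\geqslant U^{\omega^A}$ n.e.\ on $A$\,\ldots as in (\ref{eqpr4})". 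But (\ref{eqpr4}) is the perturbation $\omega^A+t\tau$ for the \emph{unconstrained} problem over $\mathcal E^+(A)$; the admissible perturbations of $\lambda_{A,f}$ inside $\breve{\mathcal E}^+(A)$ are of the form $(1-t)\lambda_{A,f}+t\tau$ with $\tau(X)=1$, and they yield only $U^{\lambda_{A,f}}\geqslant U^{\omega^A}+c_{A,f}$ n.e.\ on $A$, with $c_{A,f}=\int U_f^{\lambda_{A,f}}\,d\lambda_{A,f}$ a priori of either sign. Your $\gamma_K/c(K)$ test shows $c_{A,f}\leqslant0$, which is the wrong direction: if $c_{A,f}<0$, you get neither $\lambda_{A,f}\in\Gamma_{A,\omega}$ nor the contradiction with Theorem~\ref{th1}(v), and the argument collapses. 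The gap closes if you instead observe that for $c_*(A)=\infty$ one has $w_f(A)=\widehat{w}_f(A)$ (take $\mu_j=\omega^A+(1-\omega^A(X))\tau_j$ with $\tau_j\in\breve{\mathcal E}^+(A)$, $\tau_j\to0$ strongly), so any solution of (\ref{G}) would also solve the unconstrained problem and hence equal $\omega^A$, of mass ${}<1$ --- this is the paper's route.

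For (iii$_1$) the strategy is wrong, not merely incomplete. Any solution satisfies $U_f^{\lambda_{A,f}}=c_{A,f}$ $\lambda_{A,f}$-a.e., and when $\omega^A(X)>1$ one has $c_{A,f}<0$ (Theorem~\ref{thap'}; for $c_*(A)<\infty$ the explicit formula is $\lambda_{A,f}=\omega^A+c_{A,f}\gamma_A$ with $c_{A,f}<0$). By contrast, any balayage $\omega^{A'}$ with $A'\subset A$ satisfies $U_f^{\omega^{A'}}=U^{\omega^{A'}}-U^\omega=0$ $\omega^{A'}$-a.e.; so no measure of the form $\omega^{A'}$ can solve (\ref{G}) in this regime, and no intermediate-value tuning of $A'$ can repair this. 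Your own verification already betrays the problem: replacing $\omega^A$ by $\omega^{A'}$ in your key decomposition, the extra term $2\int(U^{\omega^{A'}}-U^\omega)\,d\mu$ is $\leqslant0$ on $\breve{\mathcal E}^+(A)$, which gives the \emph{upper} bound $I_f(\mu)\leqslant\|\mu-\omega^{A'}\|^2-\|\omega^{A'}\|^2$, not the lower bound you need. The paper proves (iii$_1$) differently: it takes the minimizing net $(\lambda_{K,f})_{K\in\mathfrak C_A}$, passes to the extremal measure $\xi_{A,f}$ satisfying $U_f^{\xi_{A,f}}\geqslant C_\xi:=\int U_f^{\xi_{A,f}}\,d\xi_{A,f}$ n.e.\ on $A$, rules out $C_\xi\geqslant0$ by Deny's principle of positivity of mass together with $\omega^A(X)>1$ and $\xi_{A,f}(X)\leqslant1$, and then integrates the inequality against $\xi_{A,f}$ to force $\xi_{A,f}(X)\geqslant1$, hence $\xi_{A,f}\in\breve{\mathcal E}^+(A)$ and $\xi_{A,f}=\lambda_{A,f}$.
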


$\P$ Thus, if Frostman's maximum principle holds, then $\lambda_{A,f}$ exists if and only if
\begin{equation}\label{Fiff}c_*(A)<\infty\quad\text{or}\quad\omega^A(X)\geqslant1.\end{equation}

\begin{corollary}\label{cor1}
 If $A$ is quasicompact, then $\lambda_{A,f}$ does exist.
\end{corollary}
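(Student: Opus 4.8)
The plan is to deduce the corollary from Theorem~\ref{thap}(i$_1$), one of whose sufficient conditions for the existence of the minimizer $\lambda_{A,f}$ is $c_*(A)<\infty$. Everything therefore reduces to the observation that a quasicompact set has finite inner capacity --- the capacity-theoretic counterpart of the elementary fact that a compact set does.

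First I would unwind the definition of quasicompactness to produce a compact $K\subset X$ with $c^*(A\bigtriangleup K)<1$ (finiteness of this quantity is all that matters). Since $A\subset K\cup(A\setminus K)$ while $A\setminus K\subset A\bigtriangleup K$, monotonicity and subadditivity of the outer capacity, combined with $c(K)<\infty$ for compact $K$ (the energy principle), give
\[
c_*(A)\leqslant c^*(A)\leqslant c^*(K)+c^*(A\bigtriangleup K)\leqslant c(K)+c^*(A\bigtriangleup K)<\infty.
\]
Moreover $\mathcal E^+(A)$ is strongly closed for quasicompact $A$ by \cite[Theorem~2.13]{Z-Oh}, so all the standing hypotheses of Theorem~\ref{thap} are met for $A$, and its part~(i$_1$) yields the existence of $\lambda_{A,f}$ at once.

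I do not expect any real obstacle here; the only step demanding a modicum of care is the displayed estimate, which rests on the subadditivity of the outer capacity (cf.\ \cite{F1}) and on the finiteness of the capacity of compact sets. It is worth noting that, in contrast to Theorem~\ref{thap}(iii$_1$), Frostman's maximum principle is not invoked: quasicompactness places us squarely in the regime $c_*(A)<\infty$, so part~(i$_1$) alone is enough and there is no need to appeal to the full equivalence recorded right after Theorem~\ref{thap}.
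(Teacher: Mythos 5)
Your proof is correct and follows essentially the same route as the paper: both arguments reduce the corollary to the bound $c_*(A)\leqslant c^*(A)<\infty$ and then invoke Theorem~\ref{thap}(i$_1$). The only (immaterial) difference is that you obtain the finiteness of capacity directly from the definition of quasicompactness via subadditivity of the outer capacity and $c(K)<\infty$ for compact $K$, whereas the paper quotes Fuglede's equivalent description of quasicompact sets; both are valid.
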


\begin{proof} As noted by Fuglede \cite[pp.~127--128]{F71}, $A$ is quasicompact if and only if there is an open set $G\subset X$ with $c(G)$ arbitrarily small such that $A\setminus G$ may be extended to a compact subset of $X$ by adding points of $G$. Thus, $A$ must be of finite outer, hence inner capacity, and applying Theorem~\ref{thap}(i$_1$) we obtain the corollary.\end{proof}

\begin{corollary}\label{cor2}
 If $c_*(A)=\infty$, then $\lambda_{A,f}$ fails to exist whenever
 \begin{equation}\label{le}\omega(X)<1/M,\end{equation}
 $M=M_{X,\kappa}\in[1,\infty)$ being the constant appearing in Ugaheri's maximum principle.
\end{corollary}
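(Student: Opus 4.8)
The plan is to reduce Corollary~\ref{cor2} directly to Theorem~\ref{thap}(ii$_1$) by bounding the total mass of the inner balayage. First I would invoke the mass estimate \eqref{emm'} from Proposition~\ref{bal-pot}, namely $\omega^A(X)\leqslant M_{X,\kappa}\cdot\omega(X)$, which holds under the standing hypotheses on $X$, $\kappa$, $\omega$, and $A$. Combining this with the assumed bound \eqref{le}, $\omega(X)<1/M$, where $M=M_{X,\kappa}$, immediately yields
\begin{equation*}
 \omega^A(X)\leqslant M\cdot\omega(X)<M\cdot\frac1M=1.
\end{equation*}

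Next I would observe that, together with the hypothesis $c_*(A)=\infty$, this shows that both conditions in \eqref{nec} are satisfied, namely $c_*(A)=\infty$ and $\omega^A(X)<1$. Applying Theorem~\ref{thap}(ii$_1$) then gives that the minimizer $\lambda_{A,f}$ to problem~\eqref{G} fails to exist, which is exactly the claim. There is no real obstacle here: the corollary is a straightforward specialization of part~(ii$_1$) of Theorem~\ref{thap}, the only ingredient being the quantitative mass bound \eqref{emm'}, which has already been established (and which in turn rests on Ugaheri's maximum principle, Fubini's theorem, and the defining property of the capacitary measure on $S(\omega^K)$).
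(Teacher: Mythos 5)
Your proposal is correct and coincides with the paper's own argument: combine the mass bound \eqref{emm'} with \eqref{le} to get $\omega^A(X)\leqslant M\omega(X)<1$, and then apply Theorem~\ref{thap}(ii$_1$) under the hypothesis $c_*(A)=\infty$. Nothing is missing.
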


\begin{proof} Combining (\ref{le}) with (\ref{emm'}) gives $\omega^A(X)\leqslant M\omega(X)<1$, which according to Theorem~\ref{thap}(ii$_1$) results in the claim.
\end{proof}

\subsection{On the inner $f$-weighted equilibrium constant for a set $A$}\label{ssec-const} As seen from \cite[Theorems~1, 2]{Z5a} (cf.\ also \cite[Theorem~3.5]{Z-Oh}), for $\mu\in\breve{\mathcal E}^+(A)$ to serve as the (unique) solution $\lambda_{A,f}$ to problem (\ref{G}), it is necessary and sufficient that
\begin{equation}\label{ch-1}
U_f^\mu\geqslant\int U^\mu_f\,d\mu\quad\text{n.e.\ on $A$},
\end{equation}
or equivalently
\begin{equation}\label{ch-2}
U_f^\mu=\int U^\mu_f\,d\mu\quad\text{$\mu$-a.e.\ on $X$},
\end{equation}
where $U_f^\mu:=U^\mu+f$ is said to be {\it the $f$-weighted potential} of $\mu$.\footnote{For $f$ in question, the $f$-weighted potential $U_f^\nu$, where $\nu\in\mathcal E^+$, is finite n.e.\ on $A$, which is obvious on account of Lemma~\ref{str} and the fact that $U^\nu$ is finite n.e.\ on $X$ (see \cite[p.~164]{F1}).\label{f1}}
The (finite) constant
\begin{equation}\label{ch-3}c_{A,f}:=\int U_f^{\lambda_{A,f}}\,d\lambda_{A,f}\end{equation} is referred to as {\it the inner $f$-weighted equilibrium constant} for the set $A$.\footnote{Similarly as in \cite[p.~8]{ST}, $c_{A,f}$ might also be termed {\it the inner modified Robin constant}.}

\begin{theorem}\label{thap'} Assume Frostman's maximum principle is fulfilled. Also assume that $\lambda_{A,f}$ exists, or equivalently that {\rm(\ref{Fiff})} holds. Then
 \begin{align}
c_{A,f}&<0\quad\text{if\/ $\omega^A(X)>1$},\label{C2'}\\
c_{A,f}&=0\quad\text{if\/ $\omega^A(X)=1$},\label{C1'}\\
c_{A,f}&>0\quad\text{if\/ $\omega^A(X)<1$}.\label{C3'}
\end{align}
\end{theorem}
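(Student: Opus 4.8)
The plan is to relate the equilibrium constant $c_{A,f}$ directly to the mass $\omega^A(X)$ by testing the characterization \eqref{ch-1}--\eqref{ch-2} of $\lambda_{A,f}$ against the inner balayage $\omega^A$, and then invoking the three regimes established in Theorems \ref{thap} and \ref{thap'} itself via the sign of $\omega^A(X)-1$. The key observation is that, by Theorem \ref{th1}(i), $U^{\omega^A}=U^\omega$ n.e.\ on $A$, so the $f$-weighted potential of $\lambda_{A,f}$ can be compared against zero on $A$. First I would compute, for the minimizer $\mu:=\lambda_{A,f}$, the quantity $\int U_f^{\lambda_{A,f}}\,d\omega^A$ using the symmetry of the kernel and Theorem \ref{th1}(i): since $\omega^A\in\mathcal E^+(A)$ and $U^{\lambda_{A,f}}+f=U^{\lambda_{A,f}}-U^\omega$, and $U^{\omega^A}=U^\omega$ holds $\omega^A$-a.e.\ (Lemma \ref{l1}), Fubini's theorem gives
\begin{equation*}
\int U_f^{\lambda_{A,f}}\,d\omega^A=\int U^{\lambda_{A,f}}\,d\omega^A-\int U^\omega\,d\omega^A=\int U^{\omega^A}\,d\lambda_{A,f}-\int U^{\omega^A}\,d\omega^A=\int U^{\omega^A}_f\bigl|_{\text{evaluated via }\omega}\cdots
\end{equation*}
more precisely $\int U_f^{\lambda_{A,f}}\,d\omega^A=\langle\lambda_{A,f},\omega^A\rangle-I(\omega^A,\omega)=\langle\lambda_{A,f}-\omega^A,\omega^A\rangle$, using $I(\omega^A,\omega)=I(\omega^A)$ from \eqref{in-en}.

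On the other hand, integrating \eqref{ch-1} against $\omega^A$ (legitimate since $\omega^A\in\mathcal E^+(A)$ and the exceptional set has inner capacity zero, hence is $\omega^A$-negligible by Lemma \ref{l1}) yields $\int U_f^{\lambda_{A,f}}\,d\omega^A\geqslant c_{A,f}\,\omega^A(X)$. Combining with \eqref{ch-2} integrated against $\lambda_{A,f}$, which gives $c_{A,f}=\int U_f^{\lambda_{A,f}}\,d\lambda_{A,f}=\|\lambda_{A,f}\|^2-\int U^\omega\,d\lambda_{A,f}$, and comparing the two expressions for $\int U_f^{\lambda_{A,f}}\,d(\omega^A-\lambda_{A,f})$, I expect to obtain a clean identity of the shape $c_{A,f}\bigl(\omega^A(X)-1\bigr)=-\|\lambda_{A,f}-\omega^A\|^2\leqslant0$, or something equivalent after using $\lambda_{A,f}(X)=1$. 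From such an identity the three cases follow immediately: when $\omega^A(X)=1$ we get $\lambda_{A,f}=\omega^A$ (already \eqref{eqq}) hence $c_{A,f}=I_f(\omega^A)-$ wait, more directly $c_{A,f}=\int U_f^{\omega^A}\,d\omega^A=0$ since $U_f^{\omega^A}=U^{\omega^A}-U^\omega=0$ n.e.\ on $A$ hence $\omega^A$-a.e.; when $\omega^A(X)>1$ the factor $\omega^A(X)-1>0$ forces $c_{A,f}<0$ unless $\lambda_{A,f}=\omega^A$, which is excluded by the mass normalization; and symmetrically $\omega^A(X)<1$ forces $c_{A,f}>0$.

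The delicate point, and the one I would treat most carefully, is the strict inequality in the cases $\omega^A(X)\ne1$: one must rule out $c_{A,f}=0$, and for that the identity must genuinely be $c_{A,f}\bigl(\omega^A(X)-1\bigr)=-\|\lambda_{A,f}-\omega^A\|^2$ with the right-hand side strictly negative because $\lambda_{A,f}\ne\omega^A$ (as they have different total masses). Establishing that identity cleanly requires juggling the weighted potentials: I would write $c_{A,f}=\int U_f^{\lambda_{A,f}}\,d\lambda_{A,f}$ and also evaluate $\int U_f^{\lambda_{A,f}}\,d\omega^A$, subtract, use $U^{\omega^A}=U^\omega$ both $\lambda_{A,f}$-a.e.\ (via \eqref{ch-1} forcing $U_f^{\lambda_{A,f}}$ to be constant $\lambda_{A,f}$-a.e., and via n.e.\ equality on $A$) and $\omega^A$-a.e., and the energy-principle identity $\|\lambda_{A,f}-\omega^A\|^2=\|\lambda_{A,f}\|^2-2\langle\lambda_{A,f},\omega^A\rangle+\|\omega^A\|^2$. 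Since all measures involved are of finite energy (Theorem \ref{th1} gives $\omega^A\in\mathcal E^+(A)$, and $\lambda_{A,f}\in\breve{\mathcal E}^+(A)$ by hypothesis), Fubini's theorem \cite[Section~V.8, Theorem~1]{B2} and the bilinearity of $\langle\cdot,\cdot\rangle$ apply without obstruction, so once the bookkeeping is set up correctly the result drops out. Frostman's maximum principle enters only through Theorem \ref{thap}(iii$_1$) — guaranteeing $\lambda_{A,f}$ actually exists in the regime $\omega^A(X)>1$ — and, if needed, through \eqref{in-pot} to keep the weighted potentials finite where required.
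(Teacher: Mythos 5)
Your argument is correct in substance, but it takes a genuinely different route from the paper, and one step you anticipate as an identity is in fact only a one-sided inequality. The paper handles the three cases separately: \eqref{C1'} by the computation $c_{A,f}=\int(U^{\omega^A}-U^\omega)\,d\omega^A=0$ (its (\ref{c0ag}), which you also give); \eqref{C2'} by recycling the bound $C_\xi<0$ from the proof of Theorem~\ref{thap}(iii$_1$), which rests on Deny's principle of positivity of mass; and \eqref{C3'} via Theorem~\ref{thap''}, i.e.\ the representation $\lambda_{A,f}=\omega^A+c_{A,f}\gamma_A$ with $c_{A,f}\geqslant0$, ruling out $c_{A,f}=0$ because it would force $\lambda_{A,f}=\omega^A$ against $\omega^A(X)<1$. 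Your route treats both strict cases uniformly by testing \eqref{ch-1}--\eqref{ch-2} against $\omega^A$, and it is more self-contained: it needs neither Deny's principle, nor the equilibrium measure $\gamma_A$, nor the imported Theorem~1.6 of \cite{Z-Oh}; Frostman's principle enters only through the existence hypothesis. The correction: integrating $U_f^{\lambda_{A,f}}\geqslant c_{A,f}$ (n.e.\ on $A$, hence $\omega^A$-a.e.\ by Lemma~\ref{l1}) against $\omega^A$ gives $\langle\lambda_{A,f},\omega^A\rangle-\|\omega^A\|^2\geqslant c_{A,f}\,\omega^A(X)$, while \eqref{ch-2} integrated against $\lambda_{A,f}$ gives $c_{A,f}=\|\lambda_{A,f}\|^2-\langle\lambda_{A,f},\omega^A\rangle$ (both times replacing $U^\omega$ by $U^{\omega^A}$ a.e.\ for the relevant measure, via Theorem~\ref{th1}(i) and Lemma~\ref{l1}); subtracting yields only
\begin{equation*}
c_{A,f}\bigl(\omega^A(X)-1\bigr)\leqslant-\|\lambda_{A,f}-\omega^A\|^2\leqslant0,
\end{equation*}
not equality, since equality would require $U_f^{\lambda_{A,f}}=c_{A,f}$ to hold $\omega^A$-a.e., which is not available in the regime $\omega^A(X)>1$. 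This does not damage the proof: when $\omega^A(X)\ne1$ the total masses of $\lambda_{A,f}$ and $\omega^A$ differ, so $\lambda_{A,f}\ne\omega^A$ and the right-hand side is strictly negative by the energy principle, forcing $c_{A,f}<0$ or $c_{A,f}>0$ according to the sign of $\omega^A(X)-1$, exactly as you intended.
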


\begin{remark} We note by substituting (\ref{eqq}) into (\ref{ch-3}) that (\ref{C1'}) remains valid even if Frostman's maximum principle is replaced by Ugaheri's maximum principle.
\end{remark}

\subsection{Convergence theorems for $\lambda_{A,f}$ and $c_{A,f}$} We next analyze the continuity of $\lambda_{A,f}$ and $c_{A,f}$ under the approximation of $A$ by monotone families of sets. See also Corollaries~\ref{cor-cont} and \ref{cor-cont''} below, specifying the limit relations (\ref{e-contG2}) and (\ref{e-contG2''}).

\begin{theorem}\label{th-contG1}
Under the hypotheses of\/ {\rm(i$_1$)} or {\rm(iii$_1$)} in Theorem~{\rm\ref{thap}},
\begin{equation}\label{e-contG1}
 \lambda_{K,f}\to\lambda_{A,f}\quad\text{strongly and vaguely in $\mathcal E^+(A)$ as $K\uparrow A$},
\end{equation}
and moreover
\begin{equation}\label{e-contG2}
\lim_{K\uparrow A}\,c_{K,f}=c_{A,f}.
\end{equation}
\end{theorem}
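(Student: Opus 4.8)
The plan is to prove Theorem~\ref{th-contG1} by reducing the two limit relations to the corresponding facts about the inner balayage $\omega^K$ established in Theorem~\ref{th1}(iii), together with the explicit description of $\lambda_{A,f}$ and $c_{A,f}$ available in the two cases of the hypotheses. Throughout, recall that under the hypotheses of (i$_1$) or (iii$_1$) the minimizer $\lambda_{A,f}$ exists, and likewise $\lambda_{K,f}$ exists for all sufficiently large $K\in\mathfrak C_A$ (since then $c(K)<\infty$, so (i$_1$) applies to $K$). The key structural input is the decomposition of $\lambda_{A,f}$ relative to the inner equilibrium measure $\gamma_A$: when $\omega^A(X)\le 1$ one expects, by the now-standard argument using Theorem~\ref{th1}(i) and the variational characterization (\ref{ch-1})--(\ref{ch-2}), that
\begin{equation*}
\lambda_{A,f}=\omega^A+\frac{1-\omega^A(X)}{c_*(A)}\,\gamma_A,
\end{equation*}
with $\gamma_A$ the inner equilibrium measure on $A$ normalized by $\gamma_A(X)=c_*(A)$ (this is the content one extracts from Proposition~\ref{bal-tm} and the solvability analysis), and when $\omega^A(X)\ge1$ and $c_*(A)<\infty$ a similar formula with a signed coefficient; in the degenerate subcase $\omega^A(X)=1$ one has simply $\lambda_{A,f}=\omega^A$ by (\ref{eqq}). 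The same formulas hold with $A$ replaced by any large $K\in\mathfrak C_A$.

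The first main step is to prove the strong convergence $\lambda_{K,f}\to\lambda_{A,f}$. For this I would combine three ingredients: first, $\omega^K\to\omega^A$ strongly in $\mathcal E^+$ by Theorem~\ref{th1}(iii), relation (\ref{c1}); second, $\omega^K(X)\to\omega^A(X)$, which follows from the vague convergence (\ref{c2}) together with the uniform mass bound (\ref{mass})--(\ref{emm'}) (vague lower semicontinuity of $\mu\mapsto\mu(X)$ gives "$\le$", while for "$\ge$" one uses that the total masses are bounded by $M\omega(X)$ and that strong convergence controls them, or invokes Proposition~\ref{bal-mon}-type monotonicity); third, the known convergence of inner equilibrium measures $\gamma_K\to\gamma_A$ strongly in $\mathcal E^+$ as $K\uparrow A$ (a classical consequence of the perfectness of $\kappa$ and the definition of inner capacity, cf.\ \cite[Section~4.1]{F1}), together with $c_*(K)\uparrow c_*(A)$. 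Substituting these into the displayed formula for $\lambda_{K,f}$ and passing to the limit term by term yields the strong convergence (\ref{e-contG1}); the vague convergence then comes for free since $\kappa$ is perfect, so strong convergence in $\mathcal E^+$ implies vague convergence to the same limit. One must handle the case split according to whether $\omega^A(X)<1$, $=1$, or $>1$, but the argument is uniform: in the boundary case $\omega^A(X)=1$ it degenerates to $\lambda_{K,f}=\omega^K\to\omega^A=\lambda_{A,f}$ directly via (\ref{eqq}) and (\ref{c1}).

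The second step is the convergence of the equilibrium constants (\ref{e-contG2}). Here I would start from the definition (\ref{ch-3}), $c_{K,f}=\int U_f^{\lambda_{K,f}}\,d\lambda_{K,f}=\|\lambda_{K,f}\|^2-2\int U^\omega\,d\lambda_{K,f}+\text{(cross terms)}$; more cleanly, using $U_f^{\lambda_{K,f}}=$ const $\lambda_{K,f}$-a.e.\ and the definition of $I_f$, one has $c_{K,f}=I_f(\lambda_{K,f})+\int U^{\lambda_{K,f}}\,d\lambda_{K,f}-\cdots$; the most economical route is the identity $c_{K,f}=I_f(\lambda_{K,f})-\langle\lambda_{K,f},\omega\rangle+\langle\lambda_{K,f},\lambda_{K,f}\rangle$ arranged so that every term is continuous under strong convergence of $\lambda_{K,f}$ plus the bound on total masses. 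Concretely: $I_f(\lambda_{K,f})=w_f(K)\to w_f(A)$ (the net $(w_f(K))_{K\in\mathfrak C_A}$ is decreasing and converges to $w_f(A)$ by the same monotone-approximation argument as for $\widehat w_f$ in the proof of Theorem~\ref{th1}, together with the strong convergence just proved giving the reverse inequality $\limsup w_f(K)\le I_f(\lambda_{A,f})$ via $\liminf$-semicontinuity of $I$ and vague continuity of $\int f\,d\mu$ on a set of uniformly bounded mass); the inner products $\langle\lambda_{K,f},\lambda_{K,f}\rangle\to\langle\lambda_{A,f},\lambda_{A,f}\rangle$ by strong convergence; and $\langle\lambda_{K,f},\omega\rangle=\int U^\omega\,d\lambda_{K,f}\to\int U^\omega\,d\lambda_{A,f}$ — this last limit being the one place where some care is needed, since $U^\omega$ need not be continuous on all of $X$.

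The hard part will be exactly that last limit $\int U^\omega\,d\lambda_{K,f}\to\int U^\omega\,d\lambda_{A,f}$, i.e.\ passing the (merely lower semicontinuous, possibly unbounded) potential $U^\omega$ through the vague limit. The clean way around it is to avoid vague convergence for this term and instead use strong convergence together with the symmetry of the energy form: writing $\int U^\omega\,d\lambda_{K,f}=\langle\omega^{\,?},\lambda_{K,f}\rangle$ is not directly available since $\omega\notin\mathcal E^+$ in general, but one can use that $\lambda_{K,f}\in\mathcal E^+(K)$ together with (\ref{eqpr3})-type reasoning to replace $U^\omega$ by $U^{\omega^K}$ under $\lambda_{K,f}$-integration, namely $\int U^\omega\,d\lambda_{K,f}=\int U^{\omega^K}\,d\lambda_{K,f}=\langle\omega^K,\lambda_{K,f}\rangle$, which is jointly strongly continuous in the pair $(\omega^K,\lambda_{K,f})$ and hence converges to $\langle\omega^A,\lambda_{A,f}\rangle=\int U^{\omega^A}\,d\lambda_{A,f}=\int U^\omega\,d\lambda_{A,f}$, the last equality again by (\ref{eqpr3}) with $\mu:=\lambda_{A,f}\in\mathcal E^+(A)$ and Remark~\ref{rem-int}. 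With this substitution in place, every term defining $c_{K,f}$ is manifestly continuous along the net, and (\ref{e-contG2}) follows. I would also remark that the monotonicity of $K\mapsto c_{K,f}$ is not needed, but can be noted as a byproduct via Proposition~\ref{bal-rest}.
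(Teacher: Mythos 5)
There is a genuine gap in your first step. You propose to obtain the strong convergence $\lambda_{K,f}\to\lambda_{A,f}$ by passing to the limit term by term in a representation $\lambda_{\cdot,f}=\omega^{\cdot}+\frac{1-\omega^{\cdot}(X)}{c_*(\cdot)}\gamma_{\cdot}$. But that representation (formula (\ref{RRR})) is established in the paper only under the hypotheses of Theorem~\ref{thap''}, namely Frostman's maximum principle \emph{together with} $\omega^A(X)\leqslant1$, whereas Theorem~\ref{th-contG1} must cover (i$_1$), where Frostman's principle is not assumed (so (\ref{RRR}) is unavailable even for the compact sets $K$, for which in general one only knows $\omega^K(X)\leqslant M\,\omega^A(X)$ with $M\geqslant1$), and (iii$_1$), where $\omega^A(X)>1$. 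In the latter case your proposed ``similar formula with a signed coefficient'' is false in general: $\omega^A-t\gamma_A$ with $t>0$ need not be a positive measure (take $A$ a union of two distant compacta and $\omega$ a large point charge near one of them; the minimizer then solves an obstacle-type problem in which $U_f^{\lambda_{A,f}}=c_{A,f}$ only on $S(\lambda_{A,f})$, not n.e.\ on $A$). Moreover (iii$_1$) allows $c_*(A)=\infty$, in which case the formula is meaningless. So the route via an explicit decomposition cannot prove (\ref{e-contG1}) in the generality claimed.

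The paper's argument avoids this entirely: since $w_f(K)\downarrow w_f(A)$ as $K\uparrow A$ (relation (\ref{Kcont})), the net $(\lambda_{K,f})_{K\in\mathfrak C_A}$ is minimizing, i.e.\ belongs to $\mathbb M_f(A)$, and hence by Lemma~\ref{au-1} (whose proof rests on the duality $f\leftrightarrow\tilde f=-U^{\omega^A}$, the convexity of $\breve{\mathcal E}^+(A)$, and the parallelogram identity, which make every minimizing net strong Cauchy) it converges strongly and vaguely to the extremal measure $\xi_{A,f}$, which equals $\lambda_{A,f}$ by Lemma~\ref{au-2} once $\lambda_{A,f}$ is known to exist. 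This works uniformly under (i$_1$) and (iii$_1$). Your second step, by contrast, is essentially correct and coincides with the paper's treatment (cf.\ (\ref{mnet4'}) and (\ref{mnet5})): $\|\lambda_{K,f}\|^2\to\|\lambda_{A,f}\|^2$ by strong convergence, and $\int U^\omega\,d\lambda_{K,f}=\langle\omega^K,\lambda_{K,f}\rangle\to\langle\omega^A,\lambda_{A,f}\rangle=\int U^\omega\,d\lambda_{A,f}$ via (\ref{eqpr3}) and Lemma~\ref{l1}; your device of replacing $U^\omega$ by $U^{\omega^K}$ under $\lambda_{K,f}$-integration is sound (the paper uses $U^{\omega^A}$, which is marginally simpler since one argument of the inner product is then fixed). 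If you replace your first step by the minimizing-net argument, the proof goes through.
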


\begin{theorem}\label{th-contG2}Consider a decreasing sequence $(A_j)_{j\in\mathbb N}$ of quasiclosed sets with the intersection $A$ of nonzero inner capacity, and a measure $\omega\in\mathfrak M^+$, $\omega\ne0$, such that either $\omega\in\mathcal E^+$, or  {\rm(a)--(c)} with $A_1$ in place of $A$ hold true. Then:
\begin{itemize}
\item[{\rm(i$_2$)}] If $c_*(A_1)<\infty$, then
\begin{equation}\label{e-contG1''}
 \lambda_{A_j,f}\to\lambda_{A,f}\quad\text{strongly and vaguely in $\mathcal E^+$},
\end{equation}
and moreover
\begin{equation}\label{e-contG2''}
\lim_{j\to\infty}\,c_{A_j,f}=c_{A,f}.
\end{equation}
\item[{\rm(ii$_2$)}] If\/ Frostman's maximum principle is met, then {\rm(\ref{e-contG1''})} and {\rm(\ref{e-contG2''})} remain valid whenever $\omega^A(X)>1$.
\end{itemize}
\end{theorem}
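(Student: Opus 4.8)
The plan is to follow the blueprint of Proposition~\ref{bal-mon}, but with the minimization now taking place over the mass-normalized classes $\breve{\mathcal E}^+(A_j)$ rather than over $\mathcal E^+_f(A_j)$, and to reduce both assertions to the single limit relation $\lim_j w_f(A_j)=w_f(A)$. The starting point is existence of all the relevant minimizers. In case (i$_2$) we have $c_*(A_j)\le c_*(A_1)<\infty$ and $c_*(A)<\infty$, so $\lambda_{A_j,f}$ and $\lambda_{A,f}$ exist by Theorem~\ref{thap}(i$_1$). In case (ii$_2$), a countable intersection of quasiclosed sets is quasiclosed (Fuglede \cite[Lemma~2.3]{F71}), hence $A\subset A_j$ allows Proposition~\ref{bal-rest} to give $\omega^A=(\omega^{A_j})^A$; combined with (\ref{emm'}) and Frostman's maximum principle ($M_{X,\kappa}=1$) this yields $\omega^{A_j}(X)\ge\omega^A(X)>1$, so the minimizers exist by Theorem~\ref{thap}(iii$_1$). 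Since both $\lambda_{A_j,f}$ and $\lambda_{A,f}$ lie in the convex class $\breve{\mathcal E}^+(A_j)$ (recall $A\subset A_j$), the parallelogram identity in $\mathcal E$, applied exactly as in (\ref{Ca}), gives
\[
 \|\lambda_{A_j,f}-\lambda_{A,f}\|^2\le 2I_f(\lambda_{A_j,f})+2I_f(\lambda_{A,f})-4w_f(A_j)=2\bigl(w_f(A)-w_f(A_j)\bigr).
\]
Because $c_{A_j,f}=\|\lambda_{A_j,f}\|^2+\int f\,d\lambda_{A_j,f}=w_f(A_j)-\int f\,d\lambda_{A_j,f}$, and $\int f\,d\lambda_{A_j,f}\to\int f\,d\lambda_{A,f}$ once strong convergence is established (see below), both (\ref{e-contG1''}) and (\ref{e-contG2''}) will follow as soon as we prove $\lim_j w_f(A_j)=w_f(A)$.

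To obtain this limit I would first produce the strong limit of $(\lambda_{A_j,f})$. The net $\bigl(w_f(A_j)\bigr)$ increases with $j$ (the $A_j$ decreasing) and satisfies $\widehat w_f(A_1)\le w_f(A_j)\le w_f(A)<\infty$, cf.\ (\ref{ww}) and (\ref{hatw}); hence it is Cauchy in $\mathbb R$. For $i\le j$ one has $\lambda_{A_i,f},\lambda_{A_j,f}\in\breve{\mathcal E}^+(A_i)$, and the same parallelogram computation gives $\|\lambda_{A_i,f}-\lambda_{A_j,f}\|^2\le 2w_f(A_j)-2w_f(A_i)$, so $(\lambda_{A_j,f})$ is strong Cauchy. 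As $\lambda_{A_j,f}\in\mathcal E^+(A_k)$ for all $j\ge k$ and each cone $\mathcal E^+(A_k)$ is strongly complete, the strong limit $\mu_0$ lies in $\bigcap_k\mathcal E^+(A_k)=\mathcal E^+(A)$ (because $A^c=\bigcup_k(A_k)^c$ is $\mu_0$-negligible, cf.\ \cite[Section~IV.4, Proposition~8]{B2}), and $\lambda_{A_j,f}\to\mu_0$ both strongly and vaguely. Strong continuity of the inner product, together with the representation $\int f\,d\mu=-\langle\omega,\mu\rangle$ when $\omega\in\mathcal E^+$, resp.\ $\int f\,d\mu=-\langle\omega^{A_1},\mu\rangle$ for $\mu\in\mathcal E^+(A_1)$ when $\omega$ only satisfies (a)--(c) with $A_1$ in place of $A$ (use $U^\omega=U^{\omega^{A_1}}$ n.e.\ on $A_1$ and Lemma~\ref{l1}), then yields $I_f(\lambda_{A_j,f})\to I_f(\mu_0)$, i.e.\ $w_f(A_j)\to I_f(\mu_0)$. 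Consequently, once we know $\mu_0(X)=1$, the chain $I_f(\mu_0)\ge w_f(A)\ge\lim_j w_f(A_j)=I_f(\mu_0)$ forces $\lim_j w_f(A_j)=w_f(A)$, and incidentally $\mu_0=\lambda_{A,f}$ by uniqueness of the Gauss minimizer.

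The main obstacle is thus the mass-conservation claim $\mu_0(X)=1$: vague lower semicontinuity of $\mu\mapsto\mu(X)$ gives only $\mu_0(X)\le 1$, and one must rule out escape of mass to infinity along $(\lambda_{A_j,f})$ — precisely the phenomenon responsible for the non-existence statement in Theorem~\ref{thap}(ii$_1$). When Frostman's maximum principle holds and $c_*(A_1)<\infty$, I would argue via the inner equilibrium measure: then $\gamma_{A_1}\in\mathcal E^+(A_1)$ with $U^{\gamma_{A_1}}=1$ n.e.\ on $A_1$ and $U^{\gamma_{A_1}}\le 1$ on $X$, whence $1=\lambda_{A_j,f}(X)\le\int U^{\gamma_{A_1}}\,d\lambda_{A_j,f}=\langle\gamma_{A_1},\lambda_{A_j,f}\rangle\to\langle\gamma_{A_1},\mu_0\rangle=\int U^{\gamma_{A_1}}\,d\mu_0=\mu_0(X)$, the last equality because $\mu_0$ is concentrated on $A\subset A_1$ and $U^{\gamma_{A_1}}=1$ $\mu_0$-a.e.; together with $\mu_0(X)\le1$ this gives $\mu_0(X)=1$. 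In the general situation of (i$_2$) (no Frostman hypothesis) the same estimate with the Ugaheri constant only yields $\mu_0(X)\ge 1/M_{X,\kappa}$, so the argument there must instead invoke the finite-capacity mechanism underlying Theorem~\ref{thap}(i$_1$) — namely that a set of finite inner capacity cannot absorb an escaping unit mass; likewise in (ii$_2$) without finite capacity one uses the surplus $\omega^{A_j}(X)\ge\omega^A(X)>1$ (via Theorem~\ref{thap'} and the potential-domination that accompanies it) to confine $\lambda_{A_j,f}$. I expect this exclusion of mass at infinity to be the only genuinely delicate step; once $\mu_0(X)=1$ and hence $\lim_j w_f(A_j)=w_f(A)$ are in hand, (\ref{e-contG1''}) follows from the displayed inequality and (\ref{e-contG2''}) from the formula $c_{A_j,f}=w_f(A_j)-\int f\,d\lambda_{A_j,f}$ by passing to the limit.
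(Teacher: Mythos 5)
Your overall architecture matches the paper's: establish existence of the minimizers, show $(\lambda_{A_j,f})$ is strong Cauchy via the parallelogram identity and the monotone bounded sequence $\bigl(w_f(A_j)\bigr)$, identify the strong limit $\mu_0\in\mathcal E^+(A)$, and reduce everything to the mass-conservation claim $\mu_0(X)=1$. You correctly flag that claim as the crux — but you do not actually prove it in either of the two cases as stated, and that is a genuine gap. Your equilibrium-measure argument for it is complete only under the \emph{combined} hypothesis "Frostman plus $c_*(A_1)<\infty$": it needs $U^{\gamma_{A_1}}\leqslant1$ everywhere (not merely on $S(\gamma_{A_1})$) to get $\int U^{\gamma_{A_1}}\,d\mu_0=\mu_0(X)$, and as you yourself observe, with only Ugaheri's constant it yields $\mu_0(X)\geqslant1/M_{X,\kappa}$. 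Since (i$_2$) does not assume Frostman and (ii$_2$) does not assume finite capacity, neither case is covered; the phrases "invoke the finite-capacity mechanism" and "use the surplus \dots to confine $\lambda_{A_j,f}$" are gestures, not arguments.

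The two missing arguments are concrete. For (i$_2$), the paper's ingredient is Theorem~\ref{breve}: when $c_*(A_k)<\infty$, the class $\breve{\mathcal E}^+(A_k)$ is itself strongly closed, so the strong limit $\mu_0$ of $(\lambda_{A_j,f})_{j\geqslant k}\subset\breve{\mathcal E}^+(A_k)$ automatically satisfies $\mu_0(X)=1$; no maximum principle is needed. For (ii$_2$), one passes to the limit in the variational inequality $U_f^{\lambda_{A_j,f}}\geqslant c_{A_j,f}$ n.e.\ on $A_j$ (hence n.e.\ on $A$), using Lemmas~\ref{str} and \ref{l2}, to obtain $U_f^{\mu_0}\geqslant C_{\mu_0}:=\lim_j c_{A_j,f}$ n.e.\ on $A$; here $C_{\mu_0}\leqslant0$ because $\omega^{A_j}(X)>1$ forces $c_{A_j,f}<0$ by Theorem~\ref{thap'}. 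The case $C_{\mu_0}=0$ is excluded: it would give $U^{\mu_0}\geqslant U^{\omega^A}$ n.e.\ on $A$, hence $\omega^A$-a.e., hence everywhere by the domination principle, and Deny's principle of positivity of mass \cite[Theorem~2.1]{Z-arx-22} would yield $\mu_0(X)\geqslant\omega^A(X)>1$, contradicting $\mu_0(X)\leqslant1$. With $C_{\mu_0}<0$, integrating the inequality against $\mu_0$ gives $C_{\mu_0}\geqslant C_{\mu_0}\cdot\mu_0(X)$, i.e.\ $\mu_0(X)\geqslant1$, as required. Once $\mu_0(X)=1$ is secured, your concluding chain (identification $\mu_0=\lambda_{A,f}$, the displayed norm estimate, and the limit of $c_{A_j,f}=w_f(A_j)-\int f\,d\lambda_{A_j,f}$) is correct and is a perfectly serviceable alternative to the paper's identification of $\mu_0$ via the characterization (\ref{ch-1}).
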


\subsection{Alternative characterizations of the solution $\lambda_{A,f}$}\label{ssec-alt} Along with the permanent assumptions listed at the beginning of Section~\ref{sec-appl},

$\bullet$ Throughout Sections~\ref{ssec-alt} and \ref{ssec-sup}, we postulate Frostman's maximum principle. Also assume that $\lambda_{A,f}$ exists, or equivalently that (\ref{Fiff}) holds.

Then one can define
\begin{equation}\label{Lambda}\Lambda_{A,f}:=\bigl\{\mu\in\mathcal E^+: \ U_f^\mu\geqslant c_{A,f}\quad\text{n.e.\ on $A$}\bigr\},\end{equation}
$c_{A,f}$ being the (finite) inner $f$-weighted equilibrium constant. (As follows from footnote~\ref{f1}, this definition makes sense.) The class $\Lambda_{A,f}$ is nonempty, for (see Section~\ref{ssec-const})
\begin{equation}\label{LLL'}
\lambda_{A,f}\in\Lambda_{A,f}.
\end{equation}

\begin{theorem}\label{thap''}In addition to the above-mentioned hypotheses, assume that\footnote{As follows from (\ref{emm'}), (\ref{req''}) does hold whenever $\omega(X)\leqslant1/M_{X,\kappa}$, $M_{X,\kappa}\in[1,\infty)$ being the constant appearing in Ugaheri's maximum principle. We also observe by combining (\ref{Fiff}) and (\ref{req''}) that under the assumptions of Theorem~\ref{thap''}, either of $c_*(A)<\infty$ or $\omega^A(X)=1$ must occur.}
\begin{equation}\label{req''}
\omega^A(X)\leqslant1.
\end{equation}
Then the solution $\lambda_{A,f}$ to problem {\rm(\ref{G})} can be represented in the form
\begin{equation}\label{RRR}\lambda_{A,f}=\left\{
\begin{array}{cl}\omega^A+c_{A,f}\gamma_A&\text{if \ $c_*(A)<\infty$},\\
\omega^A&\text{otherwise},\\ \end{array} \right.
\end{equation}
where $\gamma_A$ is the inner equilibrium measure on the set $A$, normalized by $\gamma_A(X)=c_*(A)$. The same $\lambda_{A,f}$ can  be characterized by any one of the following three assertions.
\begin{itemize}
\item[{\rm(i$_3$)}] $\lambda_{A,f}$ is the unique measure of minimum potential in the class $\Lambda_{A,f}$, i.e.
\begin{equation*}U^{\lambda_{A,f}}=\min_{\mu\in\Lambda_{A,f}}\,U^\mu\quad\text{on $X$}.\end{equation*}
\item[{\rm(ii$_3$)}] $\lambda_{A,f}$ is the unique measure of minimum energy norm in the class $\Lambda_{A,f}$, i.e.
\[\|\lambda_{A,f}\|=\min_{\mu\in\Lambda_{A,f}}\,\|\mu\|.\]
\item[{\rm(iii$_3$)}] $\lambda_{A,f}$ is uniquely determined within $\mathcal E^+(A)$ by the equality
\begin{equation*}
U^{\lambda_{A,f}}_f=c_{A,f}\quad\text{n.e.\ on $A$}.
\end{equation*}
Furthermore,
\begin{align}\label{C1}
  c_{A,f}&\geqslant0\quad\text{if\/ $c_*(A)<\infty$},\\
  c_{A,f}&=0\quad\text{otherwise}.\label{C2}
\end{align}
\end{itemize}
\end{theorem}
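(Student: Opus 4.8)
The plan is to first establish the explicit representation~(\ref{RRR}), and then deduce the three characterizations (i$_3$)--(iii$_3$) together with the sign assertions~(\ref{C1})--(\ref{C2}) from it. The starting point is the observation, already recorded in Section~\ref{ssec-const}, that a measure $\mu\in\breve{\mathcal E}^+(A)$ equals $\lambda_{A,f}$ if and only if $U_f^\mu\geqslant c_{A,f}$ n.e.\ on $A$, equivalently $U_f^\mu=c_{A,f}$ $\mu$-a.e. So the whole argument reduces to checking that the right-hand side of~(\ref{RRR}) is a probability measure in $\mathcal E^+(A)$ whose $f$-weighted potential equals $c_{A,f}$ n.e.\ on $A$; uniqueness of the minimizer then forces equality. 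When $c_*(A)<\infty$, write $\lambda:=\omega^A+c_{A,f}\gamma_A$. By Theorem~\ref{th1}(i), $U^{\omega^A}=U^\omega$ n.e.\ on $A$, i.e.\ $U^{\omega^A}+f=0$ n.e.\ on $A$; and by Frostman's maximum principle $U^{\gamma_A}=1$ n.e.\ on $A$ (see \cite[Section~4.1]{F1}). Hence $U_f^\lambda=U^{\omega^A}+c_{A,f}U^{\gamma_A}+f=c_{A,f}$ n.e.\ on $A$, which is exactly~(\ref{ch-1})/(\ref{ch-2}) once we know $\lambda(X)=1$. For the mass, $\lambda(X)=\omega^A(X)+c_{A,f}\,c_*(A)$; the value of $c_{A,f}$ is pinned down by integrating $U_f^\lambda=c_{A,f}$ against $\lambda$ and using Proposition~\ref{bal-tm}, namely $\omega^A(X)=\int U^\omega\,d\gamma_A$, together with $I(\gamma_A)=c_*(A)$ — this gives $c_{A,f}\,c_*(A)=1-\omega^A(X)$, so $\lambda(X)=1$ automatically and, since $\omega^A(X)\leqslant1$ by~(\ref{req''}), we get $c_{A,f}\geqslant0$, which is~(\ref{C1}). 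When $c_*(A)=\infty$, the hypothesis~(\ref{req''}) combined with~(\ref{Fiff}) forces $\omega^A(X)=1$, so $\omega^A\in\breve{\mathcal E}^+(A)$; by Theorem~\ref{th1}(i) its $f$-weighted potential vanishes n.e.\ on $A$, hence $\omega^A=\lambda_{A,f}$ and $c_{A,f}=0$, giving~(\ref{C2}) and~(\ref{eqq}).

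Next I would derive the characterizations. For (iii$_3$): if $\nu\in\mathcal E^+(A)$ satisfies $U_f^\nu=c_{A,f}$ n.e.\ on $A$, then $U^\nu=U^{\omega^A}+c_{A,f}U^{\gamma_A}$ n.e.\ on $A$ (with $\gamma_A$ omitted when $c_*(A)=\infty$, in which case $c_{A,f}=0$), hence $(\nu+\lambda_{A,f})$-a.e.\ by Lemma~\ref{l1}, hence everywhere on $X$ by the domination principle applied to each measure, and so $\nu=\lambda_{A,f}$ by the energy principle. The sign statements~(\ref{C1})--(\ref{C2}) were obtained above. For (i$_3$) and (ii$_3$): any $\mu\in\Lambda_{A,f}$ satisfies $U^\mu\geqslant c_{A,f}-f=c_{A,f}+U^\omega$ n.e.\ on $A$; since $U^{\lambda_{A,f}}=c_{A,f}+U^\omega$ n.e.\ on $A$ by what was just shown, we get $U^\mu\geqslant U^{\lambda_{A,f}}$ n.e.\ on $A$, hence $\lambda_{A,f}$-a.e., hence everywhere on $X$ by the domination principle — this is (i$_3$), and uniqueness of the minimum-potential measure follows by antisymmetry and the energy principle exactly as in the proof of Theorem~\ref{th1}(v). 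For (ii$_3$), integrating $U^\mu\geqslant U^{\lambda_{A,f}}$ against $\lambda_{A,f}$ gives $\langle\mu,\lambda_{A,f}\rangle\geqslant\|\lambda_{A,f}\|^2$, and the Cauchy--Schwarz inequality yields $\|\mu\|\geqslant\|\lambda_{A,f}\|$; dividing by $\|\lambda_{A,f}\|>0$. Uniqueness again follows from the pre-Hilbert structure (the parallelogram identity on the convex class $\Lambda_{A,f}$) together with the energy principle, just as for problem~(\ref{G}) itself.

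The main obstacle is the mass bookkeeping that forces $\lambda(X)=1$ in the case $c_*(A)<\infty$: one must be careful that $\omega^A$ and $\gamma_A$ both lie in $\mathcal E^+(A)$ (guaranteed by Theorem~\ref{th1} and, for $\gamma_A$, by the strong closedness of $\mathcal E^+(A)$ via \cite[Theorem~7.2]{Z-arx-22}), that the Fubini-type interchange $\int U^{\omega^A}\,d\gamma_A=\int U^{\gamma_A}\,d\omega^A$ is licit (this is precisely the content of Proposition~\ref{bal-tm}), and that the n.e.\ identities survive the passage to $\gamma_A$-a.e.\ and $\lambda_{A,f}$-a.e.\ integration — here Lemma~\ref{l1} applied with $E=A$ is the right tool, since both $\gamma_A$ and $\lambda_{A,f}$ are concentrated on $A$ and of finite energy. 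Everything else is a routine combination of the domination and energy principles with the variational characterization~(\ref{ch-1})--(\ref{ch-2}) already at hand. Finally, the very last point — that $\omega^A(X)=1$ whenever $c_*(A)=\infty$ under hypothesis~(\ref{req''}) — is immediate from~(\ref{Fiff}), and this is why the "otherwise" branch of~(\ref{RRR}) reads simply $\omega^A$.
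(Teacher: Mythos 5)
Your proposal is essentially correct, but it takes a genuinely different route from the paper. The paper proves this theorem by \emph{reduction}: it first establishes the idempotency $(\omega^A)^A=\omega^A$ (via the orthogonal-projection description of inner balayage), observes that the dual field $\tilde f=-U^{\omega^A}$ is created by a measure of finite energy lying in $\mathcal E^+(A)$ with $\Lambda_{A,\tilde f}=\Lambda_{A,f}$, $\lambda_{A,\tilde f}=\lambda_{A,f}$, and $c_{A,\tilde f}=c_{A,f}$, and then simply invokes the finite-energy result \cite[Theorem~1.6]{Z-Oh} with $\zeta:=\omega^A$. You instead give a direct, self-contained verification: you exhibit $\omega^A+c\,\gamma_A$ as a probability measure in $\mathcal E^+(A)$ with constant $f$-weighted potential n.e.\ on $A$ and apply the characterization (\ref{ch-1})--(\ref{ch-2}), then derive (i$_3$)--(iii$_3$) from the representation via Lemma~\ref{l1}, the domination principle, the energy principle, and Cauchy--Schwarz; in effect you re-prove the content of \cite[Theorem~1.6]{Z-Oh} in situ, and you never need the idempotency $(\omega^A)^A=\omega^A$. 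All the ingredients you use ($\gamma_A\in\mathcal E^+(A)$ by \cite[Theorem~7.2]{Z-arx-22}, $U^{\gamma_A}=1$ n.e.\ on $A$ under Frostman's principle, $U^{\omega^A}=U^\omega$ n.e.\ on $A$, the passage from n.e.\ identities to $\mu$-a.e.\ ones) are available in the paper, and your deductions of (i$_3$)--(iii$_3$) are sound. One point to tighten: as written, your determination of the constant is slightly circular --- you set $\lambda:=\omega^A+c_{A,f}\gamma_A$ with $c_{A,f}$ already defined through $\lambda_{A,f}$, and then ``pin down'' $c_{A,f}$ by integrating against $\lambda$, which presupposes $\lambda=\lambda_{A,f}$. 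The clean order is to \emph{define} $c:=\bigl(1-\omega^A(X)\bigr)/c_*(A)\geqslant0$ (positivity is exactly where (\ref{req''}) enters, ensuring $\omega^A+c\gamma_A\geqslant0$), check that $\lambda:=\omega^A+c\gamma_A$ lies in $\breve{\mathcal E}^+(A)$ and satisfies $U_f^\lambda=c$ n.e.\ on $A$, conclude $\lambda=\lambda_{A,f}$ from (\ref{ch-1}), and only then read off $c_{A,f}=\int U_f^{\lambda}\,d\lambda=c$, which yields (\ref{C1}). With that reordering your argument is complete; the trade-off is that the paper's proof is shorter but opaque (everything is delegated to \cite{Z-Oh}), whereas yours makes the structure of $\lambda_{A,f}$ and the role of each hypothesis explicit.
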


\begin{corollary}\label{cor-tmass}
 Under the assumptions of Theorem~{\rm\ref{thap''}}, $\lambda_{A,f}$ is of minimum total mass in the class $\Lambda_{A,f}$, i.e.
 \begin{equation*}
 \lambda_{A,f}(X)=\min_{\mu\in\Lambda_{A,f}}\,\mu(X).
 \end{equation*}
\end{corollary}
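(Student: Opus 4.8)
The plan is to read the corollary off immediately from the characterization of $\lambda_{A,f}$ as the measure of minimum potential in $\Lambda_{A,f}$ (Theorem~\ref{thap''}(i$_3$)), combined with Deny's principle of positivity of mass; the latter is at our disposal since Frostman's maximum principle is postulated throughout Section~\ref{ssec-alt}. This is the exact weighted counterpart of the argument already used to prove Proposition~\ref{bal-tot-m}, with Theorem~\ref{thap''}(i$_3$) playing the role that Theorem~\ref{th1}(v) played there.

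First I would note that $\lambda_{A,f}\in\Lambda_{A,f}$ by \eqref{LLL'}, so the infimum of $\mu(X)$ over $\mu\in\Lambda_{A,f}$ does not exceed $\lambda_{A,f}(X)$; it therefore remains to establish the reverse inequality $\lambda_{A,f}(X)\leqslant\mu(X)$ for every $\mu\in\Lambda_{A,f}$. Fixing such a $\mu$, Theorem~\ref{thap''}(i$_3$) gives $U^{\lambda_{A,f}}\leqslant U^\mu$ everywhere on $X$. Since $\lambda_{A,f}\in\mathcal E^+(A)\subset\mathcal E^+$ and $\mu\in\mathcal E^+$ (the latter being built into the definition \eqref{Lambda} of $\Lambda_{A,f}$), Deny's principle of positivity of mass, in the form stated in \cite[Theorem~2.1]{Z-arx-22}, applies and yields $\lambda_{A,f}(X)\leqslant\mu(X)$. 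Combining the two bounds gives the asserted equality.

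There is essentially no substantive obstacle here; the only points requiring a moment's care are purely bookkeeping. One must confirm that the hypotheses of Deny's principle are met, i.e.\ that the kernel enjoys Frostman's maximum principle (assumed) and that the two measures being compared both lie in $\mathcal E^+$ (checked above), and one must use that Theorem~\ref{thap''}(i$_3$) delivers the domination of potentials pointwise on all of $X$ rather than merely n.e.\ or $\mu$-a.e., which is precisely the hypothesis Deny's principle needs.
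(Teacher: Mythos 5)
Your proof is correct and follows exactly the paper's own argument: $\lambda_{A,f}\in\Lambda_{A,f}$ by \eqref{LLL'} gives one inequality, and Theorem~\ref{thap''}(i$_3$) combined with Deny's principle of positivity of mass in the form of \cite[Theorem~2.1]{Z-arx-22} gives the other. The parallel you draw with the proof of Proposition~\ref{bal-tot-m} is precisely the intended one.
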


\begin{remark}
  Unlike to any one of (i$_3$)--(iii$_3$), Corollary~\ref{cor-tmass} cannot serve as an alternative characterization of the solution $\lambda_{A,f}$, for it does not determine $\lambda_{A,f}$ uniquely. On account of Remark~\ref{t-m-nonun}, this is clear from formulas (\ref{RRR}) and (\ref{C1}).
\end{remark}

\begin{corollary}\label{cor-cont}If {\rm(\ref{req''})} is fulfilled, then {\rm(\ref{e-contG2})} can be specified as follows:
\begin{equation}\label{e-contG2'}
c_{K,f}\downarrow c_{A,f}\quad\text{in $\mathbb R$ as $K\uparrow A$}.
\end{equation}
\end{corollary}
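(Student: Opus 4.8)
The plan is to combine the already-established limit relation \eqref{e-contG2} (from Theorem~\ref{th-contG1}) with a monotonicity argument showing that the net $(c_{K,f})_{K\in\mathfrak C_A}$ is actually decreasing under the extra hypothesis \eqref{req''}. Once monotonicity is in hand, a decreasing net in $\mathbb R$ that converges (which we know, by \eqref{e-contG2}) must converge downward to its limit, giving \eqref{e-contG2'}. So the whole task reduces to the monotonicity claim: for $K,K'\in\mathfrak C_A$ with $K\leqslant K'$, one has $c_{K',f}\leqslant c_{K,f}$.

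First I would record what \eqref{req''} buys us at the level of compact exhaustions. Since $\omega^A(X)\leqslant 1$ and, by \eqref{rest'} applied along $\mathfrak C_A$ (or directly from \eqref{c3} giving $U^{\omega^K}\uparrow U^{\omega^A}$ and hence, via the vague lower semicontinuity of total mass, $\omega^K(X)\leqslant\omega^A(X)$), we get $\omega^K(X)\leqslant 1$ for every $K\in\mathfrak C_A$; here I also use that each $K$ has finite capacity, so $c_*(K)<\infty$ and the finite-capacity branch of Theorem~\ref{thap''} applies to $K$. Consequently, by \eqref{RRR}–\eqref{C1} applied to $K$ in place of $A$, the equilibrium constant $c_{K,f}$ is nonnegative and is given by the formula that ties it to $\gamma_K$ and $\omega^K$; more usefully, $c_{K,f}$ admits the variational description $c_{K,f}=\int U_f^{\lambda_{K,f}}\,d\lambda_{K,f}$ together with the characterization \eqref{ch-1}: $U_f^{\lambda_{K,f}}\geqslant c_{K,f}$ n.e.\ on $K$.

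The core step is then to show $c_{K',f}\leqslant c_{K,f}$ when $K\subset K'$. I would argue from the characterization in (iii$_3$) of Theorem~\ref{thap''}, or more elementarily as follows. Because \eqref{req''} holds for $A$, it holds for every $K\in\mathfrak C_A$ as noted, so by Theorem~\ref{thap''}(iii$_3$) the $f$-weighted potential of $\lambda_{K,f}$ equals $c_{K,f}$ n.e.\ on $K$; since $K\subset K'$ and $\lambda_{K,f}\in\mathcal E^+(K)\subset\mathcal E^+(K')$, testing the minimality of $\lambda_{K',f}$ against the competitor $\lambda_{K,f}\in\breve{\mathcal E}^+(K')$ in problem \eqref{G} for $K'$, and using $I_f(\lambda_{K,f})=c_{K,f}+\int f\,d\lambda_{K,f}$-type identities together with \eqref{ch-2}–\eqref{ch-3}, one extracts $c_{K',f}\leqslant c_{K,f}$. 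The cleanest route is probably: the relation $U_f^{\lambda_{K',f}}=c_{K',f}$ n.e.\ on $K'$, hence n.e.\ on $K$, hence $\lambda_{K,f}$-a.e.\ (Lemma~\ref{l1}); integrating against $\lambda_{K,f}$ and using $\lambda_{K,f}(X)=1$ gives $\int U_f^{\lambda_{K',f}}\,d\lambda_{K,f}=c_{K',f}$; but the left side is $I(\lambda_{K',f},\lambda_{K,f})+\int f\,d\lambda_{K,f}\geqslant c_{K,f}$ by an application of Theorem~\ref{thap} characterization \eqref{ch-1} for $K$ combined with Cauchy--Schwarz (or with the minimality defining $\lambda_{K,f}$).

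The step I expect to be the main obstacle is pinning down the correct inequality direction in that last integral estimate: one must be careful that it is \emph{$\lambda_{K',f}$} whose weighted potential is being integrated against \emph{$\lambda_{K,f}$}, and that the symmetry of mutual energy plus \eqref{ch-1} for $K$ (not $K'$) yields $\geqslant c_{K,f}$ rather than the reverse. Alternatively, if this direct computation proves delicate, one can bypass it entirely: \eqref{e-contG2} already gives convergence of $(c_{K,f})$ to $c_{A,f}$, so it suffices to check that the net is eventually monotone, and for that one may invoke \eqref{RRR}, \eqref{C1}, Proposition~\ref{bal-rest} (for the monotonicity $U^{\omega^K}\leqslant U^{\omega^{K'}}$) and \cite[Theorem~7.2]{Z-arx-22} (for the monotonicity of the inner equilibrium constant of $K$ under enlargement), assembling $c_{K,f}$ from $\omega^K$ and $\gamma_K$ and reading off monotonicity from the monotonicity of each ingredient. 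Either way, once monotonicity and convergence are both secured, \eqref{e-contG2'} follows immediately. $\qed$
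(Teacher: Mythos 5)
Your primary (``cleanest'') route contains a genuine error of direction, and its key inequality is in fact false. Since $U_f^{\lambda_{K',f}}=c_{K',f}$ n.e.\ on $K'$, hence n.e.\ on $K$, hence $\lambda_{K,f}$-a.e.\ (Lemma~\ref{l1}), integrating against the unit measure $\lambda_{K,f}$ gives exactly $\int U_f^{\lambda_{K',f}}\,d\lambda_{K,f}=c_{K',f}$; so the inequality you then assert, $\int U_f^{\lambda_{K',f}}\,d\lambda_{K,f}\geqslant c_{K,f}$, is literally equivalent to $c_{K',f}\geqslant c_{K,f}$~--- the reverse of the monotonicity you are trying to prove, and false in general (take two concentric balls for the Newtonian kernel and a small point charge outside). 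It cannot be rescued by (\ref{ch-1}) for $K$: that characterization controls $\int U_f^{\lambda_{K,f}}\,d\mu$ only for $\mu$ concentrated on $K$, whereas $\lambda_{K',f}$ is not, so symmetry of the mutual energy does not deliver the bound you want. A second, smaller slip: you justify $\omega^K(X)\leqslant\omega^A(X)$ by ``vague lower semicontinuity of total mass'', but that inequality goes the other way (it yields $\omega^A(X)\leqslant\liminf_K\omega^K(X)$). The correct route is $\omega^K=(\omega^A)^K$ (Proposition~\ref{bal-rest}) together with (\ref{emm'}) and $M=1$ (Frostman's principle is in force throughout Section~\ref{ssec-alt}), giving $\omega^K(X)\leqslant\omega^A(X)\leqslant1$; the potential inequality $U^{\omega^K}\leqslant U^{\omega^A}$ alone would additionally require Deny's principle of positivity of mass.

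Your fallback sketch, by contrast, is essentially the paper's proof and does work once made precise: since $\omega^K(X)\leqslant1$ and $c(K)<\infty$, Theorem~\ref{thap''} applies to each $K\in\mathfrak C_A$, and taking total masses in (\ref{RRR}) yields the explicit formula $c_{K,f}=\bigl(1-\omega^K(X)\bigr)/c(K)$. For $K\leqslant K'$ one has $\omega^K(X)=(\omega^{K'})^K(X)\leqslant\omega^{K'}(X)$ (again Proposition~\ref{bal-rest} plus (\ref{emm'}) with $M=1$) and $c(K)\leqslant c(K')$, so the nonnegative numerator decreases while the denominator increases, whence $c_{K',f}\leqslant c_{K,f}$. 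Combined with the convergence (\ref{e-contG2}) already established in Theorem~\ref{th-contG1}, this gives (\ref{e-contG2'}). So the corollary is salvageable along your alternative route, but the argument you flagged as cleanest should be discarded.
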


\begin{corollary}\label{cor-cont''}Under the hypotheses of Theorem~{\rm\ref{th-contG2}(i$_2$)}, assume moreover that $\omega^{A_1}(X)\leqslant1$. Then {\rm(\ref{e-contG2''})} can be specified as follows:
\begin{equation}\label{e-contG2'''}
c_{A_j,f}\uparrow c_{A,f}\quad\text{in $\mathbb R$ as $j\to\infty$}.
\end{equation}
\end{corollary}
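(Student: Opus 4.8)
The plan is to deduce the corollary from the closed-form expression for the weighted equilibrium constant supplied by Theorem~\ref{thap''}, so that monotonicity of $(c_{A_j,f})$ becomes a consequence of monotonicity of $(\omega^{A_j}(X))$ and of $(c_*(A_j))$ along the decreasing family. Recall that Frostman's maximum principle is postulated throughout Section~\ref{ssec-alt}, so $M_{X,\kappa}=1$.

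First I would verify that each set $A_j$ satisfies all the hypotheses of Theorem~\ref{thap''}. Since $A\subset A_{j+1}\subset A_j\subset A_1$ and $c_*(A_1)<\infty$, monotonicity of inner capacity gives $0<c_*(A)\leqslant c_*(A_j)\leqslant c_*(A_1)<\infty$; in particular (\ref{Fiff}) holds for $A_j$ and $\lambda_{A_j,f}$ exists by Theorem~\ref{thap}(i$_1$). Each $A_j$ is quasiclosed, so $\mathcal E^+(A_j)$ is strongly closed, and the condition on $\omega$ imposed for $A_1$ passes to $A_j$. It remains to check the mass bound (\ref{req''}), i.e.\ $\omega^{A_j}(X)\leqslant1$. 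Here I would use that $A_{j+1}\subset A_j$ forces $\Gamma_{A_j,\omega}\subset\Gamma_{A_{j+1},\omega}$ (an exceptional set of zero inner capacity inside $A_j$ still has zero inner capacity after intersecting with $A_{j+1}$), whence by Proposition~\ref{bal-tot-m}
\[\omega^{A_{j+1}}(X)=\min_{\mu\in\Gamma_{A_{j+1},\omega}\cap\mathcal E^+}\mu(X)\leqslant\min_{\mu\in\Gamma_{A_j,\omega}\cap\mathcal E^+}\mu(X)=\omega^{A_j}(X).\]
Thus $(\omega^{A_j}(X))_{j\in\mathbb N}$ is nonincreasing, and combined with the standing assumption $\omega^{A_1}(X)\leqslant1$ this yields $\omega^{A_j}(X)\leqslant1$ for all $j$, which is (\ref{req''}).

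With Theorem~\ref{thap''} now available for every $A_j$, formula (\ref{RRR}) (its first line, as $c_*(A_j)<\infty$) gives $\lambda_{A_j,f}=\omega^{A_j}+c_{A_j,f}\gamma_{A_j}$ with $\gamma_{A_j}(X)=c_*(A_j)$. Passing to total masses and using $\lambda_{A_j,f}(X)=1$ I would obtain the explicit formula
\[c_{A_j,f}=\frac{1-\omega^{A_j}(X)}{c_*(A_j)}\geqslant0,\]
consistent with (\ref{C1}). In this expression the numerator $1-\omega^{A_j}(X)$ is nonnegative and nondecreasing in $j$, while $1/c_*(A_j)$ is positive and nondecreasing in $j$ (as $c_*(A_j)$ is nonincreasing and finite); being the product of two nonnegative nondecreasing sequences, $(c_{A_j,f})$ is nondecreasing. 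Finally, Theorem~\ref{th-contG2}(i$_2$) already gives $c_{A_j,f}\to c_{A,f}$, so together with the monotonicity just obtained we conclude $c_{A_j,f}\uparrow c_{A,f}$, which is (\ref{e-contG2'''}).

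The only real work lies in the two monotonicities and the representation of $c_{A_j,f}$ through (\ref{RRR}); everything else is bookkeeping with results proved earlier. The mild subtlety --- and the place where the standing assumption of Section~\ref{ssec-alt} is essential --- is that monotonicity of the swept masses $\omega^{A_j}(X)$, unlike that of the inner capacities, genuinely uses Frostman's maximum principle, here via Proposition~\ref{bal-tot-m} (alternatively, one could argue through the ``balayage with a rest'' identity $\omega^{A_{j+1}}=(\omega^{A_j})^{A_{j+1}}$ of Proposition~\ref{bal-rest} together with (\ref{emm'}) and $M_{X,\kappa}=1$).
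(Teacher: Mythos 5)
Your proof is correct and follows essentially the same route as the paper: reduce to the explicit formula $c_{A_j,f}=\bigl(1-\omega^{A_j}(X)\bigr)/c_*(A_j)$ via Theorem~\ref{thap''} and formula (\ref{RRR}), show the swept masses $\omega^{A_j}(X)$ decrease (so that (\ref{req''}) propagates from $A_1$ to every $A_j$ and the quotient is nondecreasing), and combine with the convergence (\ref{e-contG2''}) already supplied by Theorem~\ref{th-contG2}(i$_2$). The only deviation is that you obtain the monotonicity $\omega^{A_{j+1}}(X)\leqslant\omega^{A_j}(X)$ from Proposition~\ref{bal-tot-m} and the inclusion $\Gamma_{A_j,\omega}\subset\Gamma_{A_{j+1},\omega}$, whereas the paper uses the identity $\omega^{A_j}=(\omega^{A_1})^{A_j}$ of Proposition~\ref{bal-rest} together with (\ref{emm'}) and $M_{X,\kappa}=1$ --- precisely the alternative you mention at the end --- so both steps are legitimate under the standing Frostman assumption.
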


\subsection{On the support of $\lambda_{A,f}$}\label{ssec-sup} Ohtsuka \cite[p.~284]{O} asked when for compact $K\subset X$,
\begin{equation}\label{eq-O}S(\lambda_{K,f})=K.\end{equation}
The following example shows that for the $\alpha$-Riesz kernel, an answer depends on $\alpha$.

\begin{example}
  Assume $X:=\mathbb R^n$, $n\geqslant2$, $\kappa(x,y):=|x-y|^{\alpha-n}$, where $\alpha<n$, $\alpha\in(0,2]$, and consider compact $K\subset\mathbb R^n$ such that for every $x\in K$ and every open neighborhood $U_x$ of $x$, we have $c(K\cap U_x)>0$.\footnote{Such $K$ is said to coincide with its {\it reduced kernel}, cf.\ \cite[p.~164]{L}.} Fix $y\in K^c$, and define $\omega:=\varepsilon_y$. If moreover $\alpha<2$, then $\omega^K(\mathbb R^n)<1$ (see e.g.\ \cite[Corollary~5.3]{Z-bal2}), and hence, by virtue of Theorems~\ref{thap'} and \ref{thap''}, $\lambda_{K,f}$ can be represented in the form
  \[\lambda_{K,f}=\omega^K+c_{K,f}\gamma_K,\quad\text{where \ }c_{K,f}>0.\]
 Noting that $S(\omega^K)=S(\gamma_K)=K$,\footnote{More about the supports of Riesz equilibrium and swept measures, also for noncompact sets, see \cite[Theorems~7.2, 8.5]{Z-bal} and \cite[Corollary~5.4]{Z-bal2}.\label{fsup}} we thus get (\ref{eq-O}) as required.  However, this is no longer so if $\alpha=2$. In particular, if $K^c$ is connected while ${\rm Int}_{\mathbb R^n}K\ne\varnothing$, then $S(\lambda_{K,f})=\partial_{\mathbb R^n}K$ (cf.\ footnote~\ref{fsup}), and hence $S(\lambda_{K,f})$ is a {\it proper} subset of $K$.
  \end{example}

In applications of problem (\ref{G}) to constructive function theory, it is often useful to know when for noncompact $A$, $S(\lambda_{A,f})$ is compact (see e.g.\ \cite{Dr0,Oo,ST}).

\begin{theorem}\label{thapsupport}Assume that $A$ is unbounded, $\omega^A(X)>1$, and
\begin{equation}\label{LLL}
\lim_{x\to\infty_X, \ x\in A}\,U^\omega(x)=0,
\end{equation}
$\infty_X$ being the Alexandroff point of $X$. Then $\lambda_{A,f}$ is of compact support.\footnote{$\lambda_{A,f}$ does exist because of the hypothesis $\omega^A(X)>1$, see Theorem~\ref{thap}(iii$_1$).}\end{theorem}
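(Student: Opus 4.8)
The plan is to show that under the hypotheses, the solution $\lambda_{A,f}$ is concentrated on a compact set by combining the representation of $\lambda_{A,f}$ via balayage (Theorem~\ref{thap}(iii$_1$) and its proof, together with Theorem~\ref{thap'}) with the decay assumption (\ref{LLL}) and the variational characterization (\ref{ch-1}). Since $\omega^A(X)>1$, we know $\lambda_{A,f}$ exists and $c_{A,f}<0$ by (\ref{C2'}). The key is that the $f$-weighted potential $U_f^{\lambda_{A,f}}=U^{\lambda_{A,f}}-U^\omega$ equals the negative constant $c_{A,f}$ $\lambda_{A,f}$-a.e.\ on $X$ (by (\ref{ch-2})); I want to argue that this forces $S(\lambda_{A,f})$ to avoid a neighborhood of $\infty_X$ in $A$.

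First I would record the following: by (\ref{ch-1}) and (\ref{ch-2}), $U^{\lambda_{A,f}}(x)=U^\omega(x)+c_{A,f}$ for $\lambda_{A,f}$-a.e.\ $x$, while $U^{\lambda_{A,f}}\geqslant U^\omega+c_{A,f}$ holds n.e.\ on $A$. Now fix $\varepsilon:=-c_{A,f}/2>0$. By the decay hypothesis (\ref{LLL}), there is a compact $K_0\subset X$ such that $U^\omega(x)<\varepsilon$ for all $x\in A\setminus K_0$. The plan is to show $\lambda_{A,f}$ assigns no mass to $A\setminus K_0$ (outside a set of inner capacity zero, which carries no mass of a measure in $\mathcal E^+$ by (\ref{iff}) and Lemma~\ref{l1}). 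Indeed, on the set where $U_f^{\lambda_{A,f}}=c_{A,f}$, i.e.\ $\lambda_{A,f}$-a.e., we would have $U^{\lambda_{A,f}}(x)=U^\omega(x)+c_{A,f}<\varepsilon+c_{A,f}=c_{A,f}/2<0$, which is impossible since the potential of a positive measure is nonnegative. Hence $\lambda_{A,f}$ is concentrated on $A\cap K_0$, which is relatively compact; taking its closure (or using that $\lambda_{A,f}\in\mathcal E^+(A)\subset\mathfrak M^+$ and that a positive measure concentrated on a relatively compact set has compact support) gives that $S(\lambda_{A,f})$ is compact.

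The main obstacle I anticipate is the interplay between "concentrated on" (a measure-theoretic notion, modulo sets of inner capacity zero) and "compact support" (a topological notion). One must verify that a measure $\lambda\in\mathfrak M^+$ which is concentrated on a relatively compact set $A\cap K_0$ actually has $S(\lambda)\subset\overline{A\cap K_0}$, which is the standard fact that the support is the smallest closed set carrying the measure; since $\overline{A\cap K_0}\subset K_0$ is compact, this is immediate. A secondary subtlety is to ensure the exceptional sets of inner capacity zero (where $U_f^{\lambda_{A,f}}$ might fail to equal $c_{A,f}$, or where $U^\omega$ might fail to be finite) are genuinely $\lambda_{A,f}$-negligible: this follows from $\lambda_{A,f}\in\mathcal E^+$ together with Lemma~\ref{l1} and the fact that $U^\omega$ is finite n.e.\ on $X$ (see \cite[p.~164]{F1}), so these sets carry no $\lambda_{A,f}$-mass. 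With these points in hand the argument closes; the sign $c_{A,f}<0$ supplied by Theorem~\ref{thap'} under $\omega^A(X)>1$ is exactly what makes the strict inequality $U^{\lambda_{A,f}}<0$ on the putative support-at-infinity contradict positivity of the potential.
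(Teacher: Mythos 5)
Your proposal is correct and follows essentially the same route as the paper: both rest on the identity $U^{\lambda_{A,f}}-U^\omega=c_{A,f}$ $\lambda_{A,f}$-a.e.\ with $c_{A,f}<0$ (from (\ref{ch-2}) and (\ref{C2'})), the nonnegativity of $U^{\lambda_{A,f}}$, and the decay (\ref{LLL}), the only cosmetic difference being that the paper runs the contradiction along a sequence $(x_j)\subset A$ tending to $\infty_X$ in the support, while you directly show $\lambda_{A,f}(A\setminus K_0)=0$ for a suitable compact $K_0$. No gaps.
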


As seen from Example~\ref{Ex1}, Theorem~\ref{thapsupport} is sharp in the sense that, in general, it would fail to hold if $\omega^A(X)>1$ were replaced by $\omega^A(X)=1$.

\begin{example}\label{Ex1}
Consider the $\alpha$-Riesz kernel $|x-y|^{\alpha-n}$, $\alpha\in(0,2)$, on $\mathbb R^n$, $n\geqslant2$, a closed set $A\subset\mathbb R^n$ that is not $\alpha$-thin at infinity and coincides with its reduced kernel, and a probability measure $\omega$ compactly supported in $A^c$. Then (\ref{LLL}) obviously holds, whereas $\omega^A(\mathbb R^n)=\omega(\mathbb R^n)=1$ \cite[Corollary~5.3]{Z-bal2}. By virtue of Theorem~\ref{thap}(i$_1$), $\lambda_{A,f}$ does exist, and moreover $\lambda_{A,f}=\omega^A$; hence, $S(\lambda_{A,f})=A$ (see \cite[Corollary~5.4]{Z-bal2}).
\end{example}

\section{Proofs}\label{sec-prr}

Assume that $X$, $\kappa$, $A$, and $\omega$ are as required in Theorem~\ref{th1} (cf.\ the beginning of Section~\ref{sec-appl}).
It is crucial to note that then, according to this theorem,
\begin{equation}\label{mm'}\omega^A\in\mathcal E^+(A).\end{equation}

Along with $f=-U^\omega$, consider the external field
\[\tilde{f}:=-U^{\omega^A}.\]
Then $f=\tilde{f}$ n.e.\ on $A$ (see Theorem~\ref{th1}(i)); therefore, by virtue of (\ref{upint}),
\begin{equation}\label{if}
I_f(\mu)=I_{\tilde{f}}(\mu)\quad\text{for all $\mu\in\mathcal E^+(A)$},
\end{equation}
and consequently
\begin{equation}\label{wf}
w_f(A)=w_{\tilde{f}}(A):=\inf_{\mu\in\breve{\mathcal E}^+(A)}\,I_{\tilde{f}}(\mu).
\end{equation}
It follows that the (unique) solution $\lambda_{A,f}$ to problem (\ref{G}) exists if and only if there exists $\lambda_{A,\tilde{f}}\in\breve{\mathcal E}^+(A)$ with $I_{\tilde{f}}(\lambda_{A,\tilde{f}})=w_{\tilde{f}}(A)$, and in the affirmative case,
\begin{equation}\label{mm}
 \lambda_{A,f}=\lambda_{A,\tilde{f}}.
\end{equation}
That is, the minimum $\tilde{f}$-weighted problem (\ref{wf}) is {\it dual\/} to the original problem (\ref{G}), the external field $\tilde{f}=-U^{\omega^A}$ being created by $\omega^A\in\mathcal E^+(A)$.

Also note that obviously
\begin{equation}\label{wpp'}
U_f^{\lambda_{A,f}}=U_{\tilde{f}}^{\lambda_{A,\tilde{f}}}\quad\text{n.e.\ on $A$},
\end{equation}
which in view of (\ref{ch-3}) gives
\begin{equation}c_{A,f}=c_{A,\tilde{f}}.\label{wpp}\end{equation}

$\P$ An advantage of problem (\ref{wf}) in comparison with problem (\ref{G}) is that the $\tilde{f}$-weighted energy $I_{\tilde{f}}(\cdot)$ is strongly continuous on the (strongly closed) set $\mathcal E^+(A)$, for
\begin{equation*}
I_{\tilde{f}}(\mu)=\|\mu-\omega^A\|^2-\|\omega^A\|^2\quad\text{for all $\mu\in\mathcal E^+(A)$}.
\end{equation*}
Thus, if a net $(\mu_s)\subset\mathcal E^+(A)$ converges strongly to $\mu_0$, then
\begin{equation*}
\lim_s\,I_{\tilde{f}}(\mu_s)=I_{\tilde{f}}(\mu_0),
\end{equation*}
hence, cf.\ (\ref{if}),
\begin{equation}\label{dualg'}
\lim_s\,I_f(\mu_s)=I_f(\mu_0).
\end{equation}

\subsection{Preliminary results} A net $(\mu_s)$ is said to be {\it minimizing} in problem (\ref{G}) if $(\mu_s)\subset\breve{\mathcal E}^+(A)$ and
\begin{equation}\label{e-min}\lim_s\,I_f(\mu_s)=w_f(A);\end{equation}
let $\mathbb M_f(A)$ stand for the (nonempty) set of all those $(\mu_s)$.

\begin{lemma}\label{au-1}
There is the unique $\xi_{A,f}\in\mathcal E^+(A)$ such that for every $(\mu_s)\in\mathbb M_f(A)$,
\begin{equation}\label{e-au-1}
 \mu_s\to\xi_{A,f}\quad\text{strongly and vaguely in $\mathcal E^+$}.
\end{equation}
This $\xi_{A,f}$ is said to be extremal in problem {\rm(\ref{G})}.
\end{lemma}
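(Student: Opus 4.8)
The plan is to exploit the dual reformulation recorded just above the lemma, namely that for every $\mu\in\mathcal E^+(A)$ one has $I_f(\mu)=I_{\tilde f}(\mu)=\|\mu-\omega^A\|^2-\|\omega^A\|^2$, so that minimizing $I_f$ over the \emph{convex} set $\breve{\mathcal E}^+(A)$ amounts to minimizing $\mu\mapsto\|\mu-\omega^A\|$ over $\breve{\mathcal E}^+(A)$, i.e.\ to a projection problem in the pre-Hilbert space $\mathcal E$. Writing $d^2:=w_f(A)+\|\omega^A\|^2$, which is finite by (\ref{ww}) and (\ref{mm'}), we get $\inf_{\mu\in\breve{\mathcal E}^+(A)}\|\mu-\omega^A\|^2=d^2$, and $(\mu_s)\subset\breve{\mathcal E}^+(A)$ is minimizing precisely when $\|\mu_s-\omega^A\|\to d$. (The set $\mathbb M_f(A)$ is nonempty since $w_f(A)$ is finite.)

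The first step is to show that any $(\mu_s)\in\mathbb M_f(A)$ is a strong Cauchy net. For two indices $s,t$, convexity gives $(\mu_s+\mu_t)/2\in\breve{\mathcal E}^+(A)$, whence $\|\mu_s+\mu_t-2\omega^A\|^2\geqslant4d^2$; combining this with the parallelogram identity applied to $\mu_s-\omega^A$ and $\mu_t-\omega^A$ in $\mathcal E$ yields
\[\|\mu_s-\mu_t\|^2\leqslant2\|\mu_s-\omega^A\|^2+2\|\mu_t-\omega^A\|^2-4d^2,\]
and the right-hand side tends to $0$ along the product directed set because $(\mu_s)$ is minimizing. As the cone $\mathcal E^+(A)$ is strongly closed in the strongly complete cone $\mathcal E^+$ (Section~\ref{sec-intr1}) and $\kappa$ is perfect, $(\mu_s)$ converges strongly, hence vaguely, to some $\xi_{A,f}\in\mathcal E^+(A)$; note $\|\xi_{A,f}-\omega^A\|=d$ by strong continuity of the norm.

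It remains to see that this limit does not depend on the chosen minimizing net; its uniqueness then follows since the strong topology on $\mathcal E$ is Hausdorff. Given $(\mu_s),(\mu'_t)\in\mathbb M_f(A)$ with strong limits $\xi$ and $\xi'$, the displayed estimate holds with $\mu_t$ replaced by $\mu'_t$; passing to the limit in $s$ for fixed $t$ (strong continuity of the norm, and $\|\mu_s-\omega^A\|\to d$) gives $\|\xi-\mu'_t\|^2\leqslant2\|\mu'_t-\omega^A\|^2-2d^2$ for each $t$, and then passing to the limit in $t$ gives $\|\xi-\xi'\|^2\leqslant0$, so $\xi=\xi'$. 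Running the same two-step limit with $(\mu_s)$ fixed and $(\mu'_t)$ an arbitrary element of $\mathbb M_f(A)$ shows that every minimizing net converges to this common measure $\xi_{A,f}$. The only point needing care is the bookkeeping of the iterated limit over nets (limit in $s$ first, then in $t$), but since all the estimates are uniform in the remaining index, no real difficulty arises; this is the mildest possible obstacle and the proof is otherwise routine.
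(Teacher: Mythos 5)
Your proof is correct. The only substantive difference from the paper is one of packaging: the paper's proof of Lemma~\ref{au-1} observes that, by (\ref{if}) and (\ref{wf}), every $(\mu_s)\in\mathbb M_f(A)$ is also minimizing for the dual problem with external field $\tilde f=-U^{\omega^A}$ created by $\omega^A\in\mathcal E^+(A)$, and then simply cites \cite[Lemma~4.1]{Z-Oh} (applicable precisely because of (\ref{mm'})) to conclude. You instead unpack that cited lemma and prove it from scratch: the identity $I_f(\mu)=\|\mu-\omega^A\|^2-\|\omega^A\|^2$ on $\mathcal E^+(A)$ turns the Gauss problem into a nearest-point problem for the convex set $\breve{\mathcal E}^+(A)$, and the convexity/parallelogram/completeness argument (the same pattern the paper itself uses at (\ref{Ca}) in the proof of Theorem~\ref{th1}) gives strong Cauchyness of any minimizing net, convergence in the strongly complete cone $\mathcal E^+(A)$, vague convergence via perfectness, and independence of the limit from the chosen net by the mixed-net estimate. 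All steps check out, including the two finiteness points ($w_f(A)>-\infty$ from (\ref{ww}) and $\|\omega^A\|<\infty$ from (\ref{mm'})) and the iterated limit in the uniqueness step. Your version is self-contained and makes the Hilbert-space mechanism visible; the paper's is shorter but delegates the real work to the external reference.
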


\begin{proof}
It is clear from (\ref{if}), (\ref{wf}), and (\ref{e-min}) that
$(\mu_s)\in\mathbb M_{\tilde{f}}(A)$, where $\mathbb M_{\tilde{f}}(A)$ denotes the set of all nets that are minimizing in the (dual) problem (\ref{wf}). Applying \cite[Lemma~4.1]{Z-Oh}, which is possible because of (\ref{mm'}), therefore gives (\ref{e-au-1}) with
$\xi_{A,f}:=\xi_{A,\tilde{f}}$, where $\xi_{A,\tilde{f}}$ is the (unique) measure extremal in problem (\ref{wf}) (see \cite[Lemma~4.1]{Z-Oh}). The remaining claim $\xi_{A,f}\in\mathcal E^+(A)$ is obvious, $\mathcal E^+(A)$ being strongly closed.
\end{proof}

\begin{lemma}\label{au-2}For the extremal measure $\xi_{A,f}$, we have
\begin{equation}\label{e-au-2}
I_f(\xi_{A,f})=w_f(A).
\end{equation}
Hence, problem {\rm(\ref{G})} is solvable if and only if $\xi_{A,f}\in\breve{\mathcal E}^+(A)$, or equivalently
\begin{equation}\label{e-au-3}
\xi_{A,f}(X)=1,
\end{equation}
and in the affirmative case,
\begin{equation}\label{e-au-4}
\xi_{A,f}=\lambda_{A,f}.
\end{equation}
\end{lemma}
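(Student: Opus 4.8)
The plan is to leverage the dual problem (\ref{wf}) and the strong continuity of $I_{\tilde f}$ on $\mathcal E^+(A)$, which was just highlighted, together with Lemma~\ref{au-1}. First I would prove (\ref{e-au-2}). By Lemma~\ref{au-1}, pick any minimizing net $(\mu_s)\in\mathbb M_f(A)$; then $\mu_s\to\xi_{A,f}$ strongly in $\mathcal E^+(A)$, so by (\ref{dualg'}) we get $I_f(\xi_{A,f})=\lim_s I_f(\mu_s)=w_f(A)$. This is the crux of the whole lemma: the strong convergence of minimizing nets, which fails for the original field $f$ but holds for $\tilde f=-U^{\omega^A}$ precisely because $\omega^A\in\mathcal E^+(A)$ (that is (\ref{mm'})), is what upgrades a merely vague lower-semicontinuity argument into an equality.

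Next I would establish the equivalence ``problem (\ref{G}) solvable $\iff\xi_{A,f}\in\breve{\mathcal E}^+(A)$''. For the nontrivial direction, suppose $\lambda_{A,f}$ exists. Since $\lambda_{A,f}\in\breve{\mathcal E}^+(A)$ and $I_f(\lambda_{A,f})=w_f(A)$, the constant net $(\lambda_{A,f})$ is a minimizing net, hence lies in $\mathbb M_f(A)$, so Lemma~\ref{au-1} forces $\lambda_{A,f}=\xi_{A,f}$; in particular $\xi_{A,f}\in\breve{\mathcal E}^+(A)$ and (\ref{e-au-4}) holds. Conversely, if $\xi_{A,f}\in\breve{\mathcal E}^+(A)$, then by (\ref{e-au-2}) the measure $\xi_{A,f}$ is an admissible competitor attaining the infimum $w_f(A)$, hence is a solution $\lambda_{A,f}$; uniqueness of the minimizer (noted after (\ref{ww})) then again gives (\ref{e-au-4}). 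The equivalence with the mass condition (\ref{e-au-3}) is immediate since $\xi_{A,f}\in\mathcal E^+(A)$, so membership in $\breve{\mathcal E}^+(A)$ is exactly the normalization $\xi_{A,f}(X)=1$.

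The main (and really only) obstacle is the first step, and it has already been set up: one must know that minimizing nets for $I_f$ converge strongly, not merely vaguely. This is where the dual reformulation pays off — transferring to $I_{\tilde f}(\mu)=\|\mu-\omega^A\|^2-\|\omega^A\|^2$ turns problem (\ref{G}) into minimizing $\|\mu-\omega^A\|$ over the convex set $\breve{\mathcal E}^+(A)$, so a standard parallelogram-identity argument in the pre-Hilbert space $\mathcal E$ yields that any minimizing net is strong Cauchy, and strong completeness of $\mathcal E^+(A)$ supplies the limit. I would simply cite \cite[Lemma~4.1]{Z-Oh} for this, as the excerpt does, noting that its hypothesis is met by (\ref{mm'}). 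Everything else is bookkeeping with (\ref{if}), (\ref{wf}), (\ref{dualg'}), and the uniqueness of $\lambda_{A,f}$.
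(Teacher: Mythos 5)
Your proof is correct and follows essentially the same route as the paper: derive (\ref{e-au-2}) from the strong convergence in Lemma~\ref{au-1} combined with the strong continuity (\ref{dualg'}) coming from the dual field $\tilde f=-U^{\omega^A}$, and obtain (\ref{e-au-4}) by observing that the constant net $(\lambda_{A,f})$ is minimizing, so it must converge to $\xi_{A,f}$ as well. The remaining equivalences are handled exactly as in the paper, so there is nothing to add.
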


\begin{proof} For any $(\mu_s)\in\mathbb M_f(A)$, $\mu_s\to\xi_{A,f}$ strongly (Lemma~\ref{au-1}), which together with  (\ref{dualg'}) and (\ref{e-min}) gives (\ref{e-au-2}). Therefore, the (unique) extremal measure $\xi_{A,f}$ serves as the (unique) minimizer $\lambda_{A,f}$, provided that (\ref{e-au-3}) holds.

To complete the proof, assume $\lambda_{A,f}$ exists. Since the trivial net $(\lambda_{A,f})$ obviously belongs to $\mathbb M_f(A)$, it must converge strongly to both $\lambda_{A,f}$ and $\xi_{A,f}$ (Lemma~\ref{au-1}). This implies (\ref{e-au-4}), the strong topology on $\mathcal E$ being Hausdorff.\end{proof}

\begin{remark}
In view of the vague convergence of $(\mu_s)\in\mathbb M_f(A)$ to $\xi_{A,f}$,
\begin{equation}\label{xim}
 \xi_{A,f}(X)\leqslant\liminf_{s}\,\mu_s(X)=1,
\end{equation}
the mapping $\nu\mapsto\nu(X)$ being vaguely l.s.c.\ on $\mathfrak M^+$ \cite[Section~IV.1, Proposition~4]{B2}. Problem (\ref{G}) is thus solvable if and only if equality prevails in (\ref{xim}).
\end{remark}

\subsection{Proof of Theorem~\ref{thap}} Assume first that $c_*(A)<\infty$. Then \cite[Theorem~1.2]{Z-Oh} applied to the external field $\tilde{f}=-U^{\omega^A}$ with $\omega^A\in\mathcal E^+(A)$, cf.\ (\ref{mm'}), shows that the dual problem (\ref{wf}) is (uniquely) solvable. Hence, so is the original problem (\ref{G}), and moreover their solutions coincide, cf.\ (\ref{mm}).

The proof of (i$_1$) is thus reduced to the case $\omega^A(X)=1$. Together with (\ref{mm'}), this gives $\omega^A\in\breve{\mathcal E}^+(A)$; therefore, on account of (\ref{ww}),
\[w_f(A)\leqslant I_f(\omega^A)=\widehat{w}_f(A)\leqslant w_f(A).\]
Thus problem (\ref{G}) is indeed solvable, and moreover (\ref{eqq}) holds. Also note that then
\begin{equation}\label{c0ag}
 c_{A,f}=\int U_f^{\lambda_{A,f}}\,d\lambda_{A,f}=\int\bigl(U^{\omega^A}-U^\omega\bigr)\,d\omega^A=0,
\end{equation}
for $U^{\omega^A}-U^\omega=0$ n.e.\ on $A$ (Theorem~\ref{th1}(i)), hence $\omega^A$-a.e.\ on $X$ (Lemma~\ref{l1}).

To verify (ii$_1$), assume (\ref{nec}) is fulfilled. As $c_*(A)=\infty$, there exists a sequence $(\tau_j)\subset\breve{\mathcal E}^+(A)$ converging strongly to zero. Define
\[\mu_j:=\omega^A+q\tau_j,\quad\text{where \ }q:=1-\omega^A(X)\in(0,1);\]
then obviously $(\mu_j)\subset\breve{\mathcal E}^+(A)$, and consequently
\begin{equation}\label{doc1}
w_f(A)\leqslant\liminf_{j\to\infty}\,I_f(\mu_j).
\end{equation}
Observing from (\ref{mm'}) and (\ref{if}) that
\[I_f(\mu_j)=I_{\tilde{f}}(\mu_j)=\|\omega^A+q\tau_j\|^2-2\bigl\langle\omega^A,\omega^A+q\tau_j\bigr\rangle=
-\|\omega^A\|^2+q^2\|\tau_j\|^2,\]
and also that
\[I_f(\omega^A)=I_{\tilde{f}}(\omega^A)=\|\omega^A\|^2-2\bigl\langle\omega^A,\omega^A\bigr\rangle=-\|\omega^A\|^2,\]
we get
\[\lim_{j\to\infty}\,I_f(\mu_j)=-\|\omega^A\|^2=I_f(\omega^A)=\widehat{w}_f(A)\leqslant w_f(A),\]
which together with (\ref{doc1}) shows that $(\mu_j)$ belongs to $\mathbb M_f(A)$, and hence converges strongly to the extremal measure $\xi_{A,f}$ (Lemma~\ref{au-1}). Since obviously
\[\mu_j\to\omega^A\quad\text{strongly},\] we have $\xi_{A,f}=\omega^A$, therefore $\xi_{A,f}(X)<1$ (cf.\ (\ref{nec})), whence (ii$_1$) (Lemma~\ref{au-2}).

To prove (iii$_1$), we first remark that, according to \cite[Lemma~4]{Z5a},
\begin{equation}\label{Kcont}
w_f(K)\downarrow w_f(A)\quad\text{as $K\uparrow A$}.
\end{equation}
By virtue of (i$_1$), for every $K\in\mathfrak C_A$ there exists the solution $\lambda_{K,f}$ to problem (\ref{G}) with $A:=K$, $c(K)$ being finite by the energy principle. As seen from (\ref{Kcont}), those $\lambda_{K,f}$ form a minimizing net, i.e.\
\begin{equation}\label{minnett}
(\lambda_{K,f})_{K\in\mathfrak C_A}\in\mathbb M_f(A),
\end{equation}
and hence, in view of Lemma~\ref{au-1},
\begin{equation}\label{mnet1}
\lambda_{K,f}\to\xi_{A,f}\quad\text{strongly and vaguely in $\mathcal E^+(A)$ as $K\uparrow A$}.
\end{equation}
Furthermore, on account of (\ref{ch-1}) and (\ref{ch-3}),
\begin{equation}\label{mnet2}
U_f^{\lambda_{K,f}}\geqslant c_{K,f}:=\int U_f^{\lambda_{K,f}}\,d\lambda_{K,f}\quad\text{n.e.\ on $K$}.
\end{equation}

We aim to show that
\begin{equation}\label{mnet4}
\lim_{K\uparrow A}\,\int U_f^{\lambda_{K,f}}\,d\lambda_{K,f}=\int U_f^{\xi_{A,f}}\,d\xi_{A,f}=:C_\xi,
\end{equation}
and moreover
\begin{equation}\label{mnet3}
U_f^{\xi_{A,f}}\geqslant C_\xi\quad\text{n.e.\ on $A$}.
\end{equation}
First, in view of the strong convergence of $(\lambda_{K,f})_{K\in\mathfrak C_A}$ to $\xi_{A,f}$, cf.\ (\ref{mnet1}),
\begin{equation}\label{mnet4'}\lim_{K\uparrow A}\,\|\lambda_{K,f}\|^2=\|\xi_{A,f}\|^2,\end{equation}
and also\footnote{In the first and the third equalities in (\ref{mnet5}), we use the fact that $U^\omega=U^{\omega^A}$ holds true n.e.\ on $A$, hence $(\xi_{A,f}+\lambda_{K,f})$-a.e.\ on $X$, whereas the second equality is derived from (\ref{mnet1}) by applying the Cauchy--Schwarz inequality to $\omega^A$ and $\lambda_{K,f}-\xi_{A,f}$, elements of the pre-Hil\-bert space $\mathcal E$.}
\begin{equation}\label{mnet5}\lim_{K\uparrow A}\,\int U^\omega\,d\lambda_{K,f}=\lim_{K\uparrow A}\,\bigl\langle\omega^A,\lambda_{K,f}\bigr\rangle=\bigl\langle\omega^A,\xi_{A,f}\bigr\rangle=\int U^\omega\,d\xi_{A,f},\end{equation}
which combined with (\ref{mnet4'}) results in (\ref{mnet4}). Taking now limits as $K\uparrow A$ in (\ref{mnet2}), on account of (\ref{mnet4}) and Lemmas~\ref{str} and \ref{l2}, we conclude that (\ref{mnet3}) does indeed hold n.e.\ on $K_0$, where compact $K_0\subset A$ is arbitrarily chosen, and hence n.e.\ on $A$.

Utilizing Frostman's maximum principle adopted in (iii$_1$), we shall now verify that (\ref{geq}) ensures the existence of $\lambda_{A,f}$. Suppose first that $C_\xi\geqslant0$, the (finite) constant $C_\xi$ being introduced in (\ref{mnet4}). In view of (\ref{mnet3}), then
\begin{equation*}U^{\xi_{A,f}}\geqslant U^\omega=U^{\omega^A}\quad\text{n.e.\ on $A$},\end{equation*}
the equality being derived from (\ref{1}) with the aid of Lemma~\ref{str}. Since $\omega^A\in\mathcal E^+(A)$ (Theorem~\ref{th1}), $U^{\xi_{A,f}}\geqslant U^{\omega^A}$ holds true $\omega^A$-a.e.\ (Lemma~\ref{l1}), hence on all of $X$ (the domination principle). Therefore, by virtue of Deny's principle of positivity of mass in the form stated in \cite[Theorem~2.1]{Z-arx-22},
\[\xi_{A,f}(X)\geqslant\omega^A(X)>1,\]
the latter inequality being given by (\ref{geq}). Since this contradicts (\ref{xim}),
\begin{equation}\label{xi0}
 C_\xi<0.
\end{equation}

Noting that (\ref{mnet3}) holds $\xi_{A,f}$-a.e., we obtain by integration
\begin{equation*}
C_\xi=\int U_f^{\xi_{A,f}}\,d\xi_{A,f}\geqslant C_\xi\cdot\xi_{A,f}(X),
\end{equation*}
the equality being valid because of (\ref{mnet4}). Therefore, on account of (\ref{xi0}),
\[\xi_{A,f}(X)\geqslant1,\]
which combined with (\ref{xim}) yields (\ref{e-au-3}), hence (\ref{e-au-4}) (Lemma~\ref{au-2}).
This establishes (iii$_1$), thereby completing the whole proof of the theorem.

\subsection{Proof of Theorem~\ref{th-contG1}} Under the stated assumptions, $\lambda_{A,f}$ and $\lambda_{K,f}$, where $K\in\mathfrak C_A$, do exist (Theorem~\ref{thap}), and moreover
$(\lambda_{K,f})_{K\in\mathfrak C_A}\in\mathbb M_f(A)$ and $\lambda_{A,f}=\xi_{A,f}$ (see (\ref{minnett}) and (\ref{e-au-4}), respectively). Hence, by virtue of (\ref{e-au-1}), (\ref{e-contG1}) does indeed hold, which leads, in turn, to (\ref{e-contG2})
in a way similar to that in (\ref{mnet4'}) and (\ref{mnet5}).

\subsection{Proof of Theorem~\ref{th-contG2}} We first observe that $\lambda_{A_j,f}$, $j\in\mathbb N$, as well as $\lambda_{A,f}$ do exist. Indeed, under the requirements of (i$_2$), all the $A_j$, $j\in\mathbb N$, as well as $A$ are of finite inner capacity, and the claim follows at once from Theorem~\ref{thap}(i$_1$) by noting that a countable intersection of quasiclosed sets is likewise quasiclosed \cite[Lemma~2.3]{F71}. If now the hypotheses of (ii$_2$) hold, then, by virtue of (\ref{emm'}) and (\ref{rest}),
\begin{equation}\label{e-contG1-pr0}\omega^{A_j}(X)\geqslant\bigl(\omega^{A_j}\bigr)^A(X)=\omega^A(X)>1\quad\text{for all $j\in\mathbb N$},\end{equation}
and the existence of $\lambda_{A_j,f}$, $j\in\mathbb N$, and $\lambda_{A,f}$ is ensured by Theorem~\ref{thap}(iii$_1$).

In much the same way as in the proof of Proposition~\ref{bal-mon}, one can see that there exists the unique measure $\mu_0\in\mathcal E^+(A)$ such that
\begin{equation}\label{e-contG1-pr}
 \lambda_{A_j,f}\to\mu_0\quad\text{strongly and vaguely in $\mathcal E^+$ as $j\to\infty$},
\end{equation}
which implies, in a manner similar to that in (\ref{mnet4'}) and (\ref{mnet5}), that
\begin{equation}\label{mnet4-pr}
\lim_{j\to\infty}\,c_{A_j,f}=\lim_{j\to\infty}\,\int U_f^{\lambda_{A_j,f}}\,d\lambda_{A_j,f}=\int U_f^{\mu_0}\,d\mu_0=:C_{\mu_0},
\end{equation}
$c_{A_j,f}$ being introduced by (\ref{ch-3}) with $A:=A_j$. Furthermore, according to (\ref{ch-1}),
\begin{equation}\label{mnet4-pr0}
U_f^{\lambda_{A_j,f}}\geqslant c_{A_j,f}=\int U_f^{\lambda_{A_j,f}}\,d\lambda_{A_j,f}\quad\text{n.e.\ on $A_j$},
\end{equation}
hence n.e.\ on the (smaller) set $A$. Taking now limits in (\ref{mnet4-pr0}), on account of (\ref{e-contG1-pr}), (\ref{mnet4-pr}), and Lemmas~\ref{str} and \ref{l2}, we therefore get
\begin{equation}\label{mnet4-pr1}
U_f^{\mu_0}\geqslant\int U_f^{\mu_0}\,d\mu_0=C_{\mu_0}\quad\text{n.e.\ on $A$}.
\end{equation}
We claim that, actually,
\begin{equation}\label{zero'}\mu_0=\lambda_{A,f}\quad\text{and}\quad C_{\mu_0}=c_{A,f},\end{equation}
which substituted into (\ref{e-contG1-pr}) and (\ref{mnet4-pr}) would have proved both (i$_2$) and (ii$_2$).

Assume first that the hypotheses of (ii$_2$) are fulfilled. Then, by virtue of (\ref{e-contG1-pr0}), $\omega^{A_j}(X)>1$, hence $c_{A_j,f}<0$ (cf.\ the proof of Theorem~\ref{thap}(iii$_1$)), which in view of (\ref{mnet4-pr}) gives
$C_{\mu_0}\leqslant0$. If $C_{\mu_0}=0$, then, by (\ref{mnet4-pr1}), $U^{\mu_0}\geqslant U^{\omega^A}$ n.e.\ on $A$, hence $\omega^A$-a.e.\ (Lemma~\ref{l1}), and therefore everywhere on $X$ (the domination principle). Applying Deny's principle of positivity of mass in the form stated in \cite[Theorem~2.1]{Z-arx-22}, we get
\[\mu_0(X)\geqslant\omega^A(X)>1,\]
which is however impossible, $\mu_0$ being the vague limit of a sequence of probability measures, see (\ref{e-contG1-pr}). Thus, necessarily,
\begin{equation}\label{zero}
C_{\mu_0}<0.
\end{equation}
Integrating (\ref{mnet4-pr1}) with respect to $\mu_0$ we get
\[C_{\mu_0}=\int U_f^{\mu_0}\,d\mu_0\geqslant C_{\mu_0}\cdot\mu_0(X),\]
the equality being valid because of (\ref{mnet4-pr}). Therefore, in view of (\ref{zero}), $\mu_0(X)\geqslant1$, which combined with $\mu_0(X)\leqslant1$ and $\mu_0\in\mathcal E^+(A)$ (see above) shows that, actually,
\[\mu_0\in\breve{\mathcal E}^+(A).\] On account of the characteristic property of $\lambda_{A,f}$ given in Section~\ref{ssec-const}, this together with (\ref{mnet4-pr1}) yields (\ref{zero'}), thereby proving (ii$_2$).

To verify (i$_2$), we need the following assertion, having an independent interest. It can, in fact, be derived from \cite{Z-Oh} (see the proof of Theorem~1.2 therein).

\begin{theorem}\label{breve} If $c_*(A)<\infty$, then the class $\breve{\mathcal E}^+(A)$ along with $\mathcal E^+(A)$ is strongly closed, and hence it is strongly complete.\footnote{If $c_*(A)=\infty$, then the class $\breve{\mathcal E}^+(A)$ is clearly no longer strongly closed, for there is a sequence $(\mu_j)\subset\breve{\mathcal E}^+(A)$ converging strongly and vaguely to $\mu_0=0$. It is worth emphasizing that Theorem~\ref{breve} actually holds true for an arbitrary perfect kernel on an arbitrary locally compact space. See, in particular, \cite[Theorem~3.6]{Z-Rarx}, pertaining to the Riesz kernels $|x-y|^{\alpha-n}$, $\alpha\in(0,n)$, on $\mathbb R^n$, $n\geqslant2$.}
\end{theorem}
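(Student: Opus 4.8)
The plan is to reduce the assertion to the conservation of total mass under strong limits, and to obtain that by regarding such a limit as the charge generating an external field for which the given sequence becomes a minimizing net. Since, by the standing hypotheses, $\mathcal E^+(A)$ is strongly closed, hence strongly complete, and $\breve{\mathcal E}^+(A)\subset\mathcal E^+(A)$, it suffices to prove that $\breve{\mathcal E}^+(A)$ is strongly \emph{closed}; it will then be a strongly closed subset of a strongly complete cone, hence itself strongly complete. So let $(\mu_j)\subset\breve{\mathcal E}^+(A)$ be strong Cauchy. By the completeness of $\mathcal E^+(A)$ and the perfectness of $\kappa$ there is a unique $\mu_0\in\mathcal E^+(A)$ with $\mu_j\to\mu_0$ strongly, hence vaguely, and the vague lower semicontinuity of $\nu\mapsto\nu(X)$ on $\mathfrak M^+$ gives $\mu_0(X)\leqslant\liminf_j\mu_j(X)=1$. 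Moreover $\mu_0\ne0$: since $c_*(A)<\infty$ and $\mathcal E^+(A)$ is strongly closed, the inner equilibrium measure $\gamma_A$, normalized by $\gamma_A(X)=c_*(A)$, belongs to $\mathcal E^+(A)$ (cf.\ \cite[Theorem~7.2]{Z-arx-22}) and satisfies $U^{\gamma_A}\geqslant1$ n.e.\ on $A$ and $\|\gamma_A\|^2=c_*(A)$, so by Lemma~\ref{l1} and the Cauchy--Schwarz inequality $1=\mu_j(X)\leqslant\int U^{\gamma_A}\,d\mu_j=\langle\gamma_A,\mu_j\rangle\leqslant\sqrt{c_*(A)}\,\|\mu_j\|$, whence $\|\mu_0\|=\lim_j\|\mu_j\|\geqslant c_*(A)^{-1/2}>0$. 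It thus remains to show that $\mu_0(X)=1$.

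Here is the main step. As $\mu_0\in\mathcal E^+(A)\subset\mathcal E^+$ is of finite energy and nonzero, we may take it as the charge generating the external field $f:=-U^{\mu_0}$, i.e.\ consider the minimum energy problem~(\ref{G}) on $A$ relative to $\omega:=\mu_0$; since $\mu_0$ has finite energy, $\mathcal E^+_f(A)=\mathcal E^+(A)$. For every $\mu\in\mathcal E^+(A)$ one has $I_f(\mu)=\|\mu\|^2-2\langle\mu_0,\mu\rangle=\|\mu-\mu_0\|^2-\|\mu_0\|^2$; hence $w_f(A)\geqslant-\|\mu_0\|^2$, while $I_f(\mu_j)=\|\mu_j-\mu_0\|^2-\|\mu_0\|^2\to-\|\mu_0\|^2$ shows, as $\mu_j\in\breve{\mathcal E}^+(A)$, that in fact $w_f(A)=-\|\mu_0\|^2$, a finite number. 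Because $c_*(A)<\infty$, Theorem~\ref{thap}(i$_1$) provides the (unique) solution $\lambda_{A,f}\in\breve{\mathcal E}^+(A)$ to problem~(\ref{G}) on $A$. Then $\|\lambda_{A,f}-\mu_0\|^2-\|\mu_0\|^2=I_f(\lambda_{A,f})=w_f(A)=-\|\mu_0\|^2$, so $\|\lambda_{A,f}-\mu_0\|^2=0$ and therefore $\mu_0=\lambda_{A,f}$ by the energy principle. Consequently $\mu_0\in\breve{\mathcal E}^+(A)$, i.e.\ $\mu_0(X)=1$, which completes the proof that $\breve{\mathcal E}^+(A)$ is strongly closed, hence strongly complete.

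The one point calling for ingenuity is the choice of external field: an arbitrary strong Cauchy sequence in $\breve{\mathcal E}^+(A)$ has no reason to minimize anything, but it does minimize the Gauss functional for the field created by its own (a priori only potential) limit, after which one merely invokes the solvability already established when $c_*(A)<\infty$. The hypothesis $c_*(A)<\infty$ enters twice — in forcing $\mu_0\ne0$ and in applying Theorem~\ref{thap}(i$_1$) — and it is indispensable, since for $c_*(A)=\infty$ the measure $0$ is itself a strong limit of elements of $\breve{\mathcal E}^+(A)$. (When Frostman's maximum principle is additionally assumed, one can bypass Theorem~\ref{thap} altogether: then $U^{\gamma_A}=1$ n.e.\ on $A$, so by Lemma~\ref{l1} $\nu(X)=\langle\gamma_A,\nu\rangle$ for every signed $\nu\in\mathcal E$ concentrated on $A$, whence $|\nu(X)|\leqslant\sqrt{c_*(A)}\,\|\nu\|$ and the total-mass functional is already strongly Lipschitz on $\mathcal E^+(A)$, so that $\breve{\mathcal E}^+(A)$ is strongly closed as the preimage of $\{1\}$.)
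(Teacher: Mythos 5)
Your argument is correct under the paper's standing hypotheses, and it is necessarily a different route from the paper's, since the paper offers no self-contained proof of Theorem~\ref{breve} but simply refers the reader to the proof of Theorem~1.2 in \cite{Z-Oh}. Your reduction is sound: a strong Cauchy sequence in $\breve{\mathcal E}^+(A)$ converges strongly and vaguely to some $\mu_0\in\mathcal E^+(A)$ (perfectness plus the assumed strong closedness of $\mathcal E^+(A)$); the bound $1=\mu_j(X)\leqslant\langle\gamma_A,\mu_j\rangle\leqslant\sqrt{c_*(A)}\,\|\mu_j\|$ correctly rules out $\mu_0=0$; the identity $I_f(\mu)=\|\mu-\mu_0\|^2-\|\mu_0\|^2$ with $f:=-U^{\mu_0}$ turns $(\mu_j)$ into a minimizing sequence with $w_f(A)=-\|\mu_0\|^2$; and Theorem~\ref{thap}(i$_1$), applicable because $c_*(A)<\infty$, then forces $\mu_0=\lambda_{A,f}\in\breve{\mathcal E}^+(A)$, i.e.\ $\mu_0(X)=1$. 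Two remarks on what this buys and what it costs. First, the logical order is inverted relative to the cited literature: in \cite{Z-Oh} the strong closedness of $\breve{\mathcal E}^+(A)$ for $c_*(A)<\infty$ is essentially the engine behind the solvability statement you invoke, so your proof, while not circular within this paper (Theorem~\ref{thap}(i$_1$) is established earlier, by citation), does not supply an independent derivation. Second, and relatedly, it cannot recover the footnote's stronger claim that the theorem holds for an arbitrary perfect kernel on an arbitrary locally compact space, since Theorem~\ref{thap} carries the domination and Ugaheri hypotheses; your closing parenthetical (the Lipschitz bound $|\nu(X)|\leqslant\sqrt{c_*(A)}\,\|\nu\|$ obtained from $U^{\gamma_A}=1$ n.e.\ on $A$) is much closer in spirit to a self-contained proof, but as you note it requires Frostman's maximum principle, which Theorem~\ref{breve} does not assume.
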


Since obviously $\lambda_{A_j,f}\in\breve{\mathcal E}^+(A_k)$ for all $j\geqslant k$, whereas $\breve{\mathcal E}^+(A_k)$ is strongly closed (Theorem~\ref{breve} with $A:=A_k$), in view of (\ref{e-contG1-pr}) we get
$\mu_0(X)=1$. As $\mu_0\in\mathcal E^+(A)$ (see above), we thus actually have $\mu_0\in\breve{\mathcal E}^+(A)$. Combining this with (\ref{mnet4-pr1}) yields (\ref{zero'}), again by exploiting the characteristic property of $\lambda_{A,f}$, given in Section~\ref{ssec-const}. This establishes (i$_2$), thereby completing the proof of the whole theorem.

\subsection{Proof of Theorem~\ref{thap''}} It is crucial to note that
\begin{equation}\label{be}
 (\omega^A)^A=\omega^A.
\end{equation}
Indeed, as seen from Theorem~\ref{th1}(iv) (cf.\ also \cite[Theorem~4.3]{Z-arx1}), the inner balayage $\mu^A$ of any $\mu\in\mathcal E^+$ is actually the orthogonal projection of $\mu$ onto the convex, strongly complete cone $\mathcal E^+(A)$. As $\omega^A\in\mathcal E^+(A)$, see (\ref{mm'}), while the orthogonal projection is unique by the energy principle, cf.\ \cite[Theorem~1.12.3]{E2}, this yields (\ref{be}).

Under the stated hypotheses, Frostman's principle is fulfilled, and $\lambda_{A,f}$ exists. By virtue of Theorem~\ref{thap}, then necessarily either $c_*(A)<\infty$ or $\omega^A(X)\geqslant1$, which combined with (\ref{req''}), (\ref{mm'}), and (\ref{be}) shows that \cite[Theorem~1.6]{Z-Oh} with $\zeta:=\omega^A$ (that is, with $\tilde{f}$ in place of $f$) is applicable. Noting from (\ref{wpp'}) and (\ref{wpp}) that
\[\Lambda_{A,\tilde{f}}=\Lambda_{A,f},\]
cf.\ (\ref{Lambda}), on account of (\ref{mm}), (\ref{wpp}), and (\ref{be}) we arrive at the theorem in question by utilizing \cite[Theorem~1.6]{Z-Oh}.

\subsection{Proof of Corollary~\ref{cor-tmass}} In view of (\ref{LLL'}), it is enough to verify that under the assumptions of Theorem~\ref{thap''},
\[\lambda_{A,f}(X)\leqslant\mu(X)\quad\text{for all $\mu\in\Lambda_{A,f}$},\]
which however follows at once from Theorem~\ref{thap''}(i$_3$) by applying \cite[Theorem~2.1]{Z-arx-22}.

\subsection{Proof of Corollary~\ref{cor-cont}} Under the hypotheses of the corollary, the assumptions of Theorem~\ref{thap''} also do hold for any $K\in\mathfrak C_A$ since, by virtue of Proposition~\ref{bal-rest},
\begin{equation*}\omega^K=(\omega^A)^K,\end{equation*}
and therefore, in consequence of (\ref{emm'}) and (\ref{req''}),
\begin{equation}\label{l???'}\omega^K(X)=(\omega^A)^K(X)\leqslant\omega^A(X)\leqslant1.\end{equation}
Applying (\ref{RRR}) to $K\in\mathfrak C_A$ we get
\begin{equation*}c_{K,f}=\frac{1-\omega^K(X)}{c(K)},
\end{equation*}
and (\ref{e-contG2'}) is thus reduced to showing that $\omega^K(X)\leqslant\omega^{K'}(X)$, where $K,K'\in\mathfrak C_A$ and $K\leqslant K'$, which is however obvious from (\ref{l???'}) applied to $K'$ in place of $A$.

\subsection{Proof of Corollary~\ref{cor-cont''}} Since $\omega^{A_1}(X)\leqslant1$, utilizing (\ref{emm'}) and (\ref{rest}) gives
\begin{equation}\label{l???''}\omega^{A_j}(X)=(\omega^{A_1})^{A_j}(X)\leqslant\omega^{A_1}(X)\leqslant1\quad\text{for all $j\in\mathbb N$},\end{equation}
so that Theorem~\ref{thap''} is applicable to each $A_j$. Thus, by virtue of (\ref{RRR}) with $A:=A_j$,
\begin{equation*}c_{A_j,f}=\frac{1-\omega^{A_j}(X)}{c_*(A_j)}.
\end{equation*}
In view of (\ref{e-contG2''}), (\ref{e-contG2'''}) is reduced to proving that $\bigl(\omega^{A_j}(X)\bigr)$ decreases,
which is however obvious from (\ref{l???''}) applied to $A_{j+1}$
and $A_j$ in place of $A_j$ and $A_1$, respectively.

\subsection{Proof of Theorem~\ref{thap'}} Relation (\ref{C1'}) was given in (\ref{c0ag}), while (\ref{C2'}) follows from (\ref{xi0}) by substituting (\ref{e-au-4}) into (\ref{mnet4}). It remains to consider the case where
\begin{equation}\label{l?}
\omega^A(X)<1.
\end{equation}
Then the assumptions of Theorem~\ref{thap''} are met, and applying (\ref{C1}) and (\ref{C2}) gives $c_{A,f}\geqslant0$. This implies (\ref{C3'}), for if not, $U^{\lambda_{A,f}}=U^\omega$ n.e.\ on $A$ (Theorem~\ref{thap''}(iii$_3$)), and therefore $\lambda_{A,f}=\omega^A$ (Theorem~\ref{th1}(i)), which however contradicts (\ref{l?}).

\subsection{Proof of Theorem~\ref{thapsupport}} Suppose that (\ref{LLL}) and $\omega^A(X)>1$ hold true; then, by virtue of the latter, $\lambda_{A,f}$ does exist (Theorem~\ref{thap}(iii$_1$)), and moreover
\begin{equation}\label{l?'}U^{\lambda_{A,f}}-U^\omega=c_{A,f}\quad\text{$\lambda_{A,f}$-a.e.\ on $X$},\end{equation}
where $c_{A,f}<0$ (see (\ref{ch-2})--(\ref{C2'})). Assume, contrary to the claim, that the support $S(\lambda_{A,f})$ is noncompact. Then, according to (\ref{l?'}), there exists a sequence $(x_j)\subset A$ approaching the Alexandroff point $\infty_X$, and such that
\[0<-c_{A,f}\leqslant-c_{A,f}+U^{\lambda_{A,f}}(x_j)=U^\omega(x_j),\]
which is however impossible because of (\ref{LLL}).

\section{Acknowledgements} This research was supported in part by a grant from the Simons Foundation, the USA (1030291, N.V.Z.).

\end{document}